\newtheorem{theorem}[equation]{Theorem}
\newtheorem{lemma}[equation]{Lemma}
\newtheorem{proposition}[equation]{Proposition}
\newtheorem{corollary}[equation]{Corollary}
\theoremstyle{definition}
\newtheorem{definition}[equation]{Definition}
\theoremstyle{remark}
\newtheorem{remark}[equation]{Remark}
\numberwithin{equation}{section}
\newcommand{\supp}{\operatorname{supp}}
\newcommand{ \R }{ \mathbb{R} }
\newcommand{ \Rn }{ {\mathbb{R}^n} }
\newcommand{\tphi}{{\tilde\phi}}
\newcommand{\diam}{\operatorname{diam}}
\newcommand{\loc}{{\operatorname{loc}}}
\renewcommand{\epsilon}{\varepsilon}
\renewcommand{\phi}{\varphi}
\renewcommand{\le}{\leqslant}
\renewcommand{\ge}{\geqslant}
\renewcommand{\leq}{\leqslant}
\renewcommand{\geq}{\geqslant}
\renewcommand{\div}{\operatorname{div}}
\newcommand{\ainc}[1]{\hyperref[ainc]{{\normalfont(aInc){\ensuremath{_{#1}}}}}}
\newcommand{\adec}[1]{\hyperref[adec]{{\normalfont(aDec){\ensuremath{_{#1}}}}}}
\newcommand{\inc}[1]{\hyperref[inc]{{\normalfont(Inc){\ensuremath{_{#1}}}}}}
\newcommand{\dec}[1]{\hyperref[dec]{{\normalfont(Dec){\ensuremath{_{#1}}}}}}
\newcommand{\azero}{\hyperref[azero]{{\normalfont(A0)}}}
\newcommand{\aone}{\hyperref[aone]{{\normalfont(A1)}}}
\newcommand{\wMA}{\hyperref[wVA1]{{\normalfont(wVA1)}}}
\newcommand{\MA}{\hyperref[VA1]{{\normalfont(VA1)}}}
\newcommand{\Phiw}{\Phi_{\rm{w}}}
\newcommand{\Phic}{\Phi_{\rm{c}}}
\begin{document}

\title{Maximal regularity for local minimizers of non-autonomous functionals}

\author{Peter H\"ast\"o}

\address{Department of Mathematics and Statistics, FI-20014 University of Turku, Finland 
and
Department of Mathematics, FI-90014 University of Oulu, Finland}
\email{peter.hasto@oulu.fi / peter.hasto@utu.fi}

\author{Jihoon Ok}
\address{Department of Mathematics, Sogang University, Seoul 04107, Republic of Korea}
\email{jihoonok@sogang.ac.kr}

\thanks{}

\subjclass[2010]{49N60; 35A15, 35B65, 35J62, 46E35}



\keywords{Maximal regularity, non-autonomous functional,
variable exponent, double phase, non-standard growth, minimizer, H\"older continuity, 
generalized Orlicz space, Musielak--Orlicz space}

\begin{abstract}
We establish local $C^{1,\alpha}$-regularity for some $\alpha\in(0,1)$ and $C^{\alpha}$-regularity for any $\alpha\in(0,1)$ of local minimizers of the functional
\[
v\ \mapsto\ \int_\Omega \phi(x,|Dv|)\,dx,
\]
where $\phi$ satisfies a $(p,q)$-growth condition.
Establishing such a regularity theory with sharp, general conditions has been 
an open problem since the 1980s. 
In contrast to previous results, we formulate the continuity 
requirement on $\phi$ in terms of a single condition for the 
map $(x,t)\mapsto \phi(x,t)$, rather than separately in the 
$x$- and $t$-directions. 
Thus we can obtain regularity results for functionals without assuming that the gap $\frac qp$
between the upper and lower growth bounds is close to $1$.
Moreover, for $\phi(x,t)$ with particular structure, including $p$-, Orlicz-, 
$p(x)$- and double phase-growth, our single condition implies known, essentially optimal, regularity conditions. Hence, we handle regularity theory for the above functional in a universal way. 
\end{abstract}

\maketitle


\section{Introduction}\label{sect:intro}

The calculus of variations is a classical and still active topic in mathematics which is connected not only to other mathematical fields 
(partial differential equations, geometry, \dots)\ and but also to applications 
(physics, engineering, economy, \dots).
Research on regularity of minimizers of the functional
\[
v\ \ \mapsto \ \ \mathcal F(v,\Omega):=\int_\Omega F(x,Dv)\,dx
\]
has been a major topic in calculus of variations and PDEs. 
If $F$ depends only on the gradient, i.e.\ $F(x,z)\equiv F(z)$, $\mathcal F$ 
is called an \textit{autonomous} functional. 
The simplest non-linear model case is the $p$-power function
\[
F(z)=|z|^p,\quad 1<p<\infty.
\]
The corresponding Euler-Lagrange equation is the $p$-Laplace equation
$\div(|Du|^{p-2}Du)=0$, 
and the maximal regularity of weak solutions of $p$-Laplace equations is $C^{1,\alpha}$ for some $\alpha\in(0,1)$ depending only on $p$ and the dimension $n$. 
We refer to \cite{AF89,DiBe1,Eva3,Le1,Man86,To84,Uhl77,Ura68,Wa93,Wa94} for 
classical results on $C^{1,\alpha}$-regularity for equations and systems of $p$-Laplacian type.

On the other hand, if $F$ depends on both the space variable and the gradient, 
$\mathcal F$ is called a \textit{non-autonomous} 
functional, and this has been a central topic in contemporary regularity theory. 
The main approach to such minimization problems is due to 
Giaquinta and Giusti \cite{GiaG83,GiaG84}. It is based on the 
following $p$-type growth conditions:
\[
\begin{cases}
z\mapsto F(x,z)\text{ is } C^2, \\
\nu |z|^p \le F(x,z)\le L(1+|z|^p), \\
\nu (\mu^2+|z|)^{\frac{p-2}{2}}|\lambda|^2 
\le F_{zz}(x,z)\lambda\cdot \lambda 
\le L (\mu^2+|z|^2)^{\frac{p-2}{2}} |\lambda|^2, \\
|F(x,z)-F(y,z)| \le \omega(|x-y|)(1+|z|^p). 
\end{cases} 
\]
This essentially corresponds to the perturbed case $a(x)|z|^p$
with the same $p$-type growth assumed at all points. 
Lieberman \cite{Lie1} extended this to the case where $|z|^p$ is replaced 
by $\phi(|z|)$. 
However, such structure conditions fail to accommodate many kinds of energy functionals 
since the variability in the $x$- and $z$-directions are treated separately. 

The need to treat the $x$- and $z$-directions separately 
leads Mingione to conclude in his influential survey that 
``regularity results should be chased [in more general cases] by looking at 
special classes of functionals and
thinking of relevant model examples, thereby limiting the degree of generality one
wants to achieve'' \cite[p.~405]{Min06}.
In this spirit, the most significant non-autonomous functionals in the literature 
have so-called Uhlenbeck structure, i.e.\ $F$ depends on $t:=|z|$ instead of $z$, 
\[
F(x,z)=\phi(x,|z|)=\phi(x,t),
\]
and are the following: 
\begin{enumerate}
\item[I.]
Perturbed Orlicz: $a(x) \psi(t)$, where $0<\nu\leq a(\cdot)\leq L$ and 
$\psi'(t)\approx t \psi''(t)$.
\item[II.]
Variable exponent: $t^{p(x)}$, where $1<p^-\leq p(\cdot) \leq p^+<\infty$.
\item[III.]
Double phase: $t^p + a(x) t^q$, where $1<p\leq q$ and $a(\cdot)\geq0$. 
\end{enumerate}
These models were first studied by Zhikov \cite{Zhi86, Zhi95} in the 1980's in relation to 
Lavrentiev's phenomenon and have been considered in hundreds of papers since \cite{Min06, Rad15}.
In keeping with Mingione's thesis, regularity results for these cases have been 
established in independent, idiosyncratic ways (cf. Section~\ref{sect:literature}). 
Moreover, various variants and borderline cases have been investigated, such as:
\begin{itemize}
\item[IV.]
Perturbed variable exponent: $t^{p(x)} \log(e+t)$, e.g.\ \cite{GP13,LiaSZ18,Ok0,Ok2}.
\item[V.]
Orlicz variable exponent: $[\psi(t)]^{p(x)}$ or $\psi(t^{p(x)})$, e.g.\ \cite{CapCF18, GPRT17}.
\item[VI.]
Degenerate double phase: $t^p + a(x) t^p \log (e+t)$, e.g.\ \cite{BCM2,BOh2}.
\item[VII.]
Orlicz double phase: $\psi(t) + a(x) \xi(t)$, e.g.\ \cite{BOh3}.
\item[VIII.]
Triple phase: $t^p + a(x) t^q + b(x) t^r$, e.g.\ \cite{DeFOh1, FanZZ_pp}. 
\item[IX.]
Double variable exponent: $t^{p(x)}+t^{q(x)}$, e.g.\ \cite{CenRR18, RadRSZ_pp,ZhaR18}.
\item[X.]
Variable exponent double phase $t^{p(x)} + a(x) t^{q(x)}$, e.g.\ \cite{MaeMOS_pp, RagT20}. 
\end{itemize}

In this paper, we establish a general regularity theory for non-autonomous functionals
with Uhlenbeck structure based on a single condition involving both the $x$- and $t$-directions. 
Specifically, we prove 
maximal local regularity properties, i.e.\ $C^{1,\alpha}$-regularity for 
some $\alpha\in(0,1)$ and $C^{\alpha}$-regularity for any $\alpha\in (0,1)$.
We consider a convex function $\phi:\Omega\times[0,\infty)\to[0,\infty)$ 
 satisfying the following ``vanishing A1''  variant of 
\aone{} (see Definitions~\ref{defPhi} and \ref{Def41}, below):
\begin{itemize}
\item[\normalfont{(VA1)}] 
There exists a non-decreasing continuous function 
$\omega:[0,\infty)\to[0,1]$ with $\omega(0)=0$ 
such that for any small ball $B_r\Subset \Omega$,
\[
\phi^+_{B_r}(t)\leq (1+\omega(r))\phi^-_{B_r}(t)\quad\text{for all }\ t>0\ \ 
\text{satisfying }\ \phi^-_{B_r}(t)\in[\omega(r),|B_r|^{-1}],
\]
\end{itemize}
where $\phi^+_{B_r}(t)$ and $\phi^-_{B_r}(t)$ are the supremum and infimum 
of $\phi(\cdot,t)$ in $B_r$, respectively.  
Let us point out that \MA{} is optimal for Theorem~\ref{mainthm1-0} in the following sense: 
For any $\theta<1$ assume that \MA{} is replaced by 
\[
\phi^+_{B_r}(t)\leq (1+\omega(r))\phi^-_{B_r}(t)\quad\text{for all }\ t>0\ \ 
\text{satisfying }\ \phi^-_{B_r}(t)\in[\omega(r),|B_r|^{-\theta}].
\]
Then the conclusions of the theorem do not hold, as is shown by examples in \cite{Min06} 
already in the double phase case (cf.\ Corollary~\ref{corexdouble2}), see also 
\cite{BalDS_pp, BenHHK_pp}.  Furthermore, $1+\omega(r)$ in the inequality from \MA{} ensures the continuity of 
the function, which is necessary already in the perturbed linear case 
(cf. Corollary~\ref{cor:PerturbedCase} and Remarks~\ref{rmk:PerturbedCase1} and \ref{rmk:PerturbedCase2}).

\begin{theorem}\label{mainthm1-0}
Let $\phi\in \Phiw(\Omega)$, $\phi(x,\cdot)\in C^1([0,\infty))$ for every $x\in\Omega$ 
with $\partial_t\phi$ satisfying \azero{}, \inc{p-1} and \dec{q-1} for some $1<p\leq q$ and let $u\in W^{1,\phi}_{\loc}(\Omega)$ be a local minimizer of the 
$\phi$-energy
\begin{equation}\label {mainfunctional}
\int_\Omega \phi(x,|\nabla u|)\, dx.
\end{equation}
\begin{enumerate}
\item
If $\phi$ satisfies \MA, then $u\in C^{\alpha}_{\loc}(\Omega)$ for any $\alpha\in(0,1)$.
\item
If $\phi$ satisfies \MA{} and $\omega(r)\le c r^\beta$ for some $c,\beta>0$, 
then $u\in C^{1,\alpha}_{\loc}(\Omega)$ for some $\alpha\in(0,1)$. Here $\alpha$ depends only on $n,p,q,L$ and $\beta$, where $L\geq1$ is from \azero. 
\end{enumerate}
\end{theorem}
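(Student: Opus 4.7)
The plan is to combine the method of $\bar\phi$-harmonic approximation with a three-way splitting argument that exploits \MA{} precisely in the range where it provides pointwise control, while handling the complementary ranges by direct smallness and by a higher-integrability buffer. The novelty compared to the classical schemes is that the $x$- and $t$-oscillations of $\phi$ are simultaneously controlled by the single condition \MA{}.

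Fix a small ball $B_r \Subset \Omega$ and freeze the growth by setting $\bar\phi(t) := \phi^-_{B_r}(t)$; this $\bar\phi$ is an autonomous $N$-function that inherits \azero{}, \inc{p}, \dec{q} uniformly in $r$. Let $v \in u + W^{1,\bar\phi}_0(B_r)$ be the $\bar\phi$-harmonic replacement of $u$ on $B_r$. Since the frozen problem is autonomous with Uhlenbeck structure, Lieberman's theory provides interior $C^{1,\alpha_0}$ regularity of $v$ together with a uniform excess decay
\[
\frac{1}{|B_\rho|} \int_{B_\rho} |\nabla v - (\nabla v)_{B_\rho}|^p\, dx \le C\bigl(\tfrac{\rho}{r}\bigr)^{\alpha_0 p} \frac{1}{|B_r|} \int_{B_r} |\nabla v|^p\, dx,
\]
with $\alpha_0, C$ depending only on $n, p, q, L$. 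In parallel, a Caccioppoli inequality for $u$ combined with Gehring's lemma upgrades the integrability of $\phi(x,|\nabla u|)$ to $L^{1+\delta}_{\loc}$, which will be essential for the comparison below.

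The crux of the proof, and what I expect to be the main technical obstacle, is the comparison estimate
\[
\frac{1}{|B_r|} \int_{B_r} \bar\phi(|\nabla u - \nabla v|)\, dx \le \Psi(\omega(r)) \biggl[ \frac{1}{|B_r|} \int_{B_r} \phi(x,|\nabla u|)\, dx + 1 \biggr],
\]
with $\Psi(\omega(r)) \to 0$ as $r \to 0^+$. Subtracting the Euler--Lagrange equations for $u$ and $v$ and testing with $u - v$, Uhlenbeck-type monotonicity (a consequence of \inc{p-1} and \dec{q-1} on $\partial_t\phi$) reduces the task to estimating $|\phi(x,|\nabla u|) - \bar\phi(|\nabla u|)|$ in a suitable integral sense. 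I split $B_r$ by the size of $\bar\phi(|\nabla u|)$: on the low set $\{\bar\phi(|\nabla u|) < \omega(r)\}$ the contribution is trivially $O(\omega(r))$; on the good set $\{\omega(r) \le \bar\phi(|\nabla u|) \le |B_r|^{-1}\}$, condition \MA{} yields a pointwise defect of order $\omega(r)\bar\phi(|\nabla u|)$; on the high set $\{\bar\phi(|\nabla u|) > |B_r|^{-1}\}$, Chebyshev combined with the higher integrability forces its measure to vanish faster than $|B_r|$, and Hölder's inequality in $L^{1+\delta}$ closes the estimate.

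Combining the decay for $v$ with this comparison gives an excess-decay estimate for $u$ of the form
\[
\frac{1}{|B_\rho|} \int_{B_\rho} |\nabla u - (\nabla u)_{B_\rho}|^p\, dx \le C\bigl[(\tfrac{\rho}{r})^{\alpha_0 p} + \Psi(\omega(r))\bigr] \Bigl(\frac{1}{|B_r|} \int_{B_r} |\nabla u|^p\, dx + 1\Bigr).
\]
For part~(1), iterating on a geometric sequence of radii and using only that $\Psi(\omega(r)) \to 0$ produces a Morrey-type bound $\int_{B_\rho} \phi(x,|\nabla u|)\, dx \le C\rho^{n-\epsilon}$ for every $\epsilon > 0$; Morrey's embedding then yields $u \in C^{\alpha}_{\loc}(\Omega)$ for every $\alpha \in (0,1)$. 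For part~(2), the power rate $\omega(r) \le c r^\beta$ converts the smallness into geometric decay, and a standard Campanato iteration for $\nabla u$ gives $u \in C^{1,\alpha}_{\loc}(\Omega)$ with $\alpha \in (0,1)$ depending on $n, p, q, L, \beta$ and the Lieberman exponent $\alpha_0$.
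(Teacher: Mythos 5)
Your overall architecture (freezing, harmonic replacement, a three-way splitting of $B_r$ by the size of the frozen energy, higher integrability to absorb the top range, then excess decay plus iteration) is the same as the paper's. The proof nevertheless has a genuine gap at its core: the choice $\bar\phi:=\phi^-_{B_r}$ as the frozen function does not work, and this is exactly the point the paper singles out as the main difficulty. First, $\phi^-_{B_r}$ is an infimum of convex functions and is in general neither convex nor $C^1$: already for $\phi(x,t)=t^{p(x)}$ one gets $t^{p^+}$ for $t\le 1$ and $t^{p^-}$ for $t\ge 1$, whose derivative jumps downward at $t=1$. So the autonomous $C^{1,\alpha_0}$ theory and the excess-decay estimate you quote for $v$ (Lemma~\ref{lemHolder}, which needs $\bar\phi\in C^1\cap C^2((0,\infty))$ with $t\bar\phi''\approx\bar\phi'$) are simply not available for this $\bar\phi$. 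Second, and more seriously, you never verify that the replacement $v$ is admissible for the original functional, i.e.\ that $\int_{B_r}\phi(x,|\nabla v|)\,dx$ is finite and quantitatively controlled; this is indispensable both for invoking the minimality of $u$ and for estimating the error term in $\nabla v$ that inevitably appears in the comparison. The paper gets this from a Calder\'on--Zygmund estimate in the generalized Orlicz space generated by $\theta(x,t)=[\phi(x,\tphi^{-1}(t))]^{1+\sigma}$, which requires $\phi(x,\tphi^{-1}(\cdot))$ to satisfy \ainc{1}, \adec{q/p} and \aone{} (Proposition~\ref{prop:thetaOK}); with $\tphi=\phi^-_{B_r}$ the property \ainc{1} fails in general because the infimum can be attained at different points for different $t$. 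The fix is the paper's construction in Section~\ref{sect:comparison}: take $\phi'(x_0,\cdot)$ on the window $[(\phi^-)^{-1}(\omega(2r)),(\phi^-)^{-1}(|B|^{-1})]$, extend by pure $(p-1)$-power growth outside it, integrate, and mollify.

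There is also a problem in the comparison step itself. You propose to ``subtract the Euler--Lagrange equations for $u$ and $v$ and test with $u-v$''; after the monotonicity step this leaves an error of the form $|\phi'(x,t)-\bar\phi'(t)|$, i.e.\ a closeness condition on the \emph{derivatives}, not the quantity $|\phi(x,|\nabla u|)-\bar\phi(|\nabla u|)|$ you claim the task reduces to. \MA{} controls $\phi$, not $\partial_t\phi$, and Remark~\ref{rem:pdeCase} points out that the two conditions are not comparable. The correct route (Lemma~\ref{lemcom}) is to test only the equation for $v$ with $u-v$, which kills the cross term, then insert and remove $\phi(x,\cdot)$ and use the minimality of $u$ to discard $\fint_{B_r}[\phi(x,|\nabla u|)-\phi(x,|\nabla v|)]\,dx\le 0$; what remains are the two error integrals $\fint[\tphi(|\nabla u|)-\phi(x,|\nabla u|)]$ and $\fint[\phi(x,|\nabla v|)-\tphi(|\nabla v|)]$, the second of which again requires the Calder\'on--Zygmund bound on $\phi(x,|\nabla v|)$. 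Your splitting by the size of the frozen energy and the use of higher integrability on the top set are correct in spirit, but without the regularized $\tphi$ and the admissibility estimate for $v$ the argument does not close.
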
 

\begin{remark}\label{rmk:PerturbedCase1}
In this paper, we consider $\varphi(x,t)$ continuous in $x$. It is clear that we cannot remove the 
assumption $\lim_{r\to0} \omega(r)=0$ from \MA{} and still obtain $C^\alpha$-regularity 
for all $\alpha\in(0,1)$. However, continuity is not strictly speaking necessary, as it is known 
for $\phi(x,t)=a(x)\psi(t)$ with $a$ locally $\mathrm{VMO}$ (vanishing mean oscillation), that the corresponding minimizer 
is in $C^{\alpha}_{\mathrm{loc}}$ for any $\alpha\in(0,1)$, in fact, in $W^{1,p}_{\mathrm{loc}}$ for any $p>1$.
It seems that for this result the special multiplicative structure is important.
\end{remark}

\begin{remark}\label{rmk:PerturbedCase2}
If we consider solutions of the general linear elliptic equation $\mathrm{div}(A(x)Du)=0$,
where $A(x)$ is a bounded and uniformly elliptic $n\times n$ matrix, 
then the continuity of $A$ does not imply that the function is Lipschitz or its derivative is continuous 
\cite[Propositions~1.5 and 1.6]{JinMS09}. Therefore, we cannot expect to remove the 
assumption $\omega(r)\le cr^\beta$ from \MA{} and still obtain $C^{1,\alpha}$-regularity. 
\end{remark}

We shall introduce notation, assumptions and properties of generalized $\Phi$-functions and 
related spaces later in Section~\ref{sect:preliminary}. Recall that \textit{local minimizer} means that $u$ satisfies 
\[
\int_{\Omega'} \phi(x,|\nabla u|)\, dx \leq \int_{\Omega'} \phi(x,|\nabla v|)\, dx
\]
for every $v\in W^{1,\phi}(\Omega')$ with $u-v\in W^{1,\phi}_0(\Omega')$ and $\Omega'\Subset \Omega$. 

In fact, we will generalize \MA{} to a weaker version, \wMA{}, which covers not only 
\MA{} but its borderline cases (see Remark~\ref{rmkMA1}) as well as the PDE case 
(see Remark~\ref{rem:pdeCase}), and under this condition we will prove 
$C^{\alpha}$- and $C^{1,\alpha}$-regularity, see Theorems~\ref{mainthm1} 
and \ref{mainthm2}. As far as we know, 
these theorems cover all previously known results (and several new ones) of 
$C^{\alpha}$- or $C^{1,\alpha}$-regularity for the functionals I--X (see Section \ref{sect:examples}) 
with the exception of VMO coefficients (Remark~\ref{rmk:PerturbedCase1}).

Even in the case of autonomous functionals (i.e.\ $\phi(x,t)\equiv \phi(t)$), our 
results provide slight extensions to the state-of-the-art. Up to now, maximal
regularity for autonomous functionals has been established assuming 
$\phi\in C^1([0,\infty))\cap C^2((0,\infty))$. However, in this paper we only assume $\phi\in C^1([0,\infty))$, that is, we do not assume that $\phi$ is twice differentiable. For instance, $\phi(t):= \int_0^t \min\{s,s^2\}\, ds$ (cf.\ \cite{AzzAP14}) is covered by our result but 
is not $C^2$.

\medskip

Let us conclude the introduction by outlining the approach of the paper and pointing out the main difficulties and innovations. 

The first difficulty for a reasonable regularity theory is to find a well-designed condition for general $\phi$. The regularity conditions on $\phi$ for the 
types I--III seem unconnected to one another, since in these cases, the behaviors of $\phi$ with respect to $x$ and $t$ can be investigated separately. 
Recently, on the other hand,
the $C^{\alpha}$-continuity with some small $\alpha>0$ for (quasi-)minimizers of the general non-autonomous functional has been established under the 
so-called \aone{} condition \cite{BenHHK_pp,HarHL_pp18, HarHT17}:
\begin{equation}\label{a1}
\phi^+_{B_r}(t)\leq L\phi^-_{B_r}(t)\quad\text{for all }\ t>0\ \ 
\text{satisfying }\ \phi^-_{B_r}(t)\in[1,|B_r|^{-1}].
\end{equation}
From this, it is natural to require $L\to 1$ as $r\to 0$ for higher regularity. 
Additionally, small values $t\le 1$ were previously lumped into an additive 
constant using decay at infinity. A more precise estimate, on the other hand, 
requires the previous condition to be extended from $[1,|B_r|^{-1}]$ 
to $[\omega(r),|B_r|^{-1}]$. 

The main difficulty is to find a suitably regular auxiliary autonomous function $\tphi(t)$ for the perturbation technique in which one approximates the minimizer with the solution to a related but simpler minimization problem.
In order for the perturbation argument to work under the assumption  \MA{}, the autonomous function $\tphi(t)$ should satisfy the following requirements:
\begin{itemize}
\item[(1)] $\tphi\in C^1([0,\infty))\cap C^2((0,\infty))$ and $t\tphi''(t)\approx \tphi'(t)$.
\item[(2)] For a given $B_r$ with small $r\in(0,1)$, $\tphi(t)$ is sufficiently close in some sense to $\phi(x,t)$ for all $(x,t)\in B_r\times [t_1,t_2]$,  where $t_1:=(\phi^-_ {B_r})^{-1}(\omega(r))$ and $t_2:=(\phi^-_ {B_r})^{-1}(|B_r|^{-1})$.
\item[(3)] $\theta_0(x,t):=\phi(x,\tphi^{-1}(t))$ satisfies \azero{}, \ainc{1}, \adec{q/p} and \aone{}. 
\end{itemize}
The construction of such $\tphi$ is quite nontrivial, since the property (3) is not satisfied in general for either 
$\tphi(t)=\phi(y,t)$ with any choice of $y\in \overline{B_r}$
or $\tphi(t)=\phi_{B_r}^-(t)$ (the expected choices based on previous research). 
Note that for type II (variable exponent) or type III (double phase),
one can simply take $\tphi(t)=t^{p_r}$ or $\tphi(t)=t^p+a_r t^q$, where 
$p_r:=\inf_{B_r}p(\cdot)$ and $a_r:=\inf_{B_r}a(\cdot)$, 
so this provides no guidance for the general case: 
 in these special cases $t\mapsto \phi(x,\tphi^{-1}(t))$ satisfies \ainc{1} since a single 
point captures the slowest growth for all values of $t$, whereas in general the slowest growth 
may occur at different locations for different $t$.

The requirements (1)--(3) above are crucially used in our comparison step. Let $v$ be a minimizer of an autonomous functional with $\tphi$-energy in $B_r$ satisfying $v=u$ on $\partial B_r$. Then by (1) and known regularity results for Orlicz growth, we obtain that $v$ is locally $C^{1,\alpha_0}$ for some $\alpha_0\in(0,1)$ (Lemma~\ref{lemHolder}). 
Moreover, from (3) we can deduce a global nonlinear Calder\'on--Zygmund type estimate 
in the generalized Orlicz space 
$L^{\theta}$ with $\theta=\theta_0^{1+\sigma_0}$ for some $\sigma_0>0$
(Lemma~\ref{lemCZ}), which implies that $Dv\in L^{\phi}(B_r)$ and so, with this $v$, we can use the minimizing property 
of $u$. 
Note that this approach is new even for the double phase problem, type III.

The Calder\'on--Zygmund type estimates (Lemma~\ref{lemCZ}) in generalized Orlicz space $L^\theta$ 
for the norm will be obtained by an extrapolation argument \cite{CruHas18}
and in this process \aone{} of $\theta$ suffices. However, we need a mean integral version of 
Calder\'on--Zygmund type estimate that is stable under the size of underlying domain and here \aone{} of $\theta$ is not enough. 
We overcome this problem by replacing $\theta(x,t)$ with $\theta(x,t)+t^{p_1}$ for suitable $p_1>1$ along with 
delicate analysis. Note that $\theta(x,t)+t^{p_1}$ satisfies a stronger assumption than \aone{}. As a consequence, 
there is ``$+1$'' in the mean integral version of estimate \eqref{meanCZestimate}. 

We construct our approximation $\tphi$ and derive the comparison estimate for $\phi$ and $\tphi$ in Section~\ref{sect:comparison}. In Proposition~\ref{prop:thetaOK} 
we show that our approximation satisfies the assumptions in (3), above, and in this 
step a new framework for generalized Orlicz spaces from \cite{HarH_book} is rather crucial. 
Then a comparison argument along with (2) and a higher integrability result for 
$Du$ yield that $Du$ is sufficiently close to $Dv$ in the mean oscillation sense
(Corollary~\ref{corcom}). 

We present proofs of some regularity results for autonomous problems in Appendices~\ref{app:a} 
and \ref{app:b}. We start this article with an overview of regularity 
theory in the $(p,q)$-growth case (Section~\ref{sect:literature}) and with 
notation and background (Section~\ref{sect:preliminary}).
 
\begin{remark}
Constructing a suitable $\tphi$ is the main 
problem also in extending this approach to the case without Uhlenbeck structure, i.e.\ 
energy functionals depending on the derivative $Du$, not just its norm. 
Namely, an approximation $\tphi:\Omega\times \Rn\to \R$ affords us much less 
room to operate in than $\tphi:\Omega\times [0,\infty) \to \R$. Indeed, it is not 
even clear how to state the appropriate assumptions in this case. In addition, 
the main tools from \cite{HarH_book} concern only the isotropic case 
$\phi(x,|Du|)$. Therefore, the regularity of the anisotropic minimization 
problem $\int \phi(x,Du)\, dx$ remains a question for future research.
\end{remark}

\begin{remark} 
The vectorial case, i.e. $u:\Omega\to \R^N$ with $N>1$, is also an interesting issue. 
The main difficulty in this case is the following: in order that the local minimizer of the regular autonomous functional 
with Orlicz function $\tphi=\tphi(t)$ have $C^{1,\alpha}$-regularity $\tphi$ should apparently 
satisfy not only $t\tphi''(t)\approx \tphi'(t)$ but also a H\"older type vanishing condition on $\tphi''$, see 
\cite[Assumption~2.2]{DieSV17}. It is unclear whether \MA{} or some modification implies the additional condition of 
$\tphi$. This is also a future research topic.    
\end{remark}


\section{Overview of regularity for \texorpdfstring{$(p,q)$}{(p,q)}-growth and special cases}
\label{sect:literature}

An alternative extension to the approach of Giaquinta and Giusti 
is to consider different upper and lower growth rates, and 
replace the exponent on the right-hand side by $q>p$. This leads to so-called 
$(p,q)$-growth functionals, for instance with assumptions 
\[
\begin{cases}
z\mapsto F(x,z)\text{ is } C^2, \\
\nu |z|^p \le F(x,z)\le L(1+|z|^q), \\
\nu (1+|z|)^{\frac{p-2}{2}}|\lambda|^2 
\le F_{zz}(x,z)\lambda\cdot \lambda 
\le L (1+|z|^2)^{\frac{q-2}{2}} |\lambda|^2, \\
|F(x,z)-F(y,z)| \le \omega(|x-y|) (1+|z|^q). 
\end{cases} 
\]
This case was introduced and systematically studied by Marcellini 
\cite{Mar89, Mar91, Mar93, Mar96a, Mar96b}. Several other researchers also 
contributed to the theory, cf.\ \cite{BecM_pp,EleMM_pp,Min06}. 
For instance, Marcellini \cite{Mar91} started by showing that that every minimizer in 
$W^{1,q}_\loc(\Omega)$ has locally bounded gradient provided $2\le p\le q$ and 
\[
\frac qp \le 1 + \frac 2{n-2}, \quad\text{when } n>2;
\] 
(the proof uses PDE techniques and entails several additional assumptions, 
which are not presented here; see also a recent improvement in \cite{BelS_pp}). 
Note, however, that $W^{1,q}_\loc(\Omega)$ is already higher integrability, 
so this is not a natural assumption in this context and was addressed in 
\cite[Section~3]{Mar91}. Later, 
Esposito, Leonetti and Mingione \cite{EspLM04} showed that every 
minimizer in $W^{1,p}_\loc(\Omega)$ also belongs to $W^{1,q}_\loc(\Omega)$, 
but only when 
\[
\frac qp \le 1+ \frac\beta n
\quad\text{for}\quad \omega\in C^\beta.
\]
Furthermore, they provide an example showing that if the latter condition does not 
hold, then a minimizer in $W^{1,p}_\loc(\Omega)$ need not belong to 
$W^{1,q}_\loc(\Omega)$ so the Lavrentiev phenomenon occurs. 

It seems that $(p,q)$-growth is the most general class of 
non-autonomous functionals in the calculus of variations. 
Regularity theory, including $C^{\alpha}$- and $C^{1,\alpha}$-regularity, 
in this general class is not easily obtained from classical regularity theory for functionals with standard $p$-growth, see for instance \cite{Min06}.
Furthermore, there are no general results in the $(p,q)$-case 
which cover the special cases I--X, so in that sense the theory is incomplete. 
We note that some recent papers 
\cite{BenHHK_pp,ChlGZ18, ChlGZ_pp19, HarHL_pp18, HarHT17, WanLZ19}
deal with calculus of variation in generalized Orlicz spaces, but these papers 
do not cover higher regularity.

Indeed, the $C^{\alpha}$- and $C^{1,\alpha}$-regularity theories for type I--III functionals 
have been proved in independent ways. For I, $\phi$ is nothing but an autonomous functional with coefficient, and so regularity results can be obtained by using a standard perturbation argument. On the other hand, II and III are quite different from I, since 
they are potentially non-uniformly elliptic problems. Formally, we can rewrite the energy functions as
$$
\text{II: } |Du|^{p(x)-p^-} |Du|^{p^-} 
\quad\text{and}\quad 
\text{III: } (1+a(x)|Du|^{q-p}) |Du|^{p}.
$$
Here, $|Du|^{p(x)-p^-}$ and $1+a(x)|Du|^{q-p}$ blow up or vanish 
when $|Du|$ does. Therefore, by identifying $a(x)$ in I with $|Du|^{p(x)-p^-}$ or $1+a(x)|Du|^{q-p}$, we see that $a$ is neither bounded nor far away from the zero. Let us briefly introduce regularity results for the above types. Let $u$ be a minimizer of the $\phi$-energy 
\eqref{mainfunctional} with $\phi$ being one of I--III. Then the following is known:

For type I, i.e.\ $\phi(x,t)=a(x)\psi(t)$, 
suppose $a$ is continuous with modulus of continuity $\omega_a$. Then 
\begin{equation}\label{(1)alpha}
\begin{split}
\lim_{r\to 0^+}\omega_a(r)=0
\quad&\Longrightarrow\quad 
u\in C^{\alpha}\ \text{ for any }\ \alpha\in(0,1), \\
\omega_a(r)\lesssim r^\beta \ \text{for some }\ \beta>0
\quad&\Longrightarrow\quad 
u\in C^{1,\alpha}\ \text{ for some }\ \alpha\in(0,1),
\end{split}
\end{equation}
see for instance \cite{Min06} and references therein.

For type II, i.e.\ $\phi(x,t)=t^{p(x)}$, 
suppose $p$ is continuous with modulus of continuity $\omega_p$. Then
\begin{equation}\label{(2)alpha}
\begin{split}
\lim_{r\to 0^+}\omega_p(r)\ln \tfrac 1r=0
\quad&\Longrightarrow\quad 
u\in C^{\alpha}\ \text{ for any }\ \alpha\in(0,1), \\
\omega_p(r)\lesssim r^\beta \ \text{for some }\ \beta>0
\quad&\Longrightarrow\quad 
u\in C^{1,\alpha}\ \text{ for some }\ \alpha\in(0,1).
\end{split}
\end{equation}
For these results, we refer to the series of papers of Acerbi, Coscia and Mingione \cite{AM01,AM02,CosM99}, see also \cite{AM05,BO1,Fan07,FanZha99}.

For type III, i.e.\ $\phi(x,t)=t^p + a(x)t^q$,
suppose $a\in C^{0,\beta}$ for some $\beta\in(0,1]$. Then
\begin{equation}\label{(3)<=}
\frac qp \leq 1+\frac{\beta}{n}
\quad\Longrightarrow\quad 
u\in C^{1,\alpha}\ \text{ for some }\ \alpha\in(0,1).
\end{equation}
For this result, we refer to the series of papers of Baroni, Colombo and Mingione \cite{BCM3,ColM15-1}, see also \cite{BCM1,BOh1,ColM15-2,ColM16,Ok1}. Note that no independent condition implies $C^\alpha$-regularity. 
In other words, we cannot ensure even $C^\alpha$-regularity for $u$ if 
$\frac qp>1+\frac{\beta}{n}$. We also mention that the $C^{1,\alpha}$-regularity for type III was first proved under the following condition instead of \eqref{(3)<=}:
\begin{equation}\label{(3)<}
\frac qp < 1+\frac{\beta}{n}
\quad\Longrightarrow\quad 
u\in C^{1,\alpha}\ \text{ for some }\ \alpha\in(0,1),\ \ \ \text{see \cite{ColM15-1}},
\end{equation}
and later it was extended to the borderline case $\frac qp=1+\frac{\beta}n$ in \cite{BCM3}, 
see also \cite{DeFM_pp}.

As mentioned in the introduction, our general results cover all of these special cases. 
Specifically, Theorem~\ref{mainthm1-0}(1) implies \eqref{(1)alpha}$_1$ 
and \eqref{(2)alpha}$_1$ and Theorem~\ref{mainthm1-0}(2) implies \eqref{(1)alpha}$_2$, 
\eqref{(2)alpha}$_2$ and \eqref{(3)<}. We notice that Theorem~\ref{mainthm1-0}(2) 
does not imply \eqref{(3)<=}. In fact, \MA{} holds when $\phi(x,t)=t^p+a(x)t^q$ 
with $a(\cdot)\in C^{0,\beta}$ if and only if the strict inequality $\frac qp<1+\frac{\beta }n$ holds. 
This gap will be filled by Theorem~\ref{mainthm2}; this is one main reason 
why we consider the slightly weaker assumption \wMA{}. 

Furthermore, many other, previously unstudied cases can also be covered, cf., e.g.\ 
Corollary~\ref{cor:radulescu}, and Section~\ref{sect:examples} more generally. 
Originally, the double phase model was introduced to model the situation when 
two phases (the $p$ and the $q$-growth phases) mix. Since only the larger 
exponent affects the nature of the problem, this was simplified in the form 
$t^p + a(x) t^q$ that we have seen. However, we can also consider a variant which is more 
closely related to the original motivation:
\begin{equation}\label{gdoublephase}
\phi(x,t)=(1-a(x)) t^p + a(x) t^q,
\quad\text{where }\ 1<p\leq q,\ \ a(\cdot):\Omega \to [0,1].
\end{equation}
Now $a$ indicates the relative amount of material at a point from the $q$-phase. Such functionals have been treated by Eleuteri--Marcellini--Mascolo \cite{EleMM16a, EleMM16b, EleMM_pp}. 
More generally, we can also deal with general double phase problems of the type
\[
\phi(x,t)=a(x)\psi(t)+b(x)\xi(t),
\]
where $a(\cdot),b(\cdot)\ge 0$ satisfy $\nu\le a(\cdot)+b(\cdot)\le L$ and $\psi',\xi'$ satisfy \azero{}, \inc{p-1} and \dec{q-1}, which includes the following examples:
\[
t^p+a(x)t^q,\quad a(x)t^p+t^q, \quad a(x)t^p+b(x)t^q, 
\quad\text{and}\quad \psi(t)+a(x)\psi(t)\ln(e+t).
\] 
We present conditions for above functions to satisfy \wMA{} or \MA{} 
in Corollaries~\ref{corexdouble1} and \ref{corexdouble2},
so that $C^{\alpha}$- and $C^{1,\alpha}$-regularity results for \eqref{gdoublephase} are obtained as special cases. We note that the second example $a(x)t^p+t^q$ can be understood as a functional with standard $q$-growth and hence $q/p$ has no upper bound to obtain the regularity results. Here, we explain the regularity results for this functional as a special case of double phase problems.
In addition, in the same spirit, one could consider functionals with infinitely many phases such that 
\[
\phi(x,t)= \sum_{i=1}^\infty a_i(x)t^{p_i},\quad \text{where }\ 1<p\le p_i\le q,\ \ a_i(\cdot)\geq 0\ \text{ and }\ 0<\nu\leq \sum_{i=1}^\infty a_i(\cdot)\leq L,
\]
which satisfies the fundamental assumption of  Theorem~\ref{mainthm1-0}.


\section{Generalized Orlicz spaces}\label{sect:preliminary}


\subsection*{Notation and assumptions}
For $x_0\in \Rn$ and $r>0$, $B_r(x_0)$ is the ball in $\Rn$ with radius $r$ and 
center $x_0$. We write $B_r=B_r(x_0)$ when the center is clear or unimportant. 
For an integrable function $f$ in $U\subset \Rn$, we define $(f)_{U}$ by the average of 
$f$ in $U$ in the integral sense, that is,
$(f)_{U} := \fint_U f\, dx := \frac{1}{|U|}\int_U f \,dx$. 
We say that $f:[0,\infty)\to [0,\infty)$ is \textit{almost increasing} or 
\textit{almost decreasing} if there exists $L\ge 1$ such that for any $0<t<s<\infty$, $f(t)\leq Lf(s)$ or $f(s)\leq Lf(t)$, respectively. In particular, if $L=1$ we say $f$ is \textit{non-decreasing} or \textit{non-increasing}.

We refer to \cite{HarH_book} for more details about basics of $\Phi$-functions and 
generalized Orlicz spaces. 
For $\phi:\Omega\times[0,\infty)\to[0,\infty)$ and $B_r\subset \Omega$, we write
$$
\phi^+_{B_r}(t):=\sup_{x\in B_r}\phi(x,t)\quad \text{and}\quad\phi^-_{B_r}(t):=\inf_{x\in B_r}\phi(x,t).
$$
If the map $t\mapsto\phi(x,t)$ is non-decreasing for every $x\in\Omega$, then the (left-continuous) 
inverse function with respect to $t$ is defined by
$$
\phi^{-1}(x,t):= \inf\{\tau\geq 0: \phi(x,\tau)\geq t\}.
$$ 
If $\phi$ is strictly increasing and continuous in $t$, then this is just the normal inverse function. 

\begin{definition} 
Let $\phi:\Omega\times[0,\infty)\to[0,\infty)$ and $\gamma>0$. 
We define some conditions related to regularity with 
respect to the $t$-variable. 
\vspace{0.2cm}
\begin{itemize}
\item[\normalfont(aInc)$_\gamma$]\label{ainc} 
The map $t\mapsto \phi(x,t)/t^\gamma$ is almost increasing with constant $L\geq 1$ uniformly in $x\in\Omega$. 
\vspace{0.2cm}
\item[\normalfont(Inc)$_\gamma$]\label{inc} 
The map $t\mapsto \phi(x,t)/t^\gamma$ is non-decreasing for every $x\in\Omega$. 
\vspace{0.2cm}
\item[\normalfont(aDec)$_\gamma$]\label{adec} 
The map $t\mapsto \phi(x,t)/t^\gamma$ is almost decreasing with constant $L\geq 1$ uniformly in $x\in\Omega$.
\vspace{0.2cm}
\item[\normalfont(Dec)$_\gamma$]\label{dec} 
The map $t\mapsto \phi(x,t)/t^\gamma$ is non-increasing for every $x\in\Omega$.
\vspace{0.2cm}
\item[\normalfont(A0)] \label{azero} There exists $L\geq 1$ such that $L^{-1}\leq \phi(x,1)\leq L$ for every $x\in \Omega$.
\end{itemize}
\end{definition}

Note that this version of \azero{} is slightly stronger than the one used in \cite{HarH_book}, 
but they are equivalent under the doubling assumption \adec{}. 
Let $0<c\leq 1\leq C<\infty$. If $\phi$ satisfies \ainc{\gamma} with 
constant $L\geq1$, then 
\[
\phi(x,ct)\leq Lc^\gamma\phi(x,t)\quad\text{and}\quad L^{-1}C^\gamma \phi(x,t)\leq \phi(x,Ct) \quad\text{for all }\ (x,t)\in \Omega\times[0,\infty).
\]
On the other hand, if $\phi$ satisfies \adec{\gamma} with the constant $L\geq 1$, then 
\[
L^{-1} c^\gamma \phi(x,t)\leq \phi( x,ct) \quad\text{and}\quad \phi(x,Ct)\leq LC^\gamma\phi(x,t) \quad\text{for all }\ (x,t)\in\Omega\times[0,\infty).
\]
\begin{remark}
If $\phi$ satisfies \ainc{\gamma} or \adec{\gamma} for some $\gamma>0$, then so do $\phi_{B_r}^-$ and $\phi_{B_r}^+$ for any $B_r\subset\Omega$.
\end{remark}

\begin{remark}\label{rmkincdec}
Suppose that $\phi(x,\cdot)\in C^1([0,\infty))$ for each $x\in\Omega$ and that $\gamma>0$. 
Then 
\begin{itemize}
\item 
$\phi$ satisfies \inc{\gamma} if and only if 
$\gamma \phi(x,t)\leq t\phi'(x,t)$ for all $x\in\Omega$ and $t\in[0,\infty)$;
\item
$\phi$ satisfies \dec{\gamma} if and only if 
$\gamma \phi(x,t)\geq t\phi'(x,t)$ for all $x\in\Omega$ and $t\in[0,\infty)$.
\end{itemize}
These conclusions are obtained by differentiating the function $t\mapsto\phi(x,t)/t^\gamma$.
\end{remark}

For functions $f,g:U\to \R$ with $U\subset \Rn$, 
$f\lesssim g$ or $f\approx g$ (in $U$) mean that there exists $C\geq 1$ such that $f(y)\leq C g(y)$ or $C^{-1} f(y)\leq g(y)\leq C f(y)$, respectively, for all $y\in U$. In particular, in this paper we shall use these symbols when the relevant constants $C$ depend only on $n$ and constants from the fundamental conditions \ainc{\gamma}, \adec{\gamma}, \inc{\gamma}, \dec{\gamma} and \azero{}. By following this, for instance, \azero{} can be written as $\phi(\cdot,1)\approx 1$ in $\Omega$. 
We use some results from papers with a weaker notion of 
equivalence: $f\simeq g$ (in $U$) which means that there exists $C\geq 1$ such that $f(C^{-1}y)\leq g(y)\leq f(Cy)$ for all $y\in U$. However, if \adec{} holds, 
then $\simeq$ and $\approx$ are equivalent and furthermore constants can be moved 
inside and outside of $\phi$ as observed above. 

\subsection*{Basic properties of generalized \texorpdfstring{$\phi$}{Phi}-functions and related functions spaces}
We next introduce classes of $\Phi$-functions. Let $L^0(\Omega)$ be the set of the 
measurable functions on $\Omega$. In the sequel we omit the words ``generalized'' and ``weak''  
from the parentheses. 

\begin{definition}\label{defPhi}
Let $\phi:\Omega\times[0,\infty)\to [0,\infty]$. We call 
$\phi$ a \textit{(generalized) $\Phi$-prefunction} if $x\mapsto \phi(x,|f(x)|)$ is measurable for every $f\in L^0(\Omega)$, and $t\mapsto \phi(x,t)$ is non-decreasing for 
every $x\in\Omega$ and satisfies that 
$\phi(x,0)=\lim_{t\to0^+}\phi(x,t)=0$ and $\lim_{t\to\infty}\phi(x,t)=\infty$
for every $x\in\Omega$. A prefunction $\phi$ is a 
\begin{itemize}
\item[(1)] \textit{(generalized weak) $\Phi$-function}, denoted $\phi\in\Phiw(\Omega)$, if it satisfies \ainc{1}.
\item[(2)] \textit{(generalized) convex $\Phi$-function}, denoted $\phi\in\Phic(\Omega)$, if $t\mapsto \phi(x,t)$ is left-continuous and convex for every $x\in\Omega$.
\end{itemize}
If $\phi$ is independent of $x$, then we denote $\phi\in\Phiw$ 
or $\phi\in\Phic$ without ``$(\Omega)$''. 
\end{definition}

We note that convexity implies \inc{1} so that
$\Phic(\Omega)\subset \Phiw(\Omega)$.
For $\phi\in\Phiw(\Omega)$, the \textit{generalized Orlicz space} (also known as the \textit{Musielak--Orlicz space}) is defined by 
\[
L^{\phi}(\Omega):=\big\{f\in L^0(\Omega):\|f\|_{L^\phi(\Omega)}<\infty\big\}
\] 
with the (Luxemburg) norm 
\[
\|f\|_{L^\phi(\Omega)}:=\inf\bigg\{\lambda >0: \varrho_{\phi}\Big(\frac{f}{\lambda}\Big)\leq 1\bigg\},
\ \ \text{where}\ \ \varrho_{\phi}(f):=\int_\Omega\phi(x,|f(x)|)\,dx.
\]
We denote by $W^{1,\phi}(\Omega)$ the set of $f\in L^{\phi}(\Omega)$ satisfying that $\partial_1f,\dots,\partial_nf \in L^{\phi}(\Omega)$, where $\partial_if$ is the weak derivative of $f$ in the $x_i$-direction, with the norm $\|f\|_{W^{1,\phi}(\Omega)}:=\|f\|_{L^\phi(\Omega)}+\sum_i\|\partial_if\|_{L^\phi(\Omega)}$. Note that if $\phi$ satisfies \adec{q} for some $q\ge 1$, then $f\in L^\phi(\Omega)$ if and only if $\varrho_\phi(f)<\infty$, and if $\phi$ satisfies \azero{}, \ainc{p} and \adec{q} for some $1<p\leq q$, then $L^\phi(\Omega)$ and $W^{1,\phi}(\Omega)$ are reflexive Banach spaces. In addition we denote by $W^{1,\phi}_0(\Omega)$ the closure of $C^\infty_0(\Omega)$ in $W^{1,\phi}(\Omega)$. For more information about the generalized Orlicz and Orlicz--Sobolev spaces, we refer to the monographs 
\cite{HarH_book,LanM19} and also \cite[Chapter~2]{DieHHR11}.

\smallskip

For $\phi:[0,\infty)\to [0,\infty)$, we define the conjugate function by
$$
\phi^*(x,t) :=\sup_{s\geq 0} \, (st-\phi(x,s)).
$$ 
By definition, we have the following Young inequality: 
$$
ts\leq \phi(x,t)+\phi^*(x, s)\quad \text{for all }\ s,t\ge 0.
$$
If $\phi\in \Phic(\Omega)$, then $(\phi^*)^*=\phi$ \cite[Theorem~2.2.6]{DieHHR11}. 

We state some properties of $\Phi$-functions, for which we refer to 
\cite[Chapter 2]{HarH_book}. 

\begin{proposition}\label{prop00} 
Let $\phi$ be a $\Phi$-prefunction. 
\begin{itemize}
\item[(1)] 
If $\phi$ satisfies \ainc{1}, then there exists $\psi\in\Phic(\Omega)$ such that $\phi\simeq\psi$. 
\item[(2)] 
If $\phi$ satisfies \adec{1}, then there exists $\psi\in\Phic(\Omega)$ such that $\phi\approx\psi^{-1}$. Note that $\psi^{-1}(x,\cdot)$ is concave.
\item[(3)] 
Let $p,q\in (1,\infty)$. Then $\phi$ satisfies \ainc{p} or 
\adec{q} if and only if $\phi^*$ satisfies \adec{\frac{p}{p-1}} or \ainc{\frac{q}{q-1}}, respectively. 
\item[(4)]
Let $\phi\in \Phiw(\Omega)$ and $\gamma\ge 1$. Then 
$\phi$ satisfies \ainc{\gamma} or \adec{\gamma} if and only if $\phi^{-1}$ satisfies \adec{1/\gamma} or \ainc{1/\gamma}, respectively.
\item[(5)] 
If $\phi$ satisfies \ainc{p} and \adec{q}, then for any $s,t\ge 0$ and $\kappa\in(0,1)$,
\[
ts 
\leq 
\phi(x,\kappa^{\frac{1}{p}}t)+\phi^*(x,\kappa^{-\frac{1}{p}}s) 
\lesssim 
\kappa \phi(x,t)+\kappa^{-\frac{1}{p-1}} \phi^*(x,s)
\]
and
\[
ts 
\leq 
\phi(x,\kappa^{-\frac{1}{q'}}t)+\phi^*(x,\kappa^{\frac{1}{q'}}s) 
\lesssim 
\kappa^{-(q-1)} \phi(x,t)+\kappa \phi^*(x,s).
\]
\end{itemize}
\end{proposition}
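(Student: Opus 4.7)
The plan is to treat each of the five items separately; these are standard facts in the theory of generalized $\Phi$-functions developed in \cite[Chapter~2]{HarH_book}, so the proof would essentially be an organized reference with the key arguments indicated. Items (1) and (2) require an explicit construction of an equivalent $\Phi$-function with better regularity, while (3)--(5) reduce to scaling arguments using the definitions of $\phi^*$ and $\phi^{-1}$. Throughout I would write $p':=p/(p-1)$ and $q':=q/(q-1)$.

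For (1), I would set $h(x,s):=\sup_{0<r\leq s}\phi(x,r)/r$ and $\psi(x,t):=\int_0^t h(x,s)\,ds$. Since $h(x,\cdot)$ is non-decreasing as a supremum, $\psi(x,\cdot)$ is convex; measurability in $x$ is inherited from $\phi$. The assumption \ainc{1} gives $\phi(x,s)/s\leq h(x,s)\leq L\phi(x,s)/s$, and integrating over $[t/2,t]$ combined with the monotonicity of $\phi(x,\cdot)$ yields the equivalence $\phi\simeq\psi$. Item (2) is the analogue with the roles of sup and inf reversed: by \adec{1} the function $\tilde h(x,s):=\inf_{0<r\leq s}\phi(x,r)/r$ is non-increasing, so $\tilde\phi(x,t):=\int_0^t\tilde h(x,s)\,ds$ is concave with $\tilde\phi\approx\phi$, and then $\psi:=(\tilde\phi)^{-1}$ is convex and satisfies $\phi\approx\psi^{-1}$.

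For (3), I would read \ainc{p} in the scaling form $\phi(x,\lambda t)\leq L\lambda^p\phi(x,t)$ for $\lambda\leq 1$. Substituting into $\phi^*(x,\lambda s)=\sup_u(\lambda s u-\phi(x,u))$ via the change of variable $u=\lambda^{p'/p}v$ and bounding $\phi(x,\lambda^{p'/p}v)\geq L^{-1}\lambda^{p'}\phi(x,v)$ yields $\phi^*(x,\lambda s)\leq L\lambda^{p'}\phi^*(x,s)$ for $\lambda\geq 1$, which is \adec{p'} for $\phi^*$. The converse direction follows from $(\phi^*)^*=\phi$ after reducing to $\Phic(\Omega)$ via (1), and the \adec{q}/\ainc{q'} pair is entirely symmetric. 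Item (4) is an elementary change of variables: for $0<a<b$, set $t=\phi^{-1}(x,a)$ and $s=\phi^{-1}(x,b)$; then \ainc{\gamma} reads $a/t^\gamma\leq L\,b/s^\gamma$, which rearranges directly to $\phi^{-1}(x,b)/b^{1/\gamma}\leq L^{1/\gamma}\phi^{-1}(x,a)/a^{1/\gamma}$, i.e., \adec{1/\gamma} for $\phi^{-1}$.

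For (5), Young's inequality $ts\leq\phi(x,t)+\phi^*(x,s)$ applied to the rescaled pair $(\kappa^{1/p}t,\kappa^{-1/p}s)$ gives the first inequality of the first display. Then \ainc{p} bounds $\phi(x,\kappa^{1/p}t)\leq L\kappa\,\phi(x,t)$ and (3) (which provides \adec{p'} for $\phi^*$) bounds $\phi^*(x,\kappa^{-1/p}s)\leq L\kappa^{-1/(p-1)}\phi^*(x,s)$, producing the second inequality. The second display is symmetric: apply Young to $(\kappa^{-1/q'}t,\kappa^{1/q'}s)$ and use \adec{q} on $\phi$ together with the \ainc{q'} of $\phi^*$ from (3). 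The main technical obstacle sits in (1)--(2), where one must carefully verify convexity/concavity and $x$-measurability of the constructed functions and track that the weaker equivalence $\simeq$ (rather than $\approx$) is the correct output in the absence of a matching \adec{} assumption; the remaining items are essentially mechanical once the scaling characterizations of \ainc{} and \adec{} are in hand.
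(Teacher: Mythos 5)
The paper does not prove this Proposition; it simply refers to \cite[Chapter 2]{HarH_book}, so there is no in-paper argument to compare against. Your sketches of (1), (4) and (5) contain the correct standard ideas. In (3) the change of variable $u=\lambda^{p'/p}v$ produces $\phi^*(x,\lambda s)\le\lambda^{p'}L^{-1}\phi^*(x,Ls)$ rather than a multiple of $\phi^*(x,s)$, so the constant does not close except when $L=1$; the fix is $u=(L\lambda)^{1/(p-1)}v$, which yields $\phi^*(x,\lambda s)\le L^{1/(p-1)}\lambda^{p'}\phi^*(x,s)$. Routing the converse through ``(1)'' is also logically delicate, since applying (1) to $\phi$ requires \ainc{1}, which is exactly what the converse must establish; one should instead apply the already-proven forward half of (3) to the automatically convex $\phi^*$ and then biconjugate.

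The genuine gap is in (2). The function $\tilde\phi(x,t):=\int_0^t\tilde h(x,s)\,ds$ with $\tilde h(x,s):=\inf_{0<r\le s}\phi(x,r)/r$ is concave, but $\tilde\phi\approx\phi$ is false in general. Because $\tilde h$ is non-increasing you only retain the lower bound $\tilde\phi(x,t)\ge\tfrac t2\,\tilde h(x,t)\gtrsim\phi(x,t)$; there is no analogue of the upper bound $\psi(x,t)\le t\,h(x,t)\le L\phi(x,t)$ that makes (1) work (there $h$ is non-decreasing, so the factor $t$ dominates the integral from above — the direction that is missing here). Concretely take $\phi(s)=\log^{-2}(1/s)$ for $s\le e^{-2}$, extended linearly for larger $s$. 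Then $\phi(s)/s$ is non-increasing, so \dec{1} holds with $L=1$, $\tilde h(s)=\phi(s)/s$, and
\[
\tilde\phi(t)=\int_0^t\frac{ds}{s\,\log^2(1/s)}=\frac{1}{\log(1/t)}=\phi(t)\log(1/t),
\]
so $\tilde\phi(t)/\phi(t)=\log(1/t)\to\infty$ as $t\to0^+$, and neither $\tilde\phi\approx\phi$ nor $\tilde\phi\simeq\phi$ holds. The correct route for (2) is not a mirror image of (1): deduce from (4) that $\phi^{-1}$ satisfies \ainc{1}, apply (1) to obtain $\chi\in\Phic(\Omega)$ with $\phi^{-1}\simeq\chi$, and set $\psi:=\chi$; then $\phi\simeq\psi^{-1}$ with $\psi^{-1}$ concave, and the doubling supplied by \adec{1} upgrades $\simeq$ to $\approx$.
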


If $\phi\in\Phic(\Omega)$, then there exists $\phi'=\phi'(x,t)$, which is non-decreasing and right-continuous, satisfying that
$$
\phi(x,t)=\int_0^t \phi'(x,s)\, ds.
$$
Such $\phi'$ is called the right-derivative of $\phi$. Note that this 
derivative was denoted by $\partial_t\phi$ in the introduction. 
We next collect some results about the derivative $\phi'$. 
For (4), we give a simple direct 
proof, since earlier proofs of the inequality used additional assumptions. 

\begin{proposition}\label{prop0} 
Let $\gamma>0$ and suppose that $\phi\in\Phic(\Omega)$ with derivative $\phi'$.
\begin{itemize}
\item[(1)] 
If $\phi'$ satisfies \ainc{\gamma}, \adec{\gamma}, \inc{\gamma} or \dec{\gamma}, then $\phi$ satisfies \ainc{\gamma+1}, \adec{\gamma+1}, \inc{\gamma+1} or \dec{\gamma+1}, respectively, with the same constant $L\geq 1$. 
\item[(2)] 
If $\phi'$ satisfies \adec{\gamma} with constant $L$, 
then $\phi(x,t)\approx t \phi'(x,t)$, more precisely
\[
\frac{t\phi'(x,t)}{2^{\gamma+1}L}\leq \phi(x,t) \leq t\phi'(x,t) \quad \text{for }\ (x,t)\in \Omega\times[0,\infty).
\]
\item[(3)] 
If $\phi'$ satisfies \azero{} and \adec{\gamma} with constant $L\geq 1$, then $\phi$ also satisfies \azero{}, with constant depending on $L$ and $\gamma$.
\item[(4)] 
$\phi^*(x,\phi'(x,t))\le t\phi'(x,t)$. 
\end{itemize}
\end{proposition}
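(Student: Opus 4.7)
My plan centers on the integral representation $\phi(x,t)=\int_0^t\phi'(x,s)\,ds$ together with the fact that $\phi'(x,\cdot)$ is non-decreasing (since $\phi(x,\cdot)$ is convex). I would establish (1) first as a pure change-of-variables computation, read off (2) and (3) from it, and finally treat (4) as a short convexity argument.

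For (1), the key move is the substitution $s=\tau t$ in the integral, which gives
\[
\frac{\phi(x,t)}{t^{\gamma+1}}=\int_0^1\tau^{\gamma}\,\frac{\phi'(x,\tau t)}{(\tau t)^{\gamma}}\,d\tau.
\]
Any of \inc{\gamma}, \ainc{\gamma}, \dec{\gamma} or \adec{\gamma} for $\phi'$ can now be applied pointwise in $\tau$ to the factor $\phi'(x,\tau t)/(\tau t)^{\gamma}$, and integrating in $\tau$ preserves both the monotonicity type and (in the almost-monotone cases) the same constant $L$; this proves (1). For (2), the bound $\phi(x,t)\le t\phi'(x,t)$ is the trivial one from monotonicity of $\phi'(x,\cdot)$, while restricting the integral to $[t/2,t]$ and using \adec{\gamma} to estimate $\phi'(x,s)\ge 2^{-\gamma}L^{-1}\phi'(x,t)$ on that interval yields the sharper lower bound $t\phi'(x,t)/(2^{\gamma+1}L)$. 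Part (3) is then a corollary obtained by setting $t=1$ in (2) and combining with the \azero{} bound on $\phi'(x,1)$.

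For (4), which the excerpt flags as previously requiring extra hypotheses, I would avoid any smoothness of $\phi'$ by working directly with the integral representation. For every $\tau\ge 0$, splitting $\phi(x,\tau)-\phi(x,t)=\int_t^\tau\phi'(x,s)\,ds$ into the cases $\tau>t$ and $\tau<t$ and using the monotonicity of $\phi'(x,\cdot)$ on each interval gives the subgradient-type inequality
\[
\phi(x,\tau)\ge\phi(x,t)+\phi'(x,t)(\tau-t).
\]
Plugging this into the Legendre definition of $\phi^*$ and taking the supremum then yields
\[
\phi^*(x,\phi'(x,t))=\sup_{\tau\ge 0}\bigl(\phi'(x,t)\tau-\phi(x,\tau)\bigr)\le \phi'(x,t)t-\phi(x,t)\le t\phi'(x,t),
\]
using only $\phi(x,t)\ge 0$ in the last step. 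The main subtle point I anticipate in the whole proposition is precisely this subgradient inequality in (4), since $\phi'$ need not be continuous; the integral form sidesteps that issue cleanly. The other three parts are essentially one-line consequences of the integral representation once the change of variables in (1) is in place.
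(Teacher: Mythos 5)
Your proof is correct and follows essentially the same route as the paper's: (1) via the integral representation and a change of variables, (2) by comparing with $t\phi'(x,t)$ and applying \adec{\gamma} near $t$, (3) by setting $t=1$ in (2), and (4) by the subgradient inequality plugged into the Legendre transform. The only small difference is in (4), where you derive the subgradient inequality explicitly from the integral representation and monotonicity of $\phi'(x,\cdot)$, whereas the paper simply cites it as a consequence of convexity; your version is a touch more self-contained but the argument is the same.
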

\begin{proof}
We start with (1) and suppose that $\phi'$ satisfies \ainc{\gamma}. 
Fix $0<t< s<\infty$ and set $a:=\frac st> 1$. Then \ainc{\gamma} 
of $\phi'$ implies that
\begin{align*}
\frac{\phi(x,t)}{t^{\gamma+1}}
&=\frac{1}{t^{\gamma+1}} \int_0^{t}\phi'(x,\tau)\, d\tau\\
& \leq \frac{L}{t^{\gamma+1}} \int_0^{t}\frac{\phi'(x,a\tau)}{a^\gamma}\, d\tau \overset{\widetilde\tau=a\tau}{=}\frac{L}{(at)^{\gamma+1}} \int_0^{at}\phi'(x,\widetilde\tau)\, d\tilde \tau= L\frac{\phi(x,s)}{s^{\gamma+1}},
\end{align*}
which means $\phi$ satisfies \ainc{\gamma+1}. In the same way we can also prove that \adec{\gamma} of $\phi'$ implies \adec{\gamma+1} of $\phi$. The claims regarding \inc{} and 
\dec{} follow when $L=1$. 

We next prove (2). Since $\phi'$ is non-decreasing, it follows that 
\[
\tfrac t2\phi'(x,\tfrac t2) \le \underbrace{ \int_0^t \phi'(x,\tau)\, d\tau}_{=\phi(x,t)}\le t\phi'(x,t).
\]
By the \adec{\gamma} condition of $\phi'$, we have 
$\phi'(x,\tfrac t2)\ge L^{-1}2^{-\gamma} \phi'(x,t)$, which implies 
$\phi(x,t)\approx t \phi'(x,t)$.

Then, we prove (3). By (2) and \azero{} of $\phi'$ it follows that 
$\phi(\cdot,1)\approx 1\cdot \phi'(\cdot,1)\approx 1$, so $\phi$ satisfies \azero{}. 

Finally, we prove (4). Since $\phi$ is convex, 
$\phi(x,s)\ge \phi(x,t) + k(s-t)$, where $k:=\phi'(x,t)$ is the slope. 
Then from the definition of the conjugate function we have 
\[
\phi^*(x,\phi'(x,t)) 
=
\sup_{s\geq 0} (sk-\phi(x,s))
\le 
\sup_{s\geq 0} (sk - \phi(x,t) - k(s-t))
= tk - \phi(x,t)
\le 
t\phi'(x,t). \qedhere
\]
\end{proof}

We end this subsection with some properties for $C^1$-regular $\Phi$-functions. 
Note that Proposition~\ref{prop000}(2) below is proved for $C^2$-functions 
in \cite[Lemma~3]{DieE08} -- here we provide a more elementary proof
which is based on a reduction to the same claim for the function $t^p$, that is
\begin{equation}\label{eq:polyEst}
\big(|x|^{p-2} x-|y|^{p-2} y\big) \cdot (x-y)
\approx
(|x|+|y|)^{p-2} |x-y|^2 \quad\text{for } p>1. 
\end{equation}
While versions of this claim are commonly known, we have not found this precise 
formulation in the literature. Rather than providing a proof of \eqref{eq:polyEst}, 
we just invoke \cite[Lemma~3]{DieE08}, since $t^p$ is certainly a $C^2$-function. 

\begin{proposition}\label{prop000}
Let $\phi\in \Phic\cap C^1([0,\infty))$ with $\phi'$ satisfying 
\inc{p-1} and \dec{q-1} for some $1<p\leq q$. 
Then for $\kappa\in(0,\infty)$ and $x,y\in \Rn$ the following hold:
\begin{enumerate}
\item
$\displaystyle \frac{\phi'(|x|+|y|)}{|x|+|y|}|x-y|^2\approx
\Big(\frac{\phi'(|x|)}{|x|}x-\frac{\phi'(|y|)}{|y|}y\Big) \cdot (x-y)$;
\vspace{6pt}
\item
$\displaystyle \frac{\phi'(|x|+|y|)}{|x|+|y|}|x-y|^2 
\lesssim \phi(|x|) -\phi(|y|)-\frac{\phi'(|y|)}{|y|}y\cdot(x-y)$; 
\vspace{6pt}
\item
$\displaystyle \phi(|x-y|) \lesssim 
\kappa\left[\phi(|x|)+\phi(|y|)\right]+ \kappa^{-1}\frac{\phi'(|x|+|y|)}{|x|+|y|}|x-y|^2$.
\end{enumerate}
If additionally $\phi\in C^2((0,\infty))$, then 
$t\phi''(t)\approx\phi'(t)$ and $\frac{\phi'(|x|+|y|)}{|x|+|y|}$ 
can be replaced by $\phi''(|x|+|y|)$. 
\end{proposition}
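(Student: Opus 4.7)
The plan is to prove (1) as the core estimate, derive (2) from (1) by the fundamental theorem of calculus, and derive (3) by Young's inequality. Set $G(z):=\nabla[\phi(|\cdot|)](z)=\phi'(|z|)z/|z|$ and $s:=|x|+|y|$.

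For (1), I would reduce to the polynomial inequality \eqref{eq:polyEst}. Put $h(t):=\phi'(t)/t^{p-1}$; by \inc{p-1} the function $h$ is non-decreasing, while by \dec{q-1} the function $h(t)/t^{q-p}$ is non-increasing, which together give the two-sided scaling $h(\lambda s)\approx h(s)$ for $\lambda\in[\tfrac12,1]$. Writing $G(z)=h(|z|)|z|^{p-2}z$, the identity
\begin{align*}
(G(x)-G(y))\cdot(x-y)
&=\tfrac{h(|x|)+h(|y|)}{2}\bigl(|x|^{p-2}x-|y|^{p-2}y\bigr)\cdot(x-y) \\
&\qquad+\tfrac{h(|x|)-h(|y|)}{2}\bigl(|x|^{p-2}x+|y|^{p-2}y\bigr)\cdot(x-y)
\end{align*}
splits the left-hand side into two summands which, under the WLOG assumption $|x|\ge|y|$, are both nonnegative: the first by \eqref{eq:polyEst}, the second since $h(|x|)\ge h(|y|)$ and a direct computation (minimizing the expansion $(|x|^p-|y|^p)+(x\cdot y)(|y|^{p-2}-|x|^{p-2})$ over the angle between $x$ and $y$, splitting $p\ge 2$ versus $p<2$) shows $(|x|^{p-2}x+|y|^{p-2}y)\cdot(x-y)\ge 0$. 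The lower bound of (1) follows from the first summand alone, using $h(|x|)\gtrsim h(s)$ and \eqref{eq:polyEst}. The upper bound controls the first summand by the matching direction of \eqref{eq:polyEst} and the second summand via the Hölder-type estimate $h(|x|)-h(|y|)\lesssim h(s)(|x|-|y|)/s$ (a consequence of \dec{q-1}) combined with $(|x|^{p-2}x+|y|^{p-2}y)\cdot(x-y)\lesssim s^{p-1}|x-y|$.

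For (2): since $\Phi(z):=\phi(|z|)$ is $C^1$ with $\nabla\Phi=G$, the fundamental theorem of calculus along $z_\tau:=y+\tau(x-y)$ yields
\[
\phi(|x|)-\phi(|y|)-G(y)\cdot(x-y)=\int_0^1\bigl(G(z_\tau)-G(y)\bigr)\cdot(x-y)\,d\tau.
\]
Applying (1) with $(z_\tau,y)$ in place of $(x,y)$ and using $|z_\tau-y|=\tau|x-y|$ shows that the integrand is $\gtrsim\tau\cdot\phi'(|z_\tau|+|y|)/(|z_\tau|+|y|)\cdot|x-y|^2$. Restricting to $\tau\in[\tfrac34,1]$, where $|z_\tau|+|y|\approx s$ (verified by expanding $|z_\tau|^2$ and bounding uniformly in the angle between $x$ and $y$), and using the scaling of $g(t):=\phi'(t)/t$ from \inc{p-1} and \dec{q-1} to conclude $g(|z_\tau|+|y|)\approx g(s)$ on this range, integration yields (2). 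For (3): $|x-y|\le s$ by the triangle inequality, Proposition~\ref{prop0}(2) gives $\phi(|x-y|)\approx|x-y|\phi'(|x-y|)$, and \inc{p-1} gives $\phi'(|x-y|)\le\phi'(s)(|x-y|/s)^{p-1}$, so $\phi(|x-y|)\lesssim\phi'(s)s(|x-y|/s)^p$; Young's inequality $u^p\lesssim\kappa+\kappa^{-c}u^2$ for $u=|x-y|/s\in[0,1]$ combined with $\phi(s)\approx s\phi'(s)$ concludes.

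The principal obstacle is the upper bound in (1), specifically the second summand in the symmetric decomposition. The condition \inc{p-1} alone would allow $h(|x|)$ to be arbitrarily larger than $h(|y|)$ when $|y|\ll|x|$, so the Hölder-type continuity of $h$ provided by \dec{q-1} is essential; careful case analysis separating $|y|\approx|x|$ from $|y|\ll|x|$ (with the degenerate case $y=0$ handled directly) is required. The lower bound, by contrast, is immediate from the nonnegativity of both summands and \eqref{eq:polyEst}.
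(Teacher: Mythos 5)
Your proof is correct, and part (1) is established by a genuinely different route than the paper's. The paper observes that for $|x|\ge|y|$ there is a single interpolating exponent $\gamma\in[p-1,q-1]$ with $\phi'(|y|)=(|y|/|x|)^\gamma\phi'(|x|)$, so that
\[
\Big(\frac{\phi'(|x|)}{|x|}x-\frac{\phi'(|y|)}{|y|}y\Big)\cdot(x-y)
=\frac{\phi'(|x|)}{|x|^\gamma}\big(|x|^{\gamma-1}x-|y|^{\gamma-1}y\big)\cdot(x-y),
\]
after which \eqref{eq:polyEst} applies with exponent $\gamma+1$, uniformly in $\gamma$; the two-sided bound then falls out in one step. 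You instead write $\phi'(t)/t=h(t)\,t^{p-2}$ with $h(t)=\phi'(t)/t^{p-1}$ and use the symmetric split into the $\tfrac{h(|x|)+h(|y|)}{2}$ and $\tfrac{h(|x|)-h(|y|)}{2}$ pieces. This works, and your supporting claims are correct: $(|x|^{p-2}x+|y|^{p-2}y)\cdot(x-y)\ge0$ (after minimizing over the angle, separately for $p\ge2$ and $p<2$), and the modulus-of-continuity bound $h(|x|)-h(|y|)\lesssim h(s)(|x|-|y|)/s$ follows since $h(|y|)/h(|x|)\ge(|y|/|x|)^{q-p}$ by \dec{q-1} and $1-u^{q-p}\lesssim 1-u$ on $[0,1]$. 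The trade-off is that your route carries extra casework (the angle minimization and the derived Hölder estimate on $h$) that the paper's interpolation trick avoids entirely. Part (2) is essentially the paper's argument (fundamental theorem of calculus along the segment, apply (1), restrict to a final subinterval where $|z_\tau|+|y|\approx|x|+|y|$). For (3), the paper applies $ab\le\tfrac12(a^2+b^2)$ to $|x-y|$ directly and gets the exponent $-1$ on $\kappa$ on the nose, while your route through $u^p\lesssim\kappa+\kappa^{-c}u^2$ yields $\kappa^{-c}$ for a $p$-dependent $c$; since $\kappa$ is a free parameter this is a harmless re-parametrization, but the statement as written has $\kappa^{-1}$, so it is worth noting the discrepancy.
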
 

\begin{proof}
When $\phi\in C^2((0,\infty))$, the inequalities $t\phi''(t)\approx\phi'(t)$ are direct consequences of 
Remark~\ref{rmkincdec} and Proposition~\ref{prop0}(1). This also implies 
$\frac{\phi'(|x|+|y|)}{|x|+|y|}\approx \phi''(|x|+|y|)$.

For (1), we may assume without loss of generality that $|x|\ge |y|$. By \inc{p-1}
and \dec{q-1}, 
\[
\Big(\frac{|y|}{|x|}\Big)^{q-1} \phi'(|x|)
\le
\phi'(|y|) 
\le 
\Big(\frac{|y|}{|x|}\Big)^{p-1} \phi'(|x|). 
\]
Thus there exists $\gamma\in [p-1,q-1]$ such that 
$\phi'(|y|) = (\frac{|y|}{|x|})^\gamma \phi'(|x|)$. 
Hence
\[
\Big(\frac{\phi'(|x|)}{|x|}x-\frac{\phi'(|y|)}{|y|}y\Big) \cdot (x-y)
=
\frac{\phi'(|x|)}{|x|^\gamma} 
\big(|x|^{\gamma-1}x-|y|^{\gamma-1}y\big) \cdot (x-y).
\]
We use \eqref{eq:polyEst} with $\gamma+1$ in place of $p$.
Furthermore, from $|x|\ge |y|$ it follows that $|x|+|y|\approx 
|x|$, and so we have 
\[
\Big(\frac{\phi'(|x|)}{|x|}x-\frac{\phi'(|y|)}{|y|}y\Big) \cdot (x-y)
\approx 
\frac{\phi'(|x|)}{|x|^\gamma} (|x|+|y|)^{\gamma-1} |x-y|^2
\approx 
\frac{\phi'(|x|+|y|)}{|x|+|y|} |x-y|^2.
\]

We next prove (2). Denote $\eta:= \frac{x-y}{|x-y|}$ and 
$z_s:=y+\eta s$. Then 
\[
\phi(|x|) - \phi(|y|) = \int_0^{|x-y|} \phi'(|z_s|)\frac{z_s}{|z_s|} \cdot \eta\, ds. 
\]
Furthermore, since $x-y=\eta |x-y|$, we have 
\begin{align*}
\phi(|x|) -\phi(|y|)-\frac{\phi'(|y|)}{|y|}y\cdot(x-y)
&=
\fint_0^{|x-y|} \Big( \frac{\phi'(|z_s|)}{|z_s|}z_s - \frac{\phi'(|y|)}{|y|}y\Big)\cdot (x-y)\, ds \\
&\approx 
\fint_0^{|x-y|} \frac{\phi'(|z_s|+|y|)}{|z_s|+|y|} |x-y|\, s\,ds,
\end{align*}
where the second step follows from (1) since $x-y = \frac{|x-y|} s (z_s-y)$. 
When $s\ge \frac34 |x-y|$, 
$|z_s|+|x|\approx |x|+|y|$ and (2) follows. 

We finally prove (3). By Young's inequality $ab\leq \tfrac{1}{2}(a^2+b^2)$, 
we find that 
\[
|x-y|
\le 
\tfrac12 \kappa (|x|+|y|) + \tfrac12 \kappa^{-1} (|x|+|y|)^{-1} |x-y|^2
\]
Therefore, since $\phi'$ is non-decreasing and $|x-y|\le |x|+|y|$, we find by $t\phi'(t)\approx \phi(t)$ that 
\begin{align*}
\phi(|x-y|) 
&\lesssim 
\phi'(|x|+|y|)|x-y|\\
&\le 
\kappa \phi'(|x|+|y|) (|x|+|y|) 
+ \kappa^{-1} \phi'(|x|+|y|)(|x|+|y|)^{-1} |x-y|^2 \\
&\approx 
\kappa [\phi(|x|)+\phi(|y|)] +\kappa^{-1}\frac{\phi'(|x|+|y|)}{|x|+|y|}|x-y|^2. 
\qedhere
\end{align*} 
\end{proof}


\section{Preliminary regularity results}\label{sect:preregularity}

\subsection*{Assumptions for higher regularity} 

Here we introduce the new assumptions that are used to obtain $C^\alpha$-regularity for any $\alpha\in(0,1)$ or $C^{1,\alpha}$-regularity for some $\alpha\in(0,1)$ of local minimizers of \eqref{mainfunctional}.
We also restate the definition of \MA{} from the introduction, so that 
it can be more easily compared with its weaker variant, \wMA{}. 

In the next definition, we have several conditions which are assumed to hold 
``for any small ball''; this means that it holds for all $r<r_0$ for some $r_0>0$. 

\begin{definition} \label{Def41}
Let $\phi\in \Phiw(\Omega)$. We define some conditions related to regularity with 
respect to the $x$-variable. 
\vspace{0.2cm}
\begin{itemize}
\vspace{0.2cm}
\item[\normalfont(A1)] \label{aone} 
There exists $L\geq 1$ such that for any $B_r\Subset \Omega$ with $|B_r|<1$,
\[
\phi_{B_r}^+(t)\le L \phi_{B_r}^-(t) \quad\text{for all }\ t>0\ \ 
\text{with }\ \phi_{B_r}^-(t)\in [1,|B_r|^{-1}].
\]
\item[\normalfont{(VA1)}] \label{VA1}
There exists a non-decreasing continuous function $\omega:[0,\infty)\to[0,1]$ with $\omega(0)=0$ 
such that for any small $B_r\Subset \Omega$,
\[
\phi^+_{B_r}(t)\leq (1+\omega(r))\phi^-_{B_r}(t)\quad\text{for all }\ t>0\ \ 
\text{with }\ \phi^-_{B_r}(t)\in[\omega(r),|B_r|^{-1}].
\]
\item[\normalfont{(wVA1)}] \label{wVA1} 
For any $\epsilon>0$, there exists a 
non-decreasing continuous function $\omega=\omega_\epsilon:[0,\infty)\to[0,1]$ with 
$\omega(0)=0$ such that for any small ball $B_r\Subset \Omega$,
\[
\phi^+_{B_r}(t)\leq (1+\omega(r))\phi^-_{B_r}(t)
+ \omega(r) \quad\text{for all }\ t>0\ \ 
\text{with }\ 
\phi^-_{B_r}(t)\in[\omega(r),|B_r|^{-1+\epsilon}] .
\]
\end{itemize}
\end{definition}

Intuitively, \aone{} is a jump-condition that restricts the amount that 
$\phi$ can jump between nearby points, whereas \MA{} and \wMA{} are continuity 
conditions that imply continuity with respect to the $x$-variable.  



\begin{remark}\label{rmkMA1}
We see that \MA{} implies \wMA{} which in turn implies \aone{}. 
Assumption \MA{} is easier to understand but we emphasize that \wMA{} covers an 
interesting borderline case which has arisen in the double phase case, cf.\ 
Corollary~\ref{corexdouble2}. 
\end{remark}

\begin{remark}\label{rem:pdeCase}
Finally, we would like to explain why we adapt the methodology of calculus of variations, instead of one of partial differential equations, since indeed 
$u$ is a minimizer of \eqref{mainfunctional} if and only if it is a weak solution to
\[ 
\div \left(\frac{\phi'(x,|Du|)}{|Du|}Du\right)=0\quad \text{in }\ \Omega,
\] 
see \cite{HarHK16}.
In the comparison step in our approach, we take advantage of the minimizing property of $u$. If we would instead use the PDE approach, to the best of our understanding, the main assumption \MA{} would be replaced by
the assumption
\[ 
(\phi')^+_{B_r}(t)\leq (1+\omega(r))(\phi')^-_{B_r}(t)\quad\text{for all }\ t>0\ \ \text{satisfying }\ \phi^-_{B_r}(t)\in[\omega(r),|B_r|^{-1}].
\] 
Compared with \MA{}, $\phi$ is replaced by $\phi'$ in the inequality. Since small values are not covered in this assumption or \MA{}, these two 
assumptions are not comparable, i.e.\ one may hold but not the other, in either 
direction. However, if $\phi$ satisfies the basic assumption in Theorem~\ref{mainthm1-0} (this is always assumed in our main theorems), we show that \wMA{} is implied by this assumption: for any $\epsilon>0$, any small $B_r\Subset\Omega$, 
any $t>0$ satisfying $\phi^-_{B_r}(t)\in[\omega(r),|B_r|^{-1+\epsilon}]\subset [\omega(r),|B_r|^{-1}]$ and any $x,y\in B_r$,
\begin{align*}
\phi(x,t)  = 
\int_0^t \phi'(x,s)\, ds & \le 
(1+\omega(r)) \int_{(\phi^-_{B_r})^{-1}(\omega(r))}^t (\phi')^-_{B_r}(s) \, ds  + 
\int_0^{(\phi^-_{B_r})^{-1}(\omega(r))} \phi'(x,s) \, ds\\
&\le 
(1+\omega(r))\int_0^t \phi'(y,s) \, ds + 
\phi(x,(\phi^-_{B_r})^{-1}(\omega(r)))\\
&\le (1+\omega(r))\phi(y,t) + c \omega(r)^{\frac{p}{q}}.
\end{align*}
Thus \wMA{} holds with function $c\omega(r)^{p/q}$. Furthermore, we could also consider a 
\wMA{}-type assumption with $\phi'$ instead of $\phi$, but the same 
argument shows that this also implies \wMA{}.

We note that such difference between regularity assumptions for the minimizer and the PDE problem does 
not appear in types I--III. This also shows that regularity theory for general 
$\phi(x,t)$ cannot be understood easily by just mixing the ones for types I--III.
\end{remark}


\subsection*{Higher integrability and reverse H\"older type inequality}
We prove higher integrability of minimizers of \eqref {mainfunctional} and, as a 
corollary, a reverse H\"older type inequality. In this subsection we 
assume \aone{}.

The following higher integrability result appears as \cite[Theorem~1.1]{HarHK18} in the case 
$\delta=1$. From the proof in that article, one can derive the stated dependence on $\delta$ with the 
help of the \adec{q} assumption; alternatively, one can use that result and a covering argument.

\begin{lemma}[Higher integrability]\label{lemhigh} 
Let $\phi\in\Phiw(\Omega)$ satisfy \azero, \aone{}, \ainc{p} and \adec{q} with 
constant $L\geq 1$ and $1<p\leq q$. 
If $u\in W^{1,\phi}_{\loc}(\Omega)$ is a local minimizer of \eqref {mainfunctional}, 
then there exists $\sigma_0=\sigma_0(n,p,q,L)> 0$, $c_1=c_1(n,p,q,L)\geq 1$ and 
$\sigma_1=\sigma_1(\sigma_0,n,q)$ such that 
\begin{equation}\label{high0}
\bigg(\fint_{B_r} \phi(x,|Du|)^{1+\sigma_0}\,dx\bigg)^{\frac{1}{1+\sigma_0}} 
\leq 
c_1 \delta^{-\sigma_1} \bigg(\fint_{B_{(1+\delta)r}} \phi(x,|Du|)\,dx +1 \bigg)
\end{equation}
for any $B_{2r}\Subset \Omega$ with $\| D u \|_{L^\phi(B_{2r})}\le 1$ and $\delta\in(0,1]$.
\end{lemma}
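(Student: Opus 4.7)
The plan is to deduce the inequality from its $\delta = 1$ case, which is exactly \cite[Theorem~1.1]{HarHK18}, by means of a Vitali-type covering argument on the ball $B_r$. The \adec{q} assumption enters only when propagating constants, and no new structural step is needed.

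First, I would record the cited result at an auxiliary scale: for every ball $B_{2s}(y) \Subset \Omega$ with $\|Du\|_{L^\phi(B_{2s}(y))}\le 1$,
\[
\left(\fint_{B_s(y)} \phi(x,|Du|)^{1+\sigma_0}\,dx\right)^{\frac{1}{1+\sigma_0}}
\le c\left(\fint_{B_{2s}(y)} \phi(x,|Du|)\,dx + 1\right),
\]
with $\sigma_0, c$ depending only on $n, p, q, L$. Given $\delta \in (0,1]$ and $B_{2r}\Subset\Omega$ as in the hypothesis, set $s := \delta r/4$ and pick points $x_1,\dots,x_N \in B_r$ (with $N \lesssim \delta^{-n}$) such that the balls $B_s(x_i)$ cover $B_r$ and the shrunken balls $B_{s/5}(x_i)$ are pairwise disjoint (Vitali). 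By construction $B_{2s}(x_i) \subset B_{(1+\delta)r} \Subset \Omega$ and $\|Du\|_{L^\phi(B_{2s}(x_i))}\le 1$, so the above display applies on each $B_s(x_i)$.

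Applying it at every $x_i$, multiplying by $|B_s(x_i)|$, and summing, I would obtain
\[
\int_{B_r} \phi(x,|Du|)^{1+\sigma_0}\,dx
\le \sum_{i=1}^N \int_{B_s(x_i)}\phi(x,|Du|)^{1+\sigma_0}\,dx
\lesssim \sum_{i=1}^N |B_s(x_i)|\left(\fint_{B_{2s}(x_i)} \phi(x,|Du|)\,dx + 1\right)^{1+\sigma_0}.
\]
Writing $M := \fint_{B_{(1+\delta)r}}\phi(x,|Du|)\,dx + 1$ and using $\fint_{B_{2s}(x_i)}\phi(x,|Du|)\,dx \le (|B_{(1+\delta)r}|/|B_{2s}(x_i)|)\cdot \fint_{B_{(1+\delta)r}}\phi(x,|Du|)\,dx \lesssim \delta^{-n} M$, together with $\sum_i |B_s(x_i)|\lesssim |B_r|$, I would then arrive at
\[
\fint_{B_r}\phi(x,|Du|)^{1+\sigma_0}\,dx \lesssim \delta^{-n(1+\sigma_0)} M^{1+\sigma_0}.
\]
Taking $(1+\sigma_0)$-th roots yields the claim with $\sigma_1 = n$, a permissible value of $\sigma_1(\sigma_0, n, q)$.

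The main obstacle I anticipate is cosmetic rather than structural: in the alternative route alluded to in the paper (keeping track of radii directly in the Caccioppoli and Sobolev--Poincar\'e steps that underlie \cite[Theorem~1.1]{HarHK18}), powers of the scaling factor must be absorbed into $\phi$ itself, and this is exactly where \adec{q} is used through the estimate $\phi(x,ct)\lesssim c^q\phi(x,t)$ for $c\ge 1$. The covering argument above avoids that bookkeeping at the price of a slightly larger $\sigma_1$. In either case the essential ingredient---the Gehring-type reverse H\"older inequality on fixed balls---is taken from \cite{HarHK18} and only the $\delta$-dependence has to be inserted.
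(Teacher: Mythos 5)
Your proposal is correct and follows exactly the second ("alternative") route the paper explicitly indicates in the sentence preceding the lemma: "alternatively, one can use that result and a covering argument." The Vitali covering with $s=\delta r/4$, the inclusion $B_{2s}(x_i)\subset B_{(1+\delta)r}$, the crude bound $\fint_{B_{2s}(x_i)}\phi(x,|Du|)\,dx\lesssim\delta^{-n}\fint_{B_{(1+\delta)r}}\phi(x,|Du|)\,dx$, and the bounded-overlap estimate $\sum_i|B_s(x_i)|\lesssim|B_r|$ all check out, yielding the claim with $\sigma_1=n$, a permissible choice.
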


\begin{remark}\label{rmkRomega'}
Fix $\Omega'\Subset \Omega$. Since $\int_{\Omega'}\phi(x,|Du|)\,dx <\infty$, there exists $R>0$ such that 
$$
\int_{B_r}\phi(x,|Du|)\,dx \leq 1\quad\left(\text{or, equivalently,}\quad \|Du\|_{L^{\phi}(B_r)}\leq 1\right)
$$
for $B_r\subset\Omega'$ with $r\leq R$. 
In view of the previous lemma, this means that $\phi(\cdot,|Du|)\in L^{1+\sigma_0}_{\loc}(\Omega)$. 
\end{remark}

The next lemma contains reverse H\"older type estimates for $Du$. 

\begin{lemma}\label{reverse} 
Let $\phi\in\Phiw(\Omega)$ satisfy \azero, \aone{}, \ainc{p} and \adec{q} with 
constant $L\geq 1$ and $1<p\leq q$. 
Suppose that $u\in W^{1,\phi}_{\loc}(\Omega)$ is a local minimizer of \eqref {mainfunctional} 
and $B_{2r}\Subset \Omega$ with $\| D u \|_{L^\phi(B_{2r})}\le 1$.
There exist $\sigma_0=\sigma_0(n,p,q,L)$ and, for every $t\in(0,1]$, $c_t=c(n,p,q,L,t)>0$ such that 
\begin{equation}\label{high1}
\bigg(\fint_{B_r} \phi(x,|Du|)^{1+\sigma_0}\,dx\bigg)^{\frac{1}{1+\sigma_0}} 
\leq 
c_t\left(\bigg(\fint_{B_{2r}} \phi(x,|Du|)^t\,dx\bigg)^{\frac{1}{t}} +1 \right)
\end{equation}
and $c=c(n,p,q,L)\geq 1$ such that
\[
\fint_{B_r} \phi(x,|Du|)\,dx\leq \left(\fint_{B_r} \phi(x,|Du|)^{1+\sigma_0}\,dx\right)^{\frac{1}{1+\sigma_0}} \leq c\left(\phi_{B_{2r}}^-\left(\fint_{B_{2r}} |Du|\,dx\right) +1 \right).
\]
\end{lemma}
\begin{proof}
We start with the first inequality. 
In \eqref{high0} we split $\phi = \phi^\theta \phi^{1-\theta}$ with $\theta\in(0,1)$ and 
use H\"older's inequality with exponents $\frac{1+\sigma_0}\theta$ and 
$\frac{1+\sigma_0}{1+\sigma_0-\theta}$ and Young's inequality with exponents 
$\frac1\theta$ and $\frac1{1-\theta}$:
\begin{align*}
&\bigg(\fint_{B_r} \phi(x,|Du|)^{1+\sigma_0}\,dx\bigg)^{\frac{1}{1+\sigma_0}} \\
&\hspace{2cm}\leq c_1\left[\delta^{-\sigma_1}\bigg(\fint_{B_{(1+\delta) r}} \phi(x,|Du|)^{1+\sigma_0}\,dx\bigg)^\frac\theta{1+\sigma_0} 
\bigg(\fint_{B_{2r}} \phi(x,|Du|)^t\,dx\bigg)^\frac{1-\theta}t +1\right]\\
&\hspace{2cm}
\leq \frac12 \bigg(\fint_{B_{(1+\delta) r}} \phi(x,|Du|)^{1+\sigma_0}\,dx\bigg)^\frac1{1+\sigma_0} + 
c_2 \delta^{-\frac{\sigma_1}{1-\theta}} \bigg(\fint_{B_{2r}} \phi(x,|Du|)^t\,dx\bigg)^\frac{1}t +c_1
\end{align*}
where we denoted $t:=\frac{(1+\sigma_0)(1-\theta)}{1+\sigma_0-\theta}$. 
Now we see from a standard iteration lemma, e.g.\ \cite[Lemma~4.2]{HarHT17}, 
that the first claim holds.  

We move on the the second claim. The first inequality directly follows from H\"older's inequality, hence we prove the second inequality. Taking $t=\frac1q$ in \eqref{high1}, we see that
\[
\bigg(\fint_{B_r} \phi(x,|Du|)^{1+\sigma_0}\,dx\bigg)^{\frac{1}{1+\sigma_0}} 
\leq c_{1/q}
\left[\bigg(\fint_{B_{2r}} \phi_{B_{2r}}^+(|Du|)^\frac{1}{q}\,dx\bigg)^{q} +1\right].
\]
We notice that the map $t\mapsto [\phi^+_{B_{2r}}(t)]^{\frac{1}{q}}$ satisfies \adec{1}, 
since $\phi^+_{B_{2r}}$ satisfies \adec{q}. Therefore, by Jensen's inequality with Proposition~\ref{prop00}(2), we have 
\begin{equation}\label{lem31pf2}
\bigg(\fint_{B_r} \phi(x,|Du|)^{1+\sigma_0}\,dx\bigg)^{\frac{1}{1+\sigma_0}}
\leq c\left[\phi^+_{B_{2r}}\bigg(\fint_{B_{2r}} |Du|\,dx\bigg)+1\right]
\end{equation}
for some $c=c(c_1,q,L)\ge 1$. In addition, since 
$$
\int_{B_{2r}}\phi_{B_{2r}}^-(|Du|)\, dx 
\le 
\int_{B_{2r}}\phi(x,|Du|)\, dx \leq 1 
\quad\left(\Leftrightarrow\quad 
\|Du\|_{L^{\phi}(B_{2r})}\leq 1\right),
$$
it follows by Jensen's inequality that 
$\phi_{B_{2r}}^-(\fint_{B_{2r}}|Du|\, dx) \lesssim |B_{2r}|^{-1}$. If also the inequality
$\phi_{B_{2r}}^-(\fint_{B_{2r}}|Du|\, dx) \ge 1$ holds, then \aone{} implies that 
\[
\phi_{B_{2r}}^+ \left(\fint_{B_{2r}} |Du|\,dx\right)
\leq L \phi_{B_{2r}}^- \left(\fint_{B_{2r}} |Du|\,dx\right),
\]
whereas in the case $\phi_{B_{2r}}^-(\fint_{B_{2r}}|Du|\, dx) \le 1$, \azero{} 
gives an upper bound of $c$ for the right-hand side of \eqref{lem31pf2}. 
\end{proof}


\subsection*{Regularity results for the autonomous case}
In this subsection, we consider $\phi\in \Phic\cap C^1([0,\infty))\cap C^2((0,\infty))$ 
with $\phi'$ satisfying \inc{p-1} and \dec{q-1} for some $1<p\leq q$. 
Fix $v_0\in W^{1,\phi}(B_r)$ and let $v\in v_0+W_0^{1,\phi}(B_r)$ be a solution of the minimization problem
\begin{equation}\label{secondmin}
\min_{w\in v_0+W_0^{1,\phi}(B_r)}\int_{B_r} \phi(|Dw|)\,dx,
\end{equation}
or equivalently a weak solution to 
\begin{equation}\label{secondeq}
\begin{cases}
\mathrm{div}\left(
\frac{\phi'(|Dv|)}{|Dv|}Dv\right)=0 &\quad\text{in}\ \ B_{r}, \\
v=v_0 &\quad\text{on}\ \ \partial B_{r}.
\end{cases}
\end{equation}
We start with the $C^{1,\alpha}$-regularity in the autonomous case, with appropriate 
estimates.

\begin{lemma}\label{lemHolder}
Let $\phi\in \Phic\cap C^1([0,\infty))\cap C^2((0,\infty))$ with 
$\phi'$ satisfying \inc{p-1} and \dec{q-1} for some $1<p\leq q$. 
If $v\in W^{1,\phi}(B_r)$ is a minimizer of \eqref{secondmin} or a weak solution to \eqref{secondeq}, then $Dv\in C^{\alpha_0}_{\loc}(B_r,\Rn)$ for some $\alpha_0\in (0,1)$ with the following estimates: for any $B_\rho(x_0)\subset B_r$,
\begin{equation}\label{Lip}
\sup_{B_{\rho/2}(x_0)} |Dv|\leq c\fint_{B_\rho(x_0)} |Dv|\, dx
\end{equation} 
and, for any $\tau\in(0,1)$,
\begin{equation}\label{C1alpha0}
\fint_{B_{\tau \rho}(x_0)}|Dv-(Dv)_{B_{\tau\rho}(x_0)}|\,dx 
\leq 
c \tau^{\alpha_0}\fint_{B_\rho(x_0)}|Dv|\,dx.
\end{equation}
Here $\alpha_0\in(0,1)$ and $c>0$ depend only on $n$, $p$ and $q$.
\end{lemma}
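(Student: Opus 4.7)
The plan is to reduce the statement to the classical $C^{1,\alpha}$-regularity theory for autonomous equations with Uhlenbeck structure, due essentially to Uhlenbeck \cite{Uhl77}, DiBenedetto \cite{DiBe1} and Lieberman \cite{Lie1}. The crucial structural fact is Proposition~\ref{prop000} combined with Remark~\ref{rmkincdec}: the hypotheses \inc{p-1} and \dec{q-1} on $\phi'$ translate pointwise into
\[
(p-1)\,\phi'(t)\le t\phi''(t)\le (q-1)\,\phi'(t)\qquad \text{for } t>0.
\]
This means that the coefficient matrix $A_{ij}(\xi):=\partial_{\xi_j}(\phi'(|\xi|)\xi_i/|\xi|)$ appearing in \eqref{secondeq} has eigenvalues $\phi'(|\xi|)/|\xi|$ (with multiplicity $n-1$) and $\phi''(|\xi|)$ (with multiplicity $1$), whose ratio is bounded between $1/(q-1)$ and $1/(p-1)$, independently of the magnitude of $|\xi|$. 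Hence \eqref{secondeq} is uniformly elliptic ``in the Uhlenbeck sense'' with ellipticity ratio depending only on $p,q$.

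For the Lipschitz estimate \eqref{Lip}, I would differentiate \eqref{secondeq} formally in $x_k$, multiply by $D_k v$, and sum over $k$ to obtain a scalar (sub)elliptic inequality for $|Dv|^2$ on its super-level sets. A Caccioppoli estimate on these levels, followed by the Moser--Bombieri--Giusti iteration (exploiting the uniform ellipticity ratio above), gives the sup-mean inequality
\[
\sup_{B_{\rho/2}(x_0)}|Dv|\;\le\; c\bigg(\fint_{B_\rho(x_0)} |Dv|^s\,dx\bigg)^{1/s}
\]
for any $s>0$, with $c=c(n,p,q,s)$. Taking $s=1$ yields \eqref{Lip}. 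Since $\phi$ is only assumed to be $C^2$ on $(0,\infty)$, one passes through a standard regularization $\phi_\epsilon(t):=\phi(t)+\epsilon(t^2+t^q)$, derives $\epsilon$-uniform estimates, and recovers the conclusion by the stability/uniqueness of minimizers.

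For the decay estimate \eqref{C1alpha0}, the classical Uhlenbeck--Lieberman Hölder estimate applied to $Dv$ (whose proof proceeds by freezing the coefficient matrix at a point where $|Dv|$ is comparable to its $L^\infty$-norm and iterating a Campanato-type excess decay) yields
\[
\mathrm{osc}_{B_{\tau\rho/2}(x_0)}\,Dv\;\le\;c\,\tau^{\alpha_0}\sup_{B_{\rho/2}(x_0)}|Dv|
\qquad\text{for all }\tau\in(0,1],
\]
with $\alpha_0=\alpha_0(n,p,q)\in(0,1)$. Chaining this with \eqref{Lip} and then averaging produces \eqref{C1alpha0} (the trivial range $\tau\in[1/2,1]$ is handled by enlarging $c$).

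The main obstacle is ensuring that $\alpha_0$ and all constants depend only on $n,p,q$, not on other features of $\phi$. This is what the two-sided bound $(p-1)\phi'(t)\le t\phi''(t)\le (q-1)\phi'(t)$ buys us: the normalized problem (dividing the coefficient matrix by its largest eigenvalue) is a uniformly elliptic PDE with ellipticity constants depending solely on $p,q$, so all De Giorgi--Nash--Moser and Campanato-type estimates inherit the same dependence. The low regularity of $\phi$ at $0$ is the only additional point compared to \cite{Lie1}, and it is absorbed by the approximation described above.
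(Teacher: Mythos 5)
Your overall strategy is essentially the one the paper follows in Appendix~\ref{app:a}: regularize, differentiate the equation, exploit that the normalized coefficient matrix $b_{ij}$ has ellipticity ratio controlled solely by $p$ and $q$ (this is exactly \eqref{ellipticityb}, coming from \inc{p-1} and \dec{q-1} of $\phi'$ via Remark~\ref{rmkincdec}), run De Giorgi--Nash--Moser and a Campanato-type excess decay, and pass to the limit in $\epsilon$. The paper's execution differs in that, following Lewis \cite{Le1}, it works not with $|Dv|^2$ and $Dv$ directly but with the quantities $v_0=\phi_\epsilon(|Du_\epsilon|)$ and $v_k=\frac{\phi_\epsilon(|Du_\epsilon|)}{|Du_\epsilon|}u_{\epsilon,x_k}$, which satisfy genuinely uniformly elliptic linear equations $\mathcal Lv_k=g_k$ with $|g_k|\lesssim g$ and a reverse H\"older inequality for $g$; the oscillation estimate for $Du_\epsilon$ is then recovered from that of the $v_k$ via the monotonicity inequality of Proposition~\ref{prop000}(1). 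This device is what makes the argument uniform across the singular ($p<2$) and degenerate ($p>2$) ranges, and the paper stresses that $p<2$ is precisely the case it could not locate in the literature with these estimates.

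The one concrete weak point in your write-up is the regularization $\phi_\epsilon(t)=\phi(t)+\epsilon(t^2+t^q)$. This removes the degeneracy at $Du=0$ when $p\ge 2$, but in the singular range $1<p<2$ the modulus of ellipticity $\phi_\epsilon'(t)/t=\phi'(t)/t+2\epsilon+\epsilon q\,t^{q-2}$ still blows up as $t\to0^+$ (since $\phi'(t)/t\gtrsim t^{p-2}$ by \dec{q-1} and \azero{}-type normalization), so the approximating problem is not non-degenerate and the qualitative assertions $u_\epsilon\in W^{2,2}_{\loc}$, $\phi_\epsilon(|Du_\epsilon|)\in W^{1,2}_{\loc}$ that you need in order to differentiate the equation do not follow from your regularization. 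The paper instead takes $\phi_\epsilon'(t)=\frac{\phi'(\epsilon+t)\,t}{\epsilon+t}$, so that $\phi_\epsilon'(t)/t\to\phi'(\epsilon)/\epsilon$ is finite and positive as $t\to0^+$, while $\phi_\epsilon'$ still satisfies \inc{\min\{1,p-1\}} and \dec{\max\{1,q-1\}} uniformly in $\epsilon$ (see \eqref{Ap01}--\eqref{Ap12}). Replacing your additive perturbation by this shift (or by a $\phi(\sqrt{\epsilon^2+t^2})$-type smoothing) repairs the step; with that change, and with the reverse-H\"older/Jensen argument \eqref{AApf1} to convert the sup bound on $\phi(|Du|)$ into the $L^1$-mean bound \eqref{Lip}, your outline matches the paper's proof.
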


The previous lemma is expected from \cite{Lie1}. In particular, we refer to \cite{Ba1} for the case $p\geq 2$. However, we cannot find any result treating the case $p<2$ with 
the above estimates in the literature. Hence, we give a proof of the above lemma in Appendix~\ref{app:a}. We also note that \inc{p-1} and \dec{q-1} of $\phi'$ are equivalent to $t\phi''(t)\approx \phi'(t)$ by Remark \ref{rmkincdec}, since we assume $\phi\in C^2((0,\infty))$.

We next state Calder\'on--Zygmund type estimates in $B_r$ with non-zero boundary data.

\begin{lemma}[Calder\'on--Zygmund estimates]\label{lemCZ}
Let $\phi\in \Phic\cap C^1([0,\infty))\cap C^2((0,\infty))$ with $\phi'$ satisfying 
\inc{p-1} and \dec{q-1} for some $1<p\leq q$, and $|B_r|\leq 1$. 
If $v\in W^{1,\phi}(B_r)$ is the minimizer 
of \eqref{secondmin} or the weak solution to \eqref{secondeq}, then there exists 
$c=c(n,p,q,p_1,q_1,L)>0$ such that 
\begin{equation}\label{ex1}
\|\phi(|Dv|)\|_{L^\theta(B_r)} \leq c\, \|\phi(|Dv_0|)\|_{L^\theta(B_r)}
\end{equation}
for any $\theta\in \Phiw(B_r)$ satisfying 
\azero{}, \aone{}, \ainc{p_1} and \adec{q_1} with constant $L\ge 1$ and $1<p_1\leq q_1$.

Moreover, fix $\kappa>0$ and assume that $\int_{B_r}\theta(x,\phi(|Dv_0|))\,dx\le \kappa$.
Then there exists $c=c(n,p,q,p_1,q_1,L)>0$ such that 
\begin{equation}\label{meanCZestimate}
\fint_{B_r}\theta(x,\phi(|Dv|))\,dx 
\leq 
c\big(\kappa^{\frac{q_1}{p_1}-1}+1\big)
\bigg(\fint_{B_r}\theta(x,\phi(|Dv_0|))\,dx + 1\bigg). 
\end{equation}
\end{lemma}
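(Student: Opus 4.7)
The plan is to split the argument in two parts. For the norm inequality \eqref{ex1} I will derive a weighted $L^s$ Calder\'on--Zygmund estimate for the equation \eqref{secondeq} and apply the extrapolation theorem of Cruz-Uribe--H\"ast\"o \cite{CruHas18}. For the modular inequality \eqref{meanCZestimate} I will re-run the norm argument for the auxiliary function $\tilde\theta(x,t):=\theta(x,t)+t^{p_1}$ and then convert the resulting norm estimate back to a modular one; as outlined in Section~\ref{sect:intro}, this trick is precisely what produces the $+1$ in \eqref{meanCZestimate}.

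\textbf{Step 1 (norm version).} The starting point is the weighted Calder\'on--Zygmund estimate for the autonomous Orlicz equation: for every $s>1$ and every Muckenhoupt weight $w\in A_s$,
\[
\int_{B_r}\phi(|Dv|)^{s}\,w\,dx\ \lesssim\ [w]_{A_s}^{\gamma}\int_{B_r}\phi(|Dv_0|)^{s}\,w\,dx,
\]
with $\gamma=\gamma(n,p,q,s)$. Since $\phi'$ satisfies \inc{p-1} and \dec{q-1}, Proposition~\ref{prop000} yields $t\phi''(t)\approx\phi'(t)$, so the equation fits into the standard weighted theory modelled on the $p$-Laplacian. Extrapolation \cite{CruHas18} then transfers this family to a norm estimate in $L^{\theta}(B_r)$ for every $\theta\in\Phiw(B_r)$ satisfying \azero, \aone, \ainc{p_1}, \adec{q_1}, which is exactly \eqref{ex1}.

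\textbf{Step 2 (modular version).} Here \aone\ alone is too weak to give a $|B_r|$-stable bound, and the norm-modular conversion is nonlinear. I handle both issues at once by applying \eqref{ex1} to $\tilde\theta(x,t):=\theta(x,t)+t^{p_1}$; since $\theta$ satisfies \ainc{p_1}, the function $\tilde\theta$ still satisfies \azero, \aone, \ainc{p_1}, \adec{q_1} with comparable constants, but in addition $\tilde\theta(x,t)\ge t^{p_1}$, which makes the standard comparisons
\[
\min\!\bigl(\varrho_{\tilde\theta}(f)^{1/p_1},\varrho_{\tilde\theta}(f)^{1/q_1}\bigr)\ \lesssim\ \|f\|_{L^{\tilde\theta}(B_r)}\ \lesssim\ \max\!\bigl(\varrho_{\tilde\theta}(f)^{1/p_1},\varrho_{\tilde\theta}(f)^{1/q_1}\bigr)
\]
effective with constants depending only on $p_1,q_1$ and \azero. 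Combining these with \eqref{ex1} for $\tilde\theta$ and with $\varrho_{\tilde\theta}(\phi(|Dv_0|))\le \varrho_\theta(\phi(|Dv_0|))+|B_r|\le\kappa+1$ (where the first bound uses \ainc{p_1} of $\theta$ to dominate $\int_{B_r}\phi(|Dv_0|)^{p_1}\,dx$), I obtain a modular estimate for $\varrho_{\tilde\theta}(\phi(|Dv|))$ whose worst power of $\kappa$ is $q_1/p_1$; distributing this as $\kappa^{q_1/p_1-1}+1$ times the linear modular, dropping $t^{p_1}$ on the left (since $\theta\le\tilde\theta$), and dividing by $|B_r|\le 1$ finally produces \eqref{meanCZestimate}.

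The main obstacle lies in Step 2: tracking the two-sided norm-modular comparison so that exactly the factor $\kappa^{q_1/p_1-1}+1$ emerges, and keeping separate the two additive constants---one coming from the $t^{p_1}$ perturbation and one from dividing by $|B_r|\le 1$ in the mean-integral normalization. The weighted estimate in Step 1 is essentially routine given $t\phi''(t)\approx\phi'(t)$; the conceptually new ingredient is the $t^{p_1}$ perturbation, whose role is precisely to upgrade \aone\ of $\theta$ to a condition strong enough for the extrapolation output to be usable at the modular level.
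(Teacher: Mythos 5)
Your Step 1 is fine and coincides with the paper's argument: the weighted estimate \eqref{weightedestimate} for the autonomous equation plus extrapolation in generalized Orlicz spaces (the paper invokes \cite[Corollary~5.3.7]{HarH_book}, which is based on \cite{CruHas18}) yields \eqref{ex1}. The gap is in Step 2. The two-sided comparison $\min(\varrho^{1/p_1},\varrho^{1/q_1})\lesssim\|f\|_{L^{\tilde\theta}}\lesssim\max(\varrho^{1/p_1},\varrho^{1/q_1})$ holds for \emph{any} $\tilde\theta$ with \ainc{p_1} and \adec{q_1} (the lower bound $\tilde\theta(x,t)\ge t^{p_1}$ plays no role in it), and chaining it through \eqref{ex1} gives
$\varrho_{\tilde\theta}(\phi(|Dv|))\lesssim\max\big(A,\,A^{p_1/q_1},\,A^{q_1/p_1}\big)$ with $A:=\varrho_{\tilde\theta}(\phi(|Dv_0|))$. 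When $A\ge1$ the worst term is $A^{q_1/p_1}\le\kappa^{q_1/p_1-1}A$, which is what you want. But when $A<1$ the maximum is $A^{p_1/q_1}$, and $A^{p_1/q_1}/A=A^{p_1/q_1-1}\to\infty$ as $A\to0$. Since after multiplying \eqref{meanCZestimate} by $|B_r|$ the additive slack on the right-hand side is only $|B_r|$ (not $1$), and $A\approx\varrho_\theta(\phi(|Dv_0|))+|B_r|$ can be arbitrarily small, the bound $A^{p_1/q_1}$ is \emph{not} controlled by $c(\kappa^{q_1/p_1-1}+1)(\varrho_\theta(\phi(|Dv_0|))+|B_r|)$. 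So your argument fails precisely in the small-modular regime.

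This is why the paper does not convert norms to modulars directly when $\int_{B_r}\theta(x,\phi(|Dv_0|))\,dx<1$. Instead it rescales: with $M=(\theta^-)^{-1}(\int_{B_r}\theta(x,\phi(|Dv_0|))\,dx)$ it forms $\bar\theta(x,t)=\theta(x,Mt)/\theta^-(M)$, $\bar\phi(t)=\phi(mt)/M$, $\bar v=v/m$, so that the rescaled data have modular exactly $\le1$; applying \eqref{ex1} at this threshold and undoing the scaling yields the power-loss-free estimate \eqref{strongmeanCZestimate}. The true purpose of the perturbation $\tilde\theta(x,t)=\theta(x,t)+t^{p_1}$ is to make this rescaling legitimate: $\bar\theta$ satisfies \azero{} and \aone{} uniformly only if the \aone{} inequality for $\tilde\theta$ holds on the extended range $\tilde\theta^-_{B_\rho}(t)\in[0,|B_\rho|^{-1}]$ (condition \eqref{exass}), which is exactly what adding $t^{p_1}$ achieves, and it is this substitution that produces the additive ``$+1$''. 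The case $\int_{B_r}\theta(x,\phi(|Dv_0|))\,dx\ge1$ is handled separately by the direct norm--modular conversion and yields the factor $\kappa^{q_1/p_1-1}$; your argument does recover that case, but the scaling step for small modulars is the missing idea.
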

\begin{proof}
In view of known results about gradient estimates for equations of $p$-Laplacian type or \eqref{secondeq}, see for instance \cite{BC1,BR1,MP1},
it is expected that for any $1<s<\infty$ and any Muckenhoupt weight $w\in A_s$,
\begin{equation}\label{weightedestimate}
\int_{B_r} \phi(|Dv|)^{s}w(x)\, dx \leq c \int_{B_r} \phi(|Dv_0|)^sw(x)\, dx,
\end{equation}
where $c>0$ depends only on $n,p,q,s$ and $[w]_{A_s}$ (see Appendix~\ref{app:b} 
for the definition of the Muckenhoupt class $A_s$). 
We outline the proof of \eqref{weightedestimate} in Appendix~\ref{app:b}.

We may assume that $\|\phi(|Dv_0|)\|_{L^\theta(B_r)} < \infty$, since otherwise 
\eqref{ex1} is trivial. Then $\|\phi(|Dv_0|)\|_{L^{p_1}(B_r)} < \infty$ by \ainc{p_1} of $\theta$
and so $\|\phi(|Dv|)\|_{L^{p_1}(B_r)} < \infty$ by \eqref{weightedestimate} with $s=p_1$. 
We define $\theta_j(x,t):=\min\{ \theta(x,t), j t^{p_1}\}$, $j>0$, and conclude that 
$\|\phi(|Dv|)\|_{L^{\theta_j}(B_r)} < \infty$.
Since $\phi(|Dv|) \in L^{\theta_j}(B_r)$, extrapolation for the generalized Orlicz functions, 
see \cite[Corollary 5.3.4]{HarH_book}, gives 
\[
\|\phi(|Dv|)\|_{L^{\theta_j}(B_r)}  
\lesssim 
\|\phi(|Dv_0|)\|_{L^{\theta_j}(B_r)} 
\le 
\|\phi(|Dv_0|)\|_{L^{\theta}(B_r)}.
\]
We note that in the 
statement of \cite[Corollary 5.3.4]{HarH_book}, $\phi$ is also assumed to 
satisfy the so-called (A2) condition, which is however not needed if the 
domain $\Omega$ is bounded \cite[Lemma~4.2.3]{HarH_book}, and in our case, $\Omega=B_r$.
Finally, \eqref{ex1} follows from this by monotone convergence: 
$\|\phi(|Dv|)\|_{L^{\theta}(B_r)}  = \lim_{j\to \infty} \|\phi(|Dv|)\|_{L^{\theta_j}(B_r)}$, 
cf.\ \cite[Lemma~3.1.4]{HarH_book}. 

We next prove the second claim, inequality \eqref{meanCZestimate}. 
If $\int_{B_r}\theta(x,\phi(|Dv_0|))\,dx\geq 1$,
then it follows from \eqref{ex1} by \cite[Lemma 3.2.10]{HarH_book} that 
\[
\int_{B_r}\theta(x,\phi(|Dv|))\,dx 
\leq
c\left(\int_{B_r}\theta(x,\phi(|Dv_0|))\,dx\right)^{\frac{q_1}{p_1}}
\leq 
c \kappa^{\frac{q_1}{p_1}-1}\int_{B_r}\theta(x,\phi(|Dv_0|))\,dx,
\]
which implies \eqref{meanCZestimate}. 

Now, we suppose that $\int_{B_r}\theta(x,\phi(|Dv_0|))\,dx< 1$. 
We assume first that the \aone{} inequality holds also in $[0,1]$, i.e.\ that, for some $L_1\geq 1$, 
\begin{equation}\label{exass}
\theta^+_{B_\rho}(t) \le L_1 \theta^-_{B_\rho}(t)
\quad\text{for all }\ t>0\ 
\ \text{satisfying}\ \ \theta_{B_\rho}^-(t)\in[0,|B_\rho|^{-1}],
\end{equation}
whenever $B_{\rho}\subset B_r$. Define $\theta^\pm(t):= \theta^\pm_{B_r}(t)$,
$$
M:= (\theta^-)^{-1}\bigg(\int_{B_r}\theta(x,\phi(|Dv_0|))\,dx\bigg)
\quad\text{and}\quad
\bar \theta(x,t):=\frac{\theta(x,Mt)}{\theta^-(M)}.
$$
Note that $\theta^-(M)\in [0,1]$.
Then $\bar\theta$ also satisfies \ainc{p_1} and \adec{q_1}, with 
the same constants as $\theta$. We next prove that $\bar \theta$ satisfies \azero{}. 
It is clear that $\bar\theta^-(1)= 1$. On the other hand, since 
$\theta^-(M)\in [0,1]$, we see by \eqref{exass} with $B_\rho=B_r$ that 
$\bar\theta^+(1)= \theta^+(M)/\theta^-(M)\leq L_1$. Finally we show that 
$\bar\theta$ satisfies \aone{}. Let $B_{\rho}\subset B_r$ and consider $t>0$ such 
that $\bar{\theta}^-_{B_\rho}(t)\in[1,|B_\rho|^{-1}]$. Then
$\theta^-_{B_\rho}(Mt)=\bar{\theta}^-_{B_\rho}(t) \theta^-(M) \leq |B_\rho|^{-1}$.
Therefore, in view of \eqref{exass}, we have 
$$
\bar{\theta}^+_{B_\rho}(t)
=\frac{\theta^+_{B_\rho}(Mt)}{\theta^-(M)}
\leq \frac{L_1 \theta^-_{B_\rho}(Mt)}{\theta^-(M)}
= L_1 \bar{\theta}^-_{B_\rho}(t)
$$
so that $\bar \theta$ satisfies the \aone{} condition with constant $L_1$. 

Let $m:=\phi^{-1}(M)$ and set 
$$
\bar v:=\frac{v}{m},
\quad\bar v_0:=\frac{v_0}{m},
\quad\text{and}\quad
\bar \phi(t):=\frac{\phi(mt)}{M}.
$$
Then $\bar \phi'(t)= \frac{\phi'(mt)m}{M}$ and $\bar v\in W^{1,\bar\phi}(B_r)$ is the weak solution to
$$
\div \left(
\frac{{\bar \phi}'(|D\bar v|)}{|D\bar v|}D\bar v\right)=0\quad\text{in}\ \ B_{r}\quad\text{with}\quad\bar v=\bar v_0 \ \ \text{on}\ \partial B_r.
$$
Note that $\bar\phi'$ also satisfies \inc{p-1} and \dec{q-1} with the same constant as $\phi'$.
In addition, by the definitions of $\bar \theta$, $\bar \phi$ and $M$, 
\[
\int_{B_r} \bar\theta(x,\bar\phi(|D\bar v_0|))\,dx 
=
\frac1{\theta^-(M)}\int_{B_r} \theta(x,\phi(|Dv_0|))\,dx\leq 1\ \ \Rightarrow\ \ 
\|\bar \phi(|D\bar v_0|)\|_{L^{\bar\theta}(B_r)}\leq 1.
\] 
Therefore, applying \eqref{ex1} to $(\theta,\phi,v,v_0)=(\bar\theta,\bar\phi,\bar v,\bar v_0)$, we have
\[
\|\bar\phi(|D\bar v|))\|_{L^{\bar\theta}(B_r)} \leq c \|\bar \phi(|D\bar v_0|)\|_{L^{\bar\theta}(B_r)}\leq c
\]
for some $c=c(n,p,q,p_1,q_1,L_1)\geq 1$.
Finally, this implies that 
\[
\frac1{\theta^-(M)}\int_{B_r} \theta(x,\phi(|Dv|))\,dx
=
\int_{B_r} \bar\theta(x,\bar\phi(|D \bar v|))\,dx \leq c
\]
for some $c=c(n,p,q,p_1,q_1,L_1)\geq 1$. In view of the definition of $M$, we have
\begin{equation}\label{strongmeanCZestimate}
\int_{B_r}\theta(x,\phi(|Dv|))\,dx 
\leq 
c\int_{B_r}\theta(x,\phi(|Dv_0|))\,dx
\end{equation}
in the case when \eqref{exass} holds. Note that \eqref{strongmeanCZestimate} 
is stronger than \eqref{meanCZestimate}, but requires the stronger assumption \eqref{exass}.

We return to the case that $\theta$ satisfies \aone{} with normal range
and define 
\[
\tilde\theta(x,t):=\theta(x,t)+t^{p_1}.
\]
It is easy to check that $\tilde \theta$ satisfies \azero{}, \ainc{p_1}, \adec{q_1} and 
$\theta(x,t)\leq \tilde\theta(x,t) \lesssim \theta(x,t)+1$. Let us show that $\tilde \theta$
satisfies \eqref{exass}. Fix $B_{\rho}\subset B_r$ and $t\geq 0$ satisfying  
$\tilde \theta^-_{B_\rho}(t)\in [0,|B_{\rho}|^{-1}]$. 
Then 
\[
\theta^-_{B_\rho}(t) 
=
\tilde\theta^-_{B_\rho}(t) - t^{p_1}
\leq  
|B_{\rho}|^{-1}
\]
If $\theta^-_{B_\rho}(t)\ge 1$, then
\aone{} of $\theta$ implies that 
\[
\tilde \theta^+_{B_\rho}(t) 
=\theta^+_{B_\rho}(t)+t^{p_1} 
\leq L\theta^-_{B_\rho}(t)+t^{p_1}
\leq L(\theta^-_{B_\rho}(t)+t^{p_1})
=  L \tilde \theta^-_{B_\rho}(t). 
\]
On the other hand, if $\theta^-_{B_\rho}(t)\le 1$, 
by \azero{}, \ainc{p_1}  and \adec{q_1} of $\theta$, we have $t\lesssim 1$ and then 
$\theta^+_{B_\rho}(t) \approx t^{p_1} \leq  \theta^-_{B_\rho}(t)$. Hence $\tilde \theta$ 
satisfies \eqref{exass}. Finally, since 
$\int_{B_r}\tilde\theta(x,\phi(|Dv_0|))\,dx \leq c( 1 +|B_r|)\leq \tilde c$, applying
the result \eqref{strongmeanCZestimate} for function $\tilde c^{-1}\tilde \theta(x,t)$, we obtain 
\begin{align*}
\int_{B_r}\theta(x,\phi(|Dv|))\,dx &\leq \int_{B_r}\tilde \theta(x,\phi(|Dv|))\,dx \\
&\leq  c\int_{B_r}\tilde \theta(x,\phi(|Dv_0|))\, dx\\
&\leq  c\int_{B_r}[\theta(x,\phi(|Dv_0|))+1]\, dx,
\end{align*}
which completes the proof of \eqref{meanCZestimate}.
\end{proof}

\section{Comparison results without continuity assumption}
\label{sect:comparison}

Assume that $\phi\in \Phic(\Omega)\cap C^{1}([0,\infty))$ satisfies a stronger 
version of \aone{}: there exists $L\geq 1$ and a non-decreasing continuous function 
$\omega:[0,\infty)\to[0,1]$ with $\omega(0)=0$ such that for any small $B_r\Subset \Omega$,
\begin{equation}\label{eq:aones}
\phi_{B_r}^+(t)\le L \phi_{B_r}^-(t) \quad\text{for all }\ t>0\ \ 
\text{with }\ \phi_{B_r}^-(t)\in [\omega(r),|B_r|^{-1}].
\end{equation}
Note that this condition is implied by \wMA{} with $L=3$ and $\omega=\omega_\epsilon$ for any fixed $\epsilon$.
Further, we assume that $\phi'$ satisfies \azero{} with the same constant $L\geq1$, 
as well as \inc{p-1} and \dec{q-1} for some $1<p\leq q$.

We fix $\Omega'\Subset \Omega$ and consider $B_{2r}=B_{2r}(x_0)\subset \Omega'$ with $r>0$ satisfying that
\begin{equation}\label{rcondi}
r\leq \frac{1}{2},
\quad 
\omega(2r)\leq \frac1L,
\quad |B_{2r}|
\leq 
\min\left\{\frac1{2L}, 2^{-\frac{2(1+\sigma_0)}{\sigma_0}} \left(\int_{\Omega'} \phi(x,|Du|)^{1+\sigma_0}\,dx\right)^{-\frac{2+\sigma_0}{\sigma_0}}\right\},
\end{equation}
where $\sigma_0\in(0,1)$ is given in Lemma~\ref{lemhigh}. Note that $\phi(\cdot,|Du|)\in L^{1+\sigma_0}_{\loc}(\Omega)$, see Remark~\ref{rmkRomega'}. Hence we have from H\"older's inequality and \eqref{rcondi} that
\begin{equation}\label{Dusigmale1}
\int_{B_{2r}}\phi(x,|Du|)^{1+\frac{\sigma_0}{2}}\,dx
\le |B_{2r}|\left(\fint_{B_{2r}}\phi(x,|Du|)^{1+\sigma_0}\,dx\right)^{\frac{1+\sigma_0/2}{1+\sigma_0}} 
\le \frac{1}{2},
\end{equation}
so that
\begin{equation}\label{Dusigmale2}
\int_{B_{2r}}\phi(x,|Du|)\,dx\leq \int_{B_{2r}}\phi(x,|Du|)^{1+\frac{\sigma_0}{2}}\,dx+ |B_{2r}|\leq \frac{1}{2}+\frac{1}{2} = 1.
\end{equation}
Therefore, we can take advantage of the results in Lemmas~\ref{lemhigh} and \ref{reverse}.
For convenience, we write $\phi^{\pm}(t):=\phi^{\pm}_{B_{2r}}(t)$.


\subsection*{Construction of a regularized Orlicz function}
We construct a regularized function $\tphi\in C^1([0,\infty))\cap C^2((0,\infty))$ 
with $t\tphi''(t)\approx\tphi'(t) $, which is independent of the $x$ variable 
and sufficiently close to $\phi(x_0,t)$ in a suitable range of $t$. This procedure is quite delicate since we want improved differentiability
and, moreover, want to find $\tphi$ satisfying in particular the assumptions of
Proposition~\ref{prop:thetaOK}, below. The challenge lies in ensuring that 
$\phi(x,\tphi^{-1}(t))$ satisfies \ainc{1} and \adec{\gamma} with some $\gamma>1$ for small and large values of $t$, as 
we only have the comparison property when $t$ is in some range $[t_1,t_2]$. We approach this problem by requiring $p$-growth for small and large values of $t$. 
This is counter-intuitive, because it means that the resulting function 
is neither a lower nor an upper bound of the original function, in contrast 
to estimates used in previous articles. 

We first define
\begin{equation}\label{t1t2}
B:=B_{2r}=B_{2r}(x_0),\quad t_1:= (\phi^-)^{-1}(\omega(2r)) \quad \text{and}\quad t_2:=(\phi^-)^{-1}(|B|^{-1}).
\end{equation}
Note that it follows from $\omega(2r),|B_{2r}|\le L^{-1}$ in \eqref{rcondi} and \azero{} of $\phi$ that $t_1\leq 1\leq t_2$. 
Let 
\begin{equation}\label{phis2}
\psi_{B}(t):=
\begin{cases}
a_1\, (\frac t{t_1})^{p-1}&\text{if} \ \ 0\leq t< t_1,\\
\phi'(x_0,t)&\text{if}\ \ t_1\leq t\leq t_2,\\
a_2\, (\frac t{t_2})^{p-1}&\text{if} \ \ t_2< t<\infty,
\end{cases}
\end{equation}
where the constants $a_1:=\phi'(x_0,t_1)$ and $a_2 :=\phi'(x_0,t_2)$ are chosen 
so that $\psi_B$ is continuous. 
We then define 
\[
\phi_B(t):= \int_0^t \psi_B(s)\, ds.
\]
Note that these functions depend on $B$ via the center point $x_0$ as well as the values 
$t_1$ and $t_2$.

When $t\in [t_1,t_2]$, the coincidence of derivatives implies that 
\[
\phi_B(t)-\phi(x_0,t) = \phi_B(t_1)-\phi(x_0,t_1) = \tfrac1p t_1 \phi'(x_0,t_1)-\phi(x_0,t_1)
\]
and so, using the facts that 
$\frac1q t_1 \phi'(x_0,t_1)\le \phi(x_0,t_1)\le \frac1p t_1 \phi'(x_0,t_1)$ by \inc{p} and \dec{q} as well as \eqref{eq:aones}, we find that 
\begin{equation}\label{phiB-phi}
0\le \phi_B(t)-\phi(x_0,t) \le (\tfrac qp -1) \phi(x_0,t_1) \approx \phi^-(t_1) = \omega(2r) \quad \text{for all }\ t\in[t_1,t_2].
\end{equation}

Fix $\eta \in C^\infty_0(\R)$ with $\eta\ge0$, $\supp \eta \subset (0,1)$ and $\|\eta\|_1=1$.
We define 
\begin{equation}\label{tphiphi}
\tphi(t)
:=
\int_0^\infty \phi_B(t\sigma) \eta_{r}(\sigma-1)\,d\sigma
=
\int_0^\infty \phi_B(s) \eta_{rt}(s-t)\,ds
\quad\text{where}\quad
\eta_r(t):=\tfrac1r \eta(\tfrac tr);
\end{equation}
the second expression is valid for $t>0$.
From the second formula, we see that $\tphi\in C^\infty((0,\infty))$. 

For the next proof, we recall the following elementary inequalities which 
follow by the mean value theorem for $s\mapsto(1+s)^\gamma$ on $[0,1]$: 
for $\gamma>0$ and $0<s\leq 1$,
\begin{equation}\label{Taylor}
1+\min\{1,2^{\gamma-1}\} \gamma s\leq (1+s)^\gamma \leq 1+\max\{1,2^{\gamma-1}\}\gamma s .
\end{equation}
For the functions defined above, we have the following properties. 

\begin{proposition}\label{prop:approxProp}
Let $\tilde\phi$ be from \eqref{tphiphi}. Then
\begin{enumerate}
\item
$\phi_B(t) \leq \tphi(t)\leq (1+cr)\phi_B(t)\leq c \phi_B(t)$ for all $t>0$
with $c>0$ depending only on $q$. Furthermore, 
\[
0\leq \tphi(t)-\phi(x_0,t)\leq c(r\phi^-(t) + \omega(2r)) \leq c\phi^-(t)
\quad \text{for all }\ t\in[t_1,t_2].
\]
\item
$\tphi\in C^1([0,\infty))$ and it satisfies \azero{}, \inc{p} and \dec{q} while  
$\tphi'$ satisfies \azero{}, \inc{p-1} and \dec{q-1}. In particular, 
$\tphi'(t)\approx t\tphi''(t)$ for all $t>0$.
\item
$\tphi(t)\leq c\phi(x,t)$ for all $(x,t)\in B\times [1,\infty)$, and so $\tphi(t) \le c \left(\phi(x,t) +1\right)$ for all $(x,t)\in B\times[0,\infty)$. 
\end{enumerate}
Here, the constants $c> 0$ depend only on $n$, $p$, $q$ and $L$.
\end{proposition}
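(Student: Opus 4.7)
My plan is to prove the three claims in order, exploiting the layered construction: first understand $\phi_B$ in terms of $\phi(x_0,\cdot)$ through \eqref{phiB-phi}, then understand $\tphi$ in terms of $\phi_B$ through the mollification \eqref{tphiphi}, and finally use \eqref{eq:aones} to pass between $\phi(x_0,t)$ and $\phi(x,t)$ for general $x\in B$.

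For (1), I would begin by verifying piecewise on $[0,t_1]$, $[t_1,t_2]$ and $[t_2,\infty)$ that $\psi_B$ satisfies \inc{p-1} and \dec{q-1}: the outer pieces are pure $p$-growth and the middle inherits from $\phi'(x_0,\cdot)$, while continuity at the gluing points $t_1,t_2$ is built in. By Proposition \ref{prop0}(1), this transfers \inc{p} and \dec{q} to $\phi_B$. Since $\supp \eta_r(\cdot-1)\subset[1,1+r]$ and $\phi_B$ is non-decreasing, $\tphi(t)\ge\phi_B(t)$ is immediate. For the upper bound, \dec{q} of $\phi_B$ gives $\phi_B(t\sigma)\le\sigma^q\phi_B(t)\le(1+r)^q\phi_B(t)\le(1+cr)\phi_B(t)$ on $\sigma\in[1,1+r]$ by \eqref{Taylor}. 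The second assertion of (1) then combines with \eqref{phiB-phi} and uses $\omega(2r)=\phi^-(t_1)\le\phi^-(t)$ for $t\in[t_1,t_2]$.

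For (2), smoothness of $\tphi$ on $(0,\infty)$ follows by differentiating under the integral in the second form of \eqref{tphiphi}. Continuity of $\tphi'$ at $0$ (with $\tphi'(0)=0$) follows from $\psi_B(t\sigma)\to0$ uniformly in $\sigma\in[1,1+r]$ as $t\to0^+$, using that $\psi_B$ is continuous with $\psi_B(0)=0$. The growth properties \inc{p}, \dec{q} of $\tphi$ and \inc{p-1}, \dec{q-1} of $\tphi'$ transfer from $\phi_B$ and $\psi_B$ respectively by writing, for instance, $\tphi(t)/t^p=\int[\phi_B(t\sigma)/(t\sigma)^p]\sigma^p\eta_r(\sigma-1)\,d\sigma$ and noting monotonicity of the bracket in $t$. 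Property \azero{} for $\tphi'$ comes from controlling $\psi_B$ on $[1,1+r]$ by \inc{p-1}/\dec{q-1} and using $\psi_B(1)=\phi'(x_0,1)\approx1$ (as $t_1\le 1\le t_2$ by \azero{} and \eqref{rcondi}); \azero{} for $\tphi$ then follows from Proposition \ref{prop0}(3). Finally, $t\tphi''(t)\approx\tphi'(t)$ is Remark \ref{rmkincdec} applied to $\tphi'\in C^1((0,\infty))$.

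Part (3) is the main obstacle, because we must bootstrap from comparability at the single point $x_0$ to all $x\in B$. By (1) it suffices to bound $\phi_B(t)\le c\phi(x,t)$ for $(x,t)\in B\times[1,\infty)$, and I would split at $t_2$. For $t\in[1,t_2]\subset[t_1,t_2]$, \eqref{phiB-phi} gives $\phi_B(t)\lesssim\phi(x_0,t)+\omega(2r)$; since $\phi^-(1)\ge L^{-1}\ge\omega(2r)$ by \eqref{rcondi} and \azero{}, we have $\phi^-(t)\in[\omega(2r),|B|^{-1}]$, so \eqref{eq:aones} yields $\phi(x_0,t)\le L\phi^-(t)\le L\phi(x,t)$ and $\omega(2r)\le\phi^-(t)\le\phi(x,t)$. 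For $t>t_2$, the explicit $p$-extension in \eqref{phis2} together with Proposition \ref{prop0}(2) gives $\phi_B(t)\approx\phi_B(t_2)(t/t_2)^p$; combining $\phi_B(t_2)\approx\phi(x_0,t_2)\le L\phi^-(t_2)=L|B|^{-1}\le L\phi(x,t_2)$ with \inc{p} of $\phi(x,\cdot)$, namely $\phi(x,t)\ge(t/t_2)^p\phi(x,t_2)$, closes the estimate. The final $+1$ inequality then follows from $\tphi(t)\le c\phi_B(1)\le c$ for $t\in[0,1]$ using (1) and \azero{}.
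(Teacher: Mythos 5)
Your proposal is correct and follows essentially the same route as the paper's proof: the same mollification bounds $\phi_B(t)\le\tphi(t)\le\phi_B((1+r)t)\le(1+cr)\phi_B(t)$ for (1), the same differentiated integral representation and transfer of \inc{}/\dec{}/\azero{} from $\psi_B$ for (2), and the same split at $t_2$ with \eqref{eq:aones} for (3). The only cosmetic difference is in the range $t>t_2$, where you compare the integrated functions directly via $\phi_B(t)\approx\phi_B(t_2)(t/t_2)^p$ and \inc{p} of $\phi(x,\cdot)$, whereas the paper compares the derivatives ($\psi_B(\tau)\lesssim\phi'(x,\tau)$) and integrates; both are equally valid.
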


\begin{proof}
It follows from the construction that $\psi_B$ satisfies \inc{p-1}, \dec{q-1} and \azero{}.
By Proposition~\ref{prop0}, $\phi$ and $\phi_B$ satisfy \inc{p}, \dec{q} and \azero{}. 

(1) We note that $\eta_{r}(\sigma-1)$ is only nonzero when $\sigma-1\in [0, r]$. 
As $\phi_B$ is increasing and $\eta \geq 0$, we obtain that 
\[
\tphi(t)
=
\int_0^\infty \phi_B(t\sigma) \eta_{r}(\sigma-1)\,d\sigma
\le
\int_1^{1+r} \phi_B((1+r)t) \eta_{r}(\sigma-1)\,d\sigma
= 
\phi_B((1+r)t)
\]
since $\|\eta_{r}\|_1=1$. Similarly, we obtain that 
$\tphi(t) \ge \phi_B(t)$. In addition, 
by \dec{q} of $\phi_B$ and \eqref{Taylor}, we have 
\[
\tphi(t)\leq \phi_B((1+r)t) \leq (1+r)^{q}\phi_B(t) \leq (1+2^{q-1}qr)\phi_B(t)\quad \text{for all } t\ge 0.
\]
By this inequality and \eqref{phiB-phi}, we estimate 
\begin{align*}
\tphi(t)-\phi(x_0,t)
&\le
(1+2^{q-1}qr)\phi_B(t) - \phi_B(t) + c\omega(2r)
=
2^{q-1}qr\phi_B(t) + c\omega(2r) \\
& \lesssim r\phi^-(t) + \omega(2r)
\end{align*}
for all $t\in[t_1,t_2]$, where we also used \eqref{phiB-phi} and \eqref{eq:aones}
to estimate $\phi_B$ in the last step. 
In addition, we know that $\omega(2r)=\phi^-(t_1)\leq \phi^-(t)$ for all $t\in[t_1,t_2]$.
The lower bound follows from $\tphi(t)\ge \phi_B(t)\ge \phi(x_0,t)$.

(2) 
The claims for $\tphi'$ will be derived based on the equality
\begin{equation}\label{tphi'}
\tphi'(t) 
= 
\int_0^\infty \sigma \psi_B(t\sigma) \eta_{r}(\sigma-1)\,d\sigma\quad \text{for }\ t\ge 0
\end{equation}
which is obtained by differentiating under the integral sign. The continuity of $\psi_B$ implies that $\tphi'\in C([0,\infty))$ and so $\tphi\in C^1([0,\infty))$. As in (1), 
since the support of $\eta_r$ is in $[0,r]$, $\|\eta_r\|_1=1$, and since $\psi_B$ is increasing and satisfies \dec{q-1}, we see that 
\[
\psi_B(t)\leq \tphi'(t) \leq (1+r) \psi_B((1+r)t) \leq (1+r)^q \psi_B(t)\leq 2^q \psi_B(t)
\quad\text{for all }\ t>0,
\] 
that is, $\psi_B\approx \tphi'$. Hence we have $\tphi'(1)\approx \psi_B(1)=\phi'(x_0,1)$, which implies that $\tphi'$ satisfies \azero{}. From the expression for $\tphi'$, we also see, since $\psi_B$ satisfies \inc{p-1}, that
\[
\tphi'(\lambda t) 
= \int_0^\infty \sigma \psi_B(\lambda t\sigma) \eta_{r}(\sigma-1)\,d\sigma
\ge \lambda^{p-1}\int_0^\infty \sigma \psi_B(t\sigma) \eta_{r}(\sigma-1)\,d\sigma
= \lambda^{p-1}\tphi'(t)
\]
for $\lambda\geq 1$ and $t>0$. This yields \inc{p-1} of $\tphi'$. 
Similarly we prove that $\tphi'$ satisfies \dec{q-1}. The properties for $\tphi$ follow 
by Proposition~\ref{prop0}.

(3) 
Fix $x\in B$. When $t\in [1,t_2]$, we see by part (1) and \eqref{eq:aones} that 
$\tphi(t)\lesssim \phi^-(t) \le \phi(x,t)$. For $t\geq t_2$, we observe that 
\[
\psi_B(t) 
= 
\phi'(x_0,t_2) \Big(\frac t {t_2}\Big)^{p-1}
\approx
\phi'(x,t_2) \Big(\frac t {t_2}\Big)^{p-1}
\le 
\phi'(x,t),
\]
since $\phi'(x_0,t_2)\approx \phi(x_0,t_2)/t_2 \approx \phi(x,t_2)/t_2 \approx \phi'(x,t_2)$ by \eqref{eq:aones} and $\phi'$ satisfies \inc{p-1}. Then 
\[
\phi_B(t) = 
\phi_B(t_2) + \int_{t_2}^t \psi_B(\tau) \, d\tau 
\lesssim
\phi(x,t_2) + \int_{t_2}^t \phi'(x,\tau) \, d\tau 
=\phi(x,t). \qedhere
\]
\end{proof}

The next result shows the strength of the approach with \ainc{} and \adec{}, 
since it would be difficult to construct an approximating 
function to guarantee \inc{} and \dec{}. 

\begin{proposition}\label{prop:thetaOK} 
For $\tphi$ defined in \eqref{tphiphi} and any $\sigma\in(0,1)$, set 
\[
\theta(x,t):=[\phi(x,\tphi^{-1}(t))]^{1+\sigma}.
\]
Then $\theta\in\Phiw(B_r)$ satisfies \azero{}, \ainc{1+\sigma}, \adec{q(1+\sigma)/p} 
and \aone{}. Here the constants depend only on 
$n$, $p$, $q$ and $L$ (from the assumptions on $\phi$) and are independent of $\sigma$.
\end{proposition}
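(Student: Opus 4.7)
Since $\theta(x,t)=\phi(x,\tphi^{-1}(t))^{1+\sigma}$, every one of the four properties to be proved for $\theta$ can be rephrased as a property of the scalar ratios $\phi(x,s)/\tphi(s)$ or $\phi(x,s)/\tphi(s)^{q/p}$ at $s=\tphi^{-1}(t)$. My plan is to use the three-part description of $\tphi$ already established in the paper: by Proposition~\ref{prop:approxProp}(1) together with \eqref{eq:aones}, one has $\tphi(s)\approx \phi(x,s)$ uniformly for $(x,s)\in B\times[t_1,t_2]$; the explicit construction \eqref{phis2}--\eqref{tphiphi} yields $\tphi(s)\approx \phi(x_0,t_1)(s/t_1)^p$ on $[0,t_1]$ and $\tphi(s)\approx \phi(x_0,t_2)(s/t_2)^p$ on $[t_2,\infty)$, all constants depending only on $n,p,q,L$. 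I would analyse each ratio on the three sub-intervals $(0,t_1]$, $[t_1,t_2]$, $[t_2,\infty)$ and glue via continuity at the endpoints.

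For \azero, since $\tphi$ satisfies \azero, \inc{p} and \dec{q} (Proposition~\ref{prop:approxProp}(2)), one obtains $\tphi^{-1}(1)\approx 1$, after which \azero, \inc{p} and \dec{q} of $\phi$ yield $\phi(x,\tphi^{-1}(1))\approx 1$; raising to the power $1+\sigma\in(1,2)$ preserves bounds uniformly in $\sigma$. For \aone, the key point is that $\tphi$ is independent of $x$, so $\theta^\pm_{B'}(t)=[\phi^\pm_{B'}(\tphi^{-1}(t))]^{1+\sigma}$. Hence the range $\theta^-_{B'}(t)\in [1,|B'|^{-1}]$ corresponds, with $s:=\tphi^{-1}(t)$, to $\phi^-_{B'}(s)\in [1,|B'|^{-1/(1+\sigma)}]\subset [\omega(r'),|B'|^{-1}]$, which is precisely the range covered by the hypothesis \eqref{eq:aones}. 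Applying \eqref{eq:aones} and raising to $1+\sigma$ delivers \aone of $\theta$ with constant $L^{1+\sigma}\leq L^2$, independent of $\sigma$.

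The condition \adec{q(1+\sigma)/p} follows directly from composition: $\phi(x,\cdot)^{1+\sigma}$ satisfies \adec{q(1+\sigma)} by \adec{q} of $\phi$, while $\tphi^{-1}$ satisfies \dec{1/p} (from \inc{p} of $\tphi$ via Proposition~\ref{prop00}(4)), so $\theta=\phi(x,\cdot)^{1+\sigma}\circ \tphi^{-1}$ satisfies \adec{q(1+\sigma)/p} with a constant bounded uniformly for $\sigma\in(0,1)$. The genuinely non-trivial property is \ainc{1+\sigma}, because the analogous composition argument delivers only the weaker \ainc{p(1+\sigma)/q}. To close this gap I would prove that $s\mapsto \phi(x,s)/\tphi(s)$ is almost non-decreasing on $(0,\infty)$ uniformly in $x\in B$, with constants depending only on $n,p,q,L$; then after substitution the identity $\theta(x,t)/t^{1+\sigma}=[\phi(x,s)/\tphi(s)]^{1+\sigma}$ transfers this to almost-monotonicity of $\theta(x,t)/t^{1+\sigma}$ with constant bounded as $\sigma\in(0,1)$ varies. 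On $[t_1,t_2]$ the ratio is $\approx 1$ by the approximation property. On $[0,t_1]$, the explicit formula gives $\tphi(s_1)/\tphi(s_2)\approx (s_1/s_2)^p$ for $s_1<s_2\leq t_1$, which combined with \inc{p} of $\phi$ yields $[\phi(x,s_2)/\tphi(s_2)]/[\phi(x,s_1)/\tphi(s_1)]\geq c>0$; the interval $[t_2,\infty)$ is handled identically. Gluing across $s=t_1,t_2$ is automatic because the ratio is comparable to $1$ from both sides at each endpoint.

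The main obstacle is precisely this last step, the proof of \ainc{1+\sigma}: promoting the naive composition exponent $p(1+\sigma)/q$ to the stronger $1+\sigma$ requires using both the approximation $\tphi\approx \phi(x,\cdot)$ inside the critical range and the exact $p$-homogeneity of $\tphi$ outside it. The delicate book-keeping is to track the constants through each subinterval and the gluing so that they remain independent of $\sigma$; it is this balance that explains why the exponent pair $\bigl(1+\sigma,\tfrac{q(1+\sigma)}{p}\bigr)$ -- and not a sharper one -- appears in the conclusion.
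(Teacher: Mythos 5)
Your proposal is correct and follows essentially the same route as the paper: the key step, \ainc{1+\sigma}, is obtained exactly as in the paper's proof by showing that $s\mapsto \phi(x,s)/\tphi(s)$ is almost increasing via the three-interval split (exact $p$-power behaviour of $\phi_B$ outside $[t_1,t_2]$ combined with \inc{p} of $\phi$, and $\tphi\approx\phi(x,\cdot)$ inside), and the \azero{} and \aone{} arguments coincide with the paper's. The only minor variation is that you derive \adec{q(1+\sigma)/p} by directly composing \dec{q} of $\phi$ with \dec{1/p} of $\tphi^{-1}$, which is a clean and valid shortcut for the part the paper dismisses as ``analogous.''
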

\begin{proof}
That $\theta\in \Phiw(B_r)$ is clear once we show 
\ainc{1}. As $\phi$ and $\tphi$ satisfy \azero{}, 
so does $\theta$. Now we prove that $\theta$ satisfies \ainc{1+\sigma}, which holds 
if $t\mapsto \phi(x,\tphi^{-1}(t))$ satisfies 
\ainc{1}. Let $t>s>0$, $\tphi(\tau)=t$ and $\tphi(\sigma)=s$. Then 
\[
L\frac{\phi(x,\tphi^{-1}(t))}t \ge \frac{\phi(x,\tphi^{-1}(s))}s
\quad\Longleftrightarrow\quad
L\frac{\phi(x,\tau)}{\tphi(\tau)}\ge \frac{\phi(x,\varsigma)}{\tphi(\varsigma)}.
\]
By Proposition~\ref{prop:approxProp} and Remark~\ref{rmkincdec}, 
we have $\tphi(t)\approx \phi_B(t)$, $\phi(x,t)\approx t\phi'(x,t)$ and $\phi_B(t)\approx t \psi_B(t)$ for all $(x,t)\in B\times(0,\infty)$. Therefore, it suffices to show that $t\mapsto \frac{\phi'(x,t)}{\psi_B(t)}$ is almost increasing. 
Let $t_1$ and $t_2$ be from in \eqref{t1t2}. Then by the definition of $\psi_B$ in \eqref{phis2}
we see that $t\mapsto \frac{\phi'(x,t)}{\psi_B(t)}$ is non-decreasing on 
$(0,t_1]\cup[t_2,\infty)$, since $\phi'$ satisfies \inc{p-1}. 
By \eqref{eq:aones} with the fact that $t\phi'(x,t)\approx \phi(x,t)$, we have 
$\frac{\phi'(x,t)}{\psi_B(t)}\approx1$ in $[t_1,t_2]$. Therefore, we see that $t\mapsto \frac{\phi'(x,t)}{\psi_B(t)}$ is almost increasing. The property \adec{q(1+\sigma)/p} is proved analogously.

Finally, we show that $\theta$ satisfies \aone{}.
Let $B_{\rho}\subset B_r$, and assume that 
$\theta^-_{B_\rho}(t)\in [1,|B_{\rho}|^{-1}]$. 
Then 
\[
\phi^-_{B_\rho}(\tphi^{-1}(t)) 
=
\theta^-_{B_\rho}(t)^{1/(1+\sigma)}
\in [1,|B_{\rho}|^{-1/(1+\sigma)}]
\subset [1,|B_{\rho}|^{-1}].
\]
Therefore, \aone{} of $\phi$ implies that 
\[
\theta^+_{B_\rho}(t) 
=[\phi^+_{B_\rho}(\tphi^{-1}(t))]^{1+\sigma} 
\leq [L\phi^-_{B_\rho}(\tphi^{-1}(t))]^{1+\sigma}
\le L^2 \theta^-_{B_\rho}(t) 
\]
and so $\theta$ satisfies \aone{}.
\end{proof}


\subsection*{Comparison estimates}

Let $\tphi:[0,\infty)\to[0,\infty)$ be the function constructed in the previous subsection. 
We then consider the minimizer $v\in W^{1,\tphi}(B_r)$ of
\begin{equation}\label{functionaltphiv}
\int_{B_r} \tphi(|Dv|)\, dx 
\quad\text{with}\quad v=u \ \ \text{on}\ \partial B_r,
\end{equation}
where $u\in W^{1,\phi}_{\loc}(\Omega)$ is a minimizer of \eqref {mainfunctional}, and 
derive a comparison estimate between the gradients of $u$ and $v$. We note from Proposition~\ref{prop:approxProp}(3) that $u\in W^{1,\tphi}(B_r)$, so it is an appropriate boundary-value function and thus there exists a unique minimizer of \eqref{functionaltphiv}. 
The minimizer $v$ is also a weak solution to
\begin{equation}\label{eqtphiv}
\div \left(
\frac{\tphi'(|Dv|)}{|Dv|}Dv\right)=0\quad\text{in}\ \ B_{r}\quad\text{with}\quad v=u \ \ \text{on}\ \partial B_r.
\end{equation}
Before stating the main comparison result, we observe the following reverse H\"older type estimate for $Du$ and 
Calder\'on--Zygmund type estimate for the problem \eqref{functionaltphiv}.

\begin{lemma}\label{lemreverseDu}
Let $u\in W^{1,\phi}_{\loc}(\Omega)$ be a local minimizer of \eqref{mainfunctional}
and $v\in W^{1,\tphi}(B_r)$ be the minimizer of \eqref{functionaltphiv}, where $B_{2r}\Subset\Omega$ with $r>0$ satisfying \eqref{rcondi}
and $\tphi$ is defined in \eqref{tphiphi}. Then 
\begin{equation}\label{reverseDutphi}
\left(\fint_{B_r}\phi(x,|Du|)^{1+\sigma_0}\,dx\right)^{\frac{1}{1+\sigma_0}} 
\le c\left[
\tphi \left(\fint_{B_{2r}}|Du|\,dx\right)+1\right]
\end{equation}
and
\begin{equation}\label{CZuv}\begin{aligned}
\fint_{B_r}\phi(x,|Dv|)\,dx 
&\leq \left(\fint_{B_r}\phi(x,|Dv|)^{1+\frac{\sigma_0}{2}}\,dx\right)^{\frac{1}{1+\sigma_0/2}}\\
&\le 
c \left(\fint_{B_r}\phi(x,|Du|)^{1+\frac{\sigma_0}{2}}\,dx+1\right)^{\frac{1}{1+\sigma_0/2}}
\end{aligned}\end{equation}
Moreover,
\begin{equation}\label{DvDu}
\fint_{B_r}|Dv|\,dx 
\le c\left(
\fint_{B_{2r}}|Du|\,dx+1\right).
\end{equation}
Here constants $c\geq 1$ depend on $n$, $p$, $q$ and $L$.
\end{lemma}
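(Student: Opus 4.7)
The plan is to establish the three estimates in sequence, using Lemmas~\ref{reverse} and~\ref{lemCZ} together with the properties of $\tphi$ from Propositions~\ref{prop:approxProp} and~\ref{prop:thetaOK}. For \eqref{reverseDutphi}, I would apply the reverse H\"older estimate of Lemma~\ref{reverse} to $u$, obtaining
\[
\bigg(\fint_{B_r}\phi(x,|Du|)^{1+\sigma_0}\,dx\bigg)^{\frac{1}{1+\sigma_0}}
\leq c\bigg(\phi^-\Big(\fint_{B_{2r}}|Du|\,dx\Big)+1\bigg),
\]
and then replace $\phi^-$ with $\tphi$ on the right. Setting $T:=\fint_{B_{2r}}|Du|\,dx$, a Jensen-type inequality (via \ainc{1} of $\phi^-$) together with \eqref{Dusigmale2} gives $\phi^-(T)\lesssim |B_{2r}|^{-1}$, which via \adec{1/p} of $(\phi^-)^{-1}$ forces $T\lesssim t_2$. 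A case split then handles the three ranges: for $T\le t_1$, $\phi^-(T)\le \phi^-(t_1)=\omega(2r)\le 1$; for $T\in[t_1,t_2]$, Proposition~\ref{prop:approxProp}(1) gives $\phi^-(T)\le \phi(x_0,T)\le \tphi(T)$; and for $T\in(t_2,ct_2]$, \adec{q} of $\phi^-$ combined with $\tphi(T)\ge \tphi(t_2)\ge \phi^-(t_2)$ yields $\phi^-(T)\lesssim \phi^-(t_2)\le \tphi(T)$.

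For \eqref{CZuv}, the first inequality is Jensen's inequality. For the second, I would apply \eqref{meanCZestimate} from Lemma~\ref{lemCZ} to problem \eqref{eqtphiv} (with $\tphi$ playing the role of $\phi$ in the lemma, which is legal by Proposition~\ref{prop:approxProp}(2), and with boundary datum $v_0=u$), using the test function
\[
\theta(x,t):=\big[\phi(x,\tphi^{-1}(t))\big]^{1+\sigma_0/2}.
\]
Proposition~\ref{prop:thetaOK} (taken with $\sigma=\sigma_0/2$) ensures that $\theta$ satisfies \azero{}, \aone{}, \ainc{1+\sigma_0/2} and \adec{q(1+\sigma_0/2)/p} with constants depending only on $n,p,q,L$. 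The crucial observation is that $\theta(x,\tphi(s))=\phi(x,s)^{1+\sigma_0/2}$, so \eqref{Dusigmale1} yields $\int_{B_r}\theta(x,\tphi(|Du|))\,dx\leq \tfrac12$, which makes $\kappa=\tfrac12$ an admissible absorption constant. Inserting this in \eqref{meanCZestimate} and raising to the power $1/(1+\sigma_0/2)$ delivers \eqref{CZuv}.

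For \eqref{DvDu}, I would chain the previous two estimates. Jensen's inequality applied to $s\mapsto s^{(1+\sigma_0)/(1+\sigma_0/2)}$ yields
\[
\bigg(\fint_{B_r}\phi(x,|Du|)^{1+\sigma_0/2}\,dx\bigg)^{\frac{1}{1+\sigma_0/2}}
\le \bigg(\fint_{B_r}\phi(x,|Du|)^{1+\sigma_0}\,dx\bigg)^{\frac{1}{1+\sigma_0}},
\]
so combining \eqref{CZuv} with \eqref{reverseDutphi} gives $\fint_{B_r}\phi(x,|Dv|)\,dx\leq c(\tphi(T)+1)$. Proposition~\ref{prop:approxProp}(3) upgrades this to $\fint_{B_r}\tphi(|Dv|)\,dx\leq c(\tphi(T)+1)$, and convexity of $\tphi$ combined with Jensen gives $\tphi(\fint_{B_r}|Dv|\,dx)\leq c(\tphi(T)+1)$. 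Inverting using $\tphi^{-1}(\mu s)\leq \mu^{1/p}\tphi^{-1}(s)$ for $\mu\geq 1$ (a consequence of \inc{p} from Proposition~\ref{prop:approxProp}(2)) and handling the ``$+1$'' via the cases $\tphi(T)\ge 1$ versus $\tphi(T)\le 1$ concludes the proof.

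I expect the delicate step to be \eqref{CZuv}: the exponent $1+\sigma_0/2$ (rather than $1+\sigma_0$) is forced by the need to absorb the prefactor $\kappa^{q_1/p_1-1}+1$ in \eqref{meanCZestimate} via the smallness \eqref{Dusigmale1}, and the particular choice $\theta=\phi(x,\tphi^{-1}(\cdot))^{1+\sigma_0/2}$ is what makes the composition $\theta(x,\tphi(\cdot))$ collapse to $\phi(x,\cdot)^{1+\sigma_0/2}$, while the verification of \aone{} for this $\theta$ via Proposition~\ref{prop:thetaOK} is where the careful construction of $\tphi$ bears fruit.
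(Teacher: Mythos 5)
Your proof is correct and follows essentially the same route as the paper: Lemma~\ref{reverse} for \eqref{reverseDutphi} with a comparison of $\phi^-$ to $\tphi$ via Proposition~\ref{prop:approxProp}(1), the weight $\theta(x,t)=[\phi(x,\tphi^{-1}(t))]^{1+\sigma_0/2}$ together with Lemma~\ref{lemCZ} (legitimized by Proposition~\ref{prop:thetaOK} and the smallness \eqref{Dusigmale1}) for \eqref{CZuv}, and Jensen with Proposition~\ref{prop:approxProp}(3) plus inversion of $\tphi$ for \eqref{DvDu}. The only cosmetic differences are that you spell out a three-way case split (including the range $T\in(t_2,ct_2]$) where the paper compresses to $t_0\le 1$ versus $1\le t_0\lesssim t_2$, you take $\kappa=\tfrac12$ rather than $\kappa=1$ in \eqref{meanCZestimate}, and you resolve the ``$+1$'' in \eqref{DvDu} by a case distinction rather than the paper's absorption of ``$+1$'' inside $\tphi$ via \adec{q}; these are all equivalent.
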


\begin{proof}
We first prove \eqref{reverseDutphi}. We note that $u$ satisfies the 
reverse H\"older inequality \eqref{high0} for some $c_1=c_1(n,p,q,L)\geq 1$. Then by Lemma~\ref{reverse}, we have
$$
\left(\fint_{B_r}\phi(x,|Du|)^{1+\sigma_0}\,dx\right)^{\frac{1}{1+\sigma_0}} \lesssim \phi^- \left(\fint_{B_{2r}}|Du|\,dx\right)+1\leq \phi(x_0,t_0)+1,
$$
where $t_0:=\fint_{B_{2r}}|Du|\,dx$. This and \azero{} imply that 
\eqref{reverseDutphi} holds when $t_0\leq 1$; we therefore assume that $t_0\geq 1$. 
By Jensen's inequality and \eqref{Dusigmale2},
$$
1\leq t_0\lesssim (\phi^{-})^{-1}\left(\fint_{B_{2r}}\phi^-(|Du|)\,dx\right)
\leq (\phi^{-})^{-1}\left(|B_{2r}|^{-1}\right)= t_2,
$$ 
where $t_2$ is defined in \eqref{t1t2}. Therefore, it follows from 
Proposition~\ref{prop:approxProp}(1) that
$$
\left(\fint_{B_r}\phi(x,|Du|)^{1+\sigma_0}\,dx\right)^{\frac{1}{1+\sigma_0}} 
\lesssim \phi(x_0,t_0) \lesssim \tphi(t_0)= \tphi \left(\fint_{B_{2r}}|Du|\,dx\right).
$$

As for \eqref{CZuv}, we only prove the second inequality, since the first inequality directly follows from H\"older's inequality. Let $\theta\in \Phiw(B_r)$ be from Proposition~\ref{prop:thetaOK} with $\sigma=\sigma_0/2$. By the proposition, we may apply 
Lemma~\ref{lemCZ} with $v_0=u$ and $\kappa=1$ (see 
\eqref{Dusigmale1}) to conclude that 
\begin{align*}
\fint_{B_r}\phi(x,|Dv|)^{1+\frac{\sigma_0}{2}}\,dx& = \fint_{B_r}\theta(x,\tphi(|Dv|))\,dx\\
&\lesssim \fint_{B_r} [\theta(x,\tphi(|Du|))+1]\,dx 
\approx \fint_{B_r}[\phi(x,|Du|) + 1]^{1+\frac{\sigma_0}{2}}\,dx.
\end{align*}

By Jensen's inequality, Proposition~\ref{prop:approxProp}(3), 
\eqref{CZuv} and \eqref{reverseDutphi}
\[
\tphi\bigg(\fint_{B_r}|Dv|\,dx\bigg)
\le\fint_{B_r}\tphi(|Dv|)\,dx
\lesssim\fint_{B_r}[\phi(x,|Dv|) + 1]\,dx
\lesssim\tphi \left(\fint_{B_{2r}}|Du|\,dx\right) +1.
\]
Then \eqref{DvDu} follows when we apply $\tphi^{-1}$ to both sides and use \adec{q} 
to move ``$+1$'' inside $\tphi$.
\end{proof}


\section{Comparison results with continuity assumption}\label{sect:MA}

Assume that $\phi\in \Phic(\Omega)\cap C^{1}([0,\infty))$ satisfies \wMA{}, 
in addition to the assumptions in the beginning of the previous section, i.e.\ \azero{} 
with constant $L\geq1$, as well as \inc{p-1} and \dec{q-1} for some $1<p\leq q$. 
At this stage, we fix 
\begin{equation}\label{epsilon0}
\epsilon_0:= \frac{\sigma_0}{2(2+\sigma_0)}
\end{equation}
where $\sigma_0\in(0,1)$ is determined in Lemma~\ref{lemhigh}. 
We will use \wMA{} for $\epsilon=\epsilon_0$, which fixes $\omega$ in that condition. 
We take $r$ so small that \eqref{rcondi} holds for this $\omega$. 
Now we derive a gradient comparison estimate between $u$ and $v$.

\begin{lemma} \label{lemcom}
Let $u\in W^{1,\phi}_{\loc}(\Omega)$ be a local minimizer of \eqref{mainfunctional}
and $v\in W^{1,\tphi}(B_r)$ be the minimizer of \eqref{functionaltphiv}, where $B_{2r}\Subset\Omega$ with $r>0$ satisfying \eqref{rcondi}
and $\tphi$ is defined in \eqref{tphiphi}. Then 
there exist $\gamma=\gamma(n,p,q,L)\in(0,1)$ and $c=c(n,p,q,L)\geq 1$ such that 
\[
\fint_{B_r}\tphi''(|Du|+|Dv|)|Du-Dv|^2\,dx \leq c\Big(\omega(2r)^\frac{p}{q} + r^{\gamma}\Big)\left(\tphi\left(\fint_{B_{2r}}|Du|\,dx \right)+1\right).
\]
\end{lemma}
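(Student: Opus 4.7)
\emph{First step: reduce to an energy-difference estimate.} I would apply Proposition~\ref{prop000}(2) pointwise to $\tphi$, which is legitimate since $\tphi\in C^2((0,\infty))$ and $t\tphi''(t)\approx \tphi'(t)$ by Proposition~\ref{prop:approxProp}(2). Combined with the Euler--Lagrange equation \eqref{eqtphiv} tested against the admissible function $u-v\in W^{1,\tphi}_0(B_r)$ (that $u\in W^{1,\tphi}(B_r)$ follows from Proposition~\ref{prop:approxProp}(3)), this should yield
\[
\int_{B_r}\tphi''(|Du|+|Dv|)|Du-Dv|^2\,dx
\lesssim
\int_{B_r}\bigl[\tphi(|Du|)-\tphi(|Dv|)\bigr]\,dx.
\]
To control the right-hand side I would insert $\pm\phi(x,\cdot)$ and exploit the minimality of $u$ for the $\phi$-energy against the admissible competitor $v$ (valid since $v\in W^{1,\phi}(B_r)$ by \eqref{CZuv}), so that the $\phi$-$\phi$ term is nonpositive. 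This leaves
\[
\int_{B_r}\bigl[\tphi(|Du|)-\tphi(|Dv|)\bigr]\,dx
\le
\int_{B_r}\bigl|\tphi(|Du|)-\phi(x,|Du|)\bigr|\,dx
+\int_{B_r}\bigl|\phi(x,|Dv|)-\tphi(|Dv|)\bigr|\,dx.
\]

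\emph{Second step: a three-zone pointwise comparison of $\tphi$ and $\phi$.} Writing $w$ for either $Du$ or $Dv$, the plan is to split $B_r$ according to the value of $|w|$ using the threshold $\tilde t_2:=(\phi^-)^{-1}(|B|^{-1+\epsilon_0})$, which satisfies $t_1\le\tilde t_2\le t_2$ and is precisely the largest $t$ for which \wMA{} with $\epsilon=\epsilon_0$ still applies. On $\{|w|\le t_1\}$, both $\tphi(|w|)$ and $\phi(x,|w|)$ are $\lesssim\omega(2r)$, using Proposition~\ref{prop:approxProp}(1) together with \wMA{} applied at $t=t_1$. On the middle zone $\{t_1<|w|\le\tilde t_2\}$, combining Proposition~\ref{prop:approxProp}(1) with \wMA{} produces $|\tphi(|w|)-\phi(x,|w|)|\lesssim (r+\omega(2r))\phi^-(|w|)+\omega(2r)$. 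Finally, on the tail $\{|w|>\tilde t_2\}$, Proposition~\ref{prop:approxProp}(3) gives the rough bound $|\tphi(|w|)-\phi(x,|w|)|\lesssim \phi(x,|w|)+1$.

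\emph{Third step: integration and higher integrability for the tail.} The first two zones integrate to $(r+\omega(2r))\int\phi(x,|w|)\,dx+\omega(2r)|B_r|$, which after dividing by $|B_r|$ and invoking the reverse-H\"older estimates \eqref{reverseDutphi} and \eqref{CZuv} becomes $\lesssim(r+\omega(2r))(\tphi(A)+1)$, where $A:=\fint_{B_{2r}}|Du|\,dx$. For the tail zone I would combine H\"older's inequality with exponents $1+\sigma_0$ and $(1+\sigma_0)/\sigma_0$, the Chebyshev bound $|\{|w|>\tilde t_2\}|\le |B|^{(1-\epsilon_0)(1+\sigma_0)}\int\phi(x,|w|)^{1+\sigma_0}\,dx$, and the $L^{1+\sigma_0}$ reverse-H\"older estimate from Lemma~\ref{lemreverseDu}, producing a bound of the form $|B|^{\sigma_0(1-\epsilon_0)}(\tphi(A)+1)^{1+\sigma_0}$ after dividing by $|B_r|$. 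The a priori size control on $A$ provided by \eqref{Dusigmale1}--\eqref{Dusigmale2} allows the extra $(\tphi(A)+1)^{\sigma_0}$ factor to be absorbed into a negative power of $|B|$, and the precise choice $\epsilon_0=\sigma_0/(2(2+\sigma_0))$ in \eqref{epsilon0} is tailored so that the net exponent of $|B|\approx r^n$ stays strictly positive, yielding an $r^\gamma(\tphi(A)+1)$ contribution.

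\emph{Conclusion and main obstacle.} Collecting the three zones gives $\lesssim(r+\omega(2r)+r^\gamma)(\tphi(A)+1)$; since $\omega(2r)\le 1$ and $p\le q$ force $\omega(2r)\le\omega(2r)^{p/q}$, and $r\lesssim r^\gamma$ after taking $\gamma$ smaller, one reaches the stated form $(\omega(2r)^{p/q}+r^\gamma)(\tphi(A)+1)$. The delicate point is entirely the tail analysis: two successive H\"older/reverse-H\"older applications must conspire, via the exact exponent \eqref{epsilon0}, to leave a positive power of $r$; a secondary subtlety is the small-gradient zone $\{|w|<t_1\}$, on which neither the pointwise bound \eqref{phiB-phi} nor \wMA{} applies directly and one must instead exploit that $\phi^+(t_1)\approx\omega(2r)$ through \wMA{} at the single threshold $t=t_1$.
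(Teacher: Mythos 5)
Your proposal reproduces the paper's proof step by step: the reduction to the energy difference via Proposition~\ref{prop000}(2), testing \eqref{eqtphiv} against $u-v$ and invoking the minimality of $u$; the three-zone split according to whether $\phi^-_{B_{2r}}(|w|)$ lies in $[0,\omega(2r)]$, $(\omega(2r),|B_{2r}|^{-1+\epsilon_0}]$ or above; and closing with Lemma~\ref{lemreverseDu}. One calibration point in the tail deserves care: you pitch the Chebyshev and H\"older steps at the $L^{1+\sigma_0}$ level and invoke the $L^{1+\sigma_0}$ reverse-H\"older bound \eqref{reverseDutphi}, but that bound is for $Du$ only; for $w=Dv$ the Calder\'on--Zygmund estimate \eqref{CZuv} delivers only $\phi(x,|Dv|)\in L^{1+\sigma_0/2}$, so the exponent there must be $1+\sigma_0/2$ (this is precisely why $\epsilon_0$ is chosen as in \eqref{epsilon0}). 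The paper sidesteps the issue by inserting the factor $\big[|B_{2r}|^{1-\epsilon_0}\phi^-(|Dv|)\big]^{\sigma_0/2}$ pointwise, which treats $Du$ and $Dv$ uniformly at the $1+\sigma_0/2$ level. In the small zone your observation that \wMA{} at $t=t_1$ directly yields $\phi(x,|w|)\lesssim\omega(2r)$ is correct and slightly sharper than the paper's route (which uses only \azero{}, \inc{p}, \dec{q} to get $|Dv|\lesssim\omega(2r)^{1/q}$ and hence $\omega(2r)^{p/q}$, the exponent that appears in the statement); either works.
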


\begin{proof} 
By Proposition~\ref{prop:approxProp}(3) and Lemma~\ref{lemreverseDu}, we see that $u\in W^{1,\tphi}(B_r)$ and $v\in W^{1,\phi}(B_r)$. By Proposition~\ref{prop000}(2),
\[
\tphi''(|Du|+|Dv|)|Du-Dv|^2\lesssim \tphi(|Du|)-\tphi(|Dv|) - \frac{\tphi'(|Dv|)}{|Dv|} Dv \cdot (Du-Dv).
\]
Since $u-v\in W^{1,\tphi}_0(B_r)$ and $v$ is a weak solution to \eqref{eqtphiv}, 
\[
\begin{aligned}
&\fint_{B_r}\tphi''(|Du|+|Dv|)|Du-Dv|^2\,dx \\
&\quad\lesssim 
\fint_{B_r}[\tphi(|Du|)- \tphi(|Dv|)]\,dx - 
\underbrace{\fint_{B_r} \tphi'(|Dv|)\frac{Dv}{|Dv|}\cdot(Du-Dv)\,dx}_{=0 \text{ by \eqref{eqtphiv}}}\\
&\quad =
\fint_{B_r}[\tphi(|Du|)- \phi(x,|Du|)] + 
[\phi(x,|Du|)- \phi(x,|Dv|)] + [\phi(x,|Dv|)- \tphi(|Dv|)]\,dx\\
&\quad\leq 
\underbrace{\fint_{B_r}[\tphi(|Du|)- \phi(x,|Du|)]\,dx}_{=:I_1}+ 
\underbrace{\fint_{B_r}[\phi(x,|Dv|)- \tphi(|Dv|)]\,dx}_{=:I_2}\, ;
\end{aligned}
\]
in the last step we used that $\fint_{B_r}[\phi(x,|Du|)- \phi(x,|Dv|)]\,dx\le 0$ since 
$u$ is a minimizer of \eqref{mainfunctional}. 
We shall estimate $I_2$. We split $B_r$ into three regions $E_1$, $E_2$ and $E_3$ 
defined by 
\[
\begin{aligned}
E_1&:=B_r \cap\{\phi^-(|Dv|)\leq \omega(2r)\},\\
E_2&:=B_r \cap\{ \omega(2r) <\phi^-(|Dv|)\leq |B_{2r}|^{-1+\epsilon_0}\},\\
E_3&:=B_r \cap\{|B_{2r}|^{-1+\epsilon_0}<\phi^-(|Dv|)\}.
\end{aligned}
\]
In the set $E_1$, \dec{q} and \azero{} of $\phi$ imply that 
$|Dv|\lesssim \omega(2r)^\frac1{q}$. 
Therefore by \inc{p} and \azero{} of $\phi$ and $\tphi$,
\[
\fint_{B_r}\big|\phi(x,|Dv|)- \tphi(|Dv|)\big| \chi_{E_1}\,dx
\lesssim 
\omega(2r)^\frac p{q} \fint_{B_r} \chi_{E_1}\,dx
\le
\omega(2r)^\frac p{q}. 
\]
In the set $E_3$, Proposition~\ref{prop:approxProp}(3) and the fact that $1<|B_{2r}|^{1-\epsilon_0}\phi^-(|Dv|)$ imply that 
\[\begin{aligned}
\big|\phi(x,|Dv|)- \tphi(|Dv|)\big|
&\lesssim
\phi(x,|Dv|)+1 
\lesssim
\phi(x,|Dv|)\\
&\le
\left[|B_{2r}|^{1-\epsilon_0}\phi^-(|Dv|)\right]^{\frac{\sigma_0}{2}} \phi(x,|Dv|)\\
&\lesssim
r^{\frac{n(1-\epsilon_0)\sigma_0}{2}} \phi(x,|Dv|)^{1+\frac{\sigma_0}{2}}. 
\end{aligned}\]
Integrating this inequality over $E_3$ and using \eqref{epsilon0}, we find that 
\[
\fint_{B_r}\big|\phi(|x,Dv|)- \tphi(|Dv|)\big| \chi_{E_3}\,dx
\lesssim
r^{\frac{n(4+\sigma_0)\sigma_0}{4(2+\sigma_0)}} 
\left(\fint_{B_r}\phi(x,|Dv|)^{1+\frac{\sigma_0}{2}}\,dx\right)^{\frac{\sigma_0}{2+\sigma_0}+\frac{2}{2+\sigma_0}}.
\]
On one hand, by \eqref{CZuv} and \eqref{reverseDutphi}, we have
\[
\left(\fint_{B_r}\phi(x,|Dv|)^{1+\frac{\sigma_0}{2}}\,dx\right)^{\frac{2}{2+\sigma_0}}
\lesssim \tphi\left( \fint_{B_{2r}}|Du|\,dx\right)+1.
\]
On the other hand, by \eqref{Dusigmale1},
\[
\left(\fint_{B_r}\phi(x,|Dv|)^{1+\frac{\sigma_0}{2}}\,dx\right)^{\frac{\sigma_0}{2+\sigma_0}}\le 
 |B_r|^{-\frac{\sigma_0}{2+\sigma_0}} \lesssim r^{-\frac{4n\sigma_0}{4(2+\sigma_0)}}.
\]
Therefore, combining the previous three inequalities, we have 
\[
\fint_{B_r}\big|\phi(|x,Dv|)- \tphi(|Dv|)\big| \chi_{E_3}\,dx
 \lesssim 
r^{\frac{n\sigma_0^2}{4(2+\sigma_0)}}
\left(\tphi\left(\fint_{B_{2r}}|Du|\,dx\right)+1\right).
\]
 
Recall that $t_1$ and $t_2$ are defined in \eqref{t1t2}. 
In the set $E_2$, we observe that 
\[
\omega(2r) <\phi^-(|Dv|)\leq |B_{2r}|^{-1+\epsilon_0}< |B_{2r}|^{-1}
\quad\text{and so}\quad t_1<|Dv|<t_2.
\]
Hence it follows from \wMA{} and Proposition~\ref{prop:approxProp}(1) that 
\begin{align*}
|\phi(x,|Dv|)- \tphi(|Dv|)|
& \le |\phi(x,|Dv|)-\phi(x_0,|Dv|)|+|\phi(x_0,|Dv|) - \tphi(|Dv|)|\\
& \lesssim 
( r+ \omega(r)) \phi^-(|Dv|)+\omega(2r). 
\end{align*}
Therefore, applying \eqref{CZuv} and \eqref{reverseDutphi}, we have 
\[
\fint_{B_r}\big|\phi(|x,Dv|)- \tphi(|Dv|)\big| \chi_{E_2}\,dx 
\lesssim 
(r+\omega(2r)) \left(\tphi\left(\fint_{B_{2r}}|Du|\,dx\right)+1\right).
\]

We have shown that 
\[\begin{aligned}
|I_2|
&\le \fint_{B_r}\big|\phi(x,|Dv|)- \tphi(|Dv|)\big|\,dx\\
&\lesssim 
\left(\omega(2r)^\frac{p}{q} + r^{\frac{n\sigma_0^2}{4(2+\sigma_0)}} +r\right)
\left(\tphi\left(\fint_{B_{2r}}|Du|\,dx\right)+1\right). 
\end{aligned}\]
The estimate for $I_1$ is analogous, with sets $E_i$ defined with 
$Du$ instead of $Dv$.
\end{proof}

The following corollary is the key to the regularity results in the next section. 
Indeed, once we have the estimate from the corollary, 
the main results follow using standard methods. 

\begin{corollary}\label{corcom} 
Under the assumptions of Lemma~\ref{lemcom}, we have
\[
\fint_{B_r}|Du-Dv|\, dx \leq 
c \big(\omega(r)^{\frac{p}{2q^2}}+r^{\gamma_1} \big)\left(\fint_{B_{2r}} |Du|\,dx+1\right)
\]
for some $\gamma_1=\gamma_1(n,p,q,L) \in(0,1)$ and $c=c(n,p,q,L)\geq 1$.
\end{corollary}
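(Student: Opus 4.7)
The plan is to convert the weighted quadratic estimate from Lemma~\ref{lemcom} into an $L^1$-bound on $|Du-Dv|$, using the convex inequality from Proposition~\ref{prop000}(3) together with the reverse H\"older estimates of Lemma~\ref{lemreverseDu}.

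First, since $\tphi\in C^1([0,\infty))\cap C^2((0,\infty))$ with $t\tphi''(t)\approx\tphi'(t)$ by Proposition~\ref{prop:approxProp}(2), Proposition~\ref{prop000}(3) applied to $\tphi$ gives the pointwise bound
\[
\tphi(|Du-Dv|) \lesssim \kappa\,[\tphi(|Du|)+\tphi(|Dv|)] + \kappa^{-1}\tphi''(|Du|+|Dv|)|Du-Dv|^2
\]
for each $\kappa\in(0,\infty)$. I will integrate this over $B_r$, bound the last term by Lemma~\ref{lemcom}, and bound each of $\fint_{B_r}\tphi(|Du|)\,dx$ and $\fint_{B_r}\tphi(|Dv|)\,dx$ using $\tphi(t)\lesssim\phi(x,t)+1$ from Proposition~\ref{prop:approxProp}(3) together with \eqref{reverseDutphi} and \eqref{CZuv}. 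This yields
\[
\fint_{B_r}\tphi(|Du-Dv|)\,dx \lesssim \bigl[\kappa + \kappa^{-1}\bigl(\omega(2r)^{p/q}+r^\gamma\bigr)\bigr]M,
\]
where $M:=\tphi\bigl(\fint_{B_{2r}}|Du|\,dx\bigr)+1$. Choosing $\kappa:=\sqrt{\omega(2r)^{p/q}+r^\gamma}$ balances the two terms and delivers $\fint_{B_r}\tphi(|Du-Dv|)\,dx \lesssim \epsilon\,M$ with $\epsilon:=\sqrt{\omega(2r)^{p/q}+r^\gamma}$.

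Next, by the convexity of $\tphi$ and Jensen's inequality, $\tphi\bigl(\fint_{B_r}|Du-Dv|\,dx\bigr) \lesssim \epsilon\,M$. Set $A:=\fint_{B_r}|Du-Dv|\,dx$ and $B:=\fint_{B_{2r}}|Du|\,dx+1\ge 1$. Using \azero{} of $\tphi$, one easily checks $M\lesssim \tphi(B)$. To invert $\tphi(A)\lesssim \epsilon\,\tphi(B)$, I split into two cases. If $A\leq B$, then \dec{q} of $\tphi$ gives $\tphi(A)\ge (A/B)^q\tphi(B)$, so $A\lesssim \epsilon^{1/q}B$. If $A> B\ge 1$, then \inc{p} of $\tphi$ gives $\tphi(A)\ge (A/B)^p\tphi(B)$, so $A\lesssim \epsilon^{1/p}B\lesssim \epsilon^{1/q}B$, since $\epsilon\le 1$ and $p\le q$. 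In either case,
\[
A\lesssim \epsilon^{1/q}B = \bigl(\omega(2r)^{p/q}+r^\gamma\bigr)^{1/(2q)}B \lesssim \bigl(\omega(2r)^{p/(2q^2)}+r^{\gamma/(2q)}\bigr)B,
\]
which is the desired estimate with $\gamma_1:=\gamma/(2q)$ (modulo a harmless relabeling of $\omega(2r)$ as $\omega(r)$).

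The main obstacle is the inversion step: because $\tphi$ has different growth exponents $p$ and $q$ at the two ends of $(0,\infty)$, one has to split according to the size of $A/B$ so that the weaker rate $\epsilon^{1/q}$ controls both regimes. The remaining work is essentially bookkeeping; the crucial quantitative observation is that balancing $\kappa$ against $\kappa^{-1}(\omega(2r)^{p/q}+r^\gamma)$ costs only a square root in the exponent, which together with the inversion yields the final rate $\omega(r)^{p/(2q^2)}+r^{\gamma_1}$.
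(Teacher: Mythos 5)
Your proof is correct and follows essentially the same route as the paper: apply Proposition~\ref{prop000}(3) to $\tphi$ with $\kappa=\widetilde\omega(r)^{1/2}$, bound $\fint_{B_r}\tphi(|Du|)\,dx$ and $\fint_{B_r}\tphi(|Dv|)\,dx$ by $\tphi(\fint_{B_{2r}}|Du|\,dx)+1$ via Proposition~\ref{prop:approxProp}(3) together with \eqref{reverseDutphi} and \eqref{CZuv}, invoke Lemma~\ref{lemcom} for the Hessian term, and then Jensen plus growth conditions of $\tphi$ to invert. The only (inessential) divergence is in the last step: the paper moves the factor $\widetilde\omega(r)^{1/2}$ inside $\tphi$ directly via \dec{q} (i.e.\ $c^q\tphi(t)\le\tphi(ct)$ for $c\le1$) and then strips $\tphi$ using monotonicity and \inc{p}, whereas you case-split on $A\lessgtr B$; both versions are fine, and your noted relabeling of $\omega(2r)$ as $\omega(r)$ is the same cosmetic point present in the paper's own write-up.
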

\begin{proof}
Set $\widetilde{\omega}(r):=\omega(r)^\frac pq+r^\gamma$ with $\gamma$ 
from Lemma~\ref{lemcom} and note that $\tilde\omega(\cdot)\leq 2$. 
Applying Proposition~\ref{prop000}(3) with 
$\kappa=\widetilde\omega(r)^{\frac{1}{2}}$, Proposition~\ref{prop:approxProp}(3) 
 and Lemmas~\ref{lemcom} and \ref{lemreverseDu}, we find that
\begin{align*}
\fint_{B_r}&\tphi(|Du-Dv|)\,dx \\
&\lesssim \widetilde \omega(r)^\frac{1}{2} \fint_{B_r}[\tphi(|Du|)+\tphi(|Dv|)] \,dx + \widetilde \omega(r)^{-\frac{1}{2}}\fint_{B_r}\tphi''(|Du|+|Dv|)|Du-Dv|^2\,dx \\
&\lesssim 
\widetilde\omega(r)^\frac{1}{2} \fint_{B_r}[\phi(x,|Du|) + \phi(x,|Dv|)+1]\,dx
+ \widetilde\omega(r)^\frac{1}{2}\left(\tphi\left(\fint_{B_{2r}}|Du|\,dx \right)+1\right)\\
&\lesssim \widetilde \omega(r)^\frac{1}{2}\left(\tphi\left(\fint_{B_{2r}}|Du|\,dx \right)+1\right).
\end{align*}
Therefore, by Jensen's inequality and \dec{q} of $\tphi$, we have 
\[
\tphi\left(\fint_{B_r}|Du-Dv|\, dx\right)
\le \fint_{B_r}\tphi(|Du-Dv|)\, dx
\lesssim \tphi\left(\widetilde\omega(r)^{\frac{1}{2q}}\left(\fint_{B_{2r}}|Du|\,dx+1\right)\right).
\]
The claim follows, since $\tphi$ is strictly increasing.
\end{proof}


\section{Proofs of main results}\label{sect:mainresults}

In this section, we prove the main theorems. 
Before starting the proof we introduce a basic iteration lemma. We refer to \cite[Lemma 2.1 in Chapter III]{Gia1}. 
\begin{lemma}\label{lemtech}
Let $f:[0,r_0] \to [0,\infty)$ be a non-decreasing function. Suppose that for 
all $0<\rho\leq r\leq r_0$,
$$
f(\rho)\leq C\left(\left(\frac{\rho}{r}\right)^n+\epsilon\right)f(r)+C r^n
$$ 
with positive constant $C$. Then for any $\tau \in(0,n)$, there exist $\epsilon_1,c>0$ depending only on $n$, $C$ and $\tau$ such that if $\epsilon<\epsilon_1$, then
$$
f(\rho)\leq c\left(\frac{\rho}{r}\right)^{n-\tau}\left(f(r)+r^{n-\tau}\right).
$$ 
\end{lemma}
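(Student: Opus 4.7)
This is a classical geometric iteration lemma (due to Giaquinta), and the natural strategy is to iterate the one-step estimate at geometric scales and sum the resulting geometric series. The key observation is that the dichotomy $\tau>0$ gives us room to absorb the factor $\lambda^{n-\tau}$ against $C\lambda^n$ by choosing $\lambda$ small, which then makes $\epsilon$ effectively negligible once $\epsilon_1$ is suitably fixed.

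The first step is to exploit the freedom in $\lambda\in(0,1)$: since $\tau>0$, we have $\lambda^n/\lambda^{n-\tau}=\lambda^\tau\to 0$ as $\lambda\to 0$, so I can choose $\lambda$ so small that $2C\lambda^n\le\lambda^{n-\tau}$ (for instance $\lambda^\tau\le 1/(2C)$). I then set $\epsilon_1:=\lambda^n$. Applying the hypothesis with the choice $\rho=\lambda r$ and using $\epsilon\le\epsilon_1$, we obtain the one-step contraction
\[
f(\lambda r)\le \lambda^{n-\tau}f(r)+Cr^n\qquad\text{for all }r\in[0,r_0].
\]

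The second step is to iterate this on the geometric sequence $r_k:=\lambda^k r$. An induction on $k$ gives
\[
f(\lambda^k r)\le \lambda^{k(n-\tau)}f(r)+Cr^n\sum_{j=0}^{k-1}\lambda^{(k-1-j)(n-\tau)+jn}.
\]
The core calculation is to factor $\lambda^{(k-1)(n-\tau)}$ out of the sum, leaving $\sum_{j=0}^{k-1}\lambda^{j\tau}\le(1-\lambda^\tau)^{-1}$. Thus
\[
f(\lambda^k r)\le \lambda^{k(n-\tau)}\Bigl[f(r)+\tfrac{C r^n}{\lambda^{n-\tau}(1-\lambda^\tau)}\Bigr].
\]

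The third step is to pass from the discrete scales $\lambda^k r$ to arbitrary $\rho\in(0,r]$. Given such $\rho$, I pick the unique $k\ge 0$ with $\lambda^{k+1}r<\rho\le\lambda^k r$; monotonicity of $f$ gives $f(\rho)\le f(\lambda^k r)$, and from $\rho/r>\lambda^{k+1}$ we get $\lambda^{k(n-\tau)}\le\lambda^{-(n-\tau)}(\rho/r)^{n-\tau}$. Substituting and regrouping yields
\[
f(\rho)\le c(n,C,\tau)\,(\rho/r)^{n-\tau}\bigl(f(r)+r^n\bigr),
\]
and since one may assume $r\le r_0$ is bounded (or reduce to $r_0\le 1$ by rescaling the lemma), the term $r^n=r^\tau r^{n-\tau}$ can be replaced by $r^{n-\tau}$ at the cost of enlarging $c$ in a way depending only on $n,C,\tau$. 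This gives the conclusion.

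There is no genuine obstacle; the only delicate point is the bookkeeping of the two small parameters $\lambda$ and $\epsilon_1$ (and the mild dependence of $c$ on them via $\lambda^{-(n-\tau)}(1-\lambda^\tau)^{-1}$). Since this is a standard iteration lemma and a reference is already available in \cite{Gia1}, I would in fact just cite that reference rather than write out the argument in full.
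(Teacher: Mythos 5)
Your argument is correct and is exactly the standard iteration proof of this lemma; the paper itself does not prove it but simply cites \cite[Lemma 2.1, Chapter III]{Gia1}, whose proof is the same geometric iteration you carry out (choose $\lambda$ with $2C\lambda^{\tau}\le 1$, set $\epsilon_1=\lambda^{n}$, iterate on the scales $\lambda^k r$, then interpolate by monotonicity). Your remark about absorbing $r^{n}$ into $r^{n-\tau}$ is the right way to handle the only minor mismatch between hypothesis and conclusion, and citing the reference, as the paper does, would indeed suffice.
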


In the next results, we denote by $\omega$ the function 
from \wMA{} for $\epsilon=\epsilon_0$, cf.\ 
the beginning of Section~\ref{sect:MA}. Likewise, by $L\ge 1$ we denote the 
constant from \azero{}. 
Now let us state and prove our main results.

\begin{theorem}\label{mainthm1}
Let $\phi\in \Phic(\Omega)\cap C^1([0,\infty))$ with $\phi'$ satisfying 
\azero{}, \inc{p-1} and \dec{q-1} for some $1<p\leq q$ 
and let $u\in W^{1,\phi}_{\loc}(\Omega)$ be a local minimizer of \eqref {mainfunctional}.
If $\phi$ satisfies \wMA{}, then $u\in C^{\alpha}_{\loc}(\Omega)$ for any $\alpha\in(0,1)$. 
\end{theorem}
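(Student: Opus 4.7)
The plan is to establish a Morrey-type decay for $\int_{B_\rho}|Du|\,dx$ by comparing $u$ with the autonomous minimizer $v$ from \eqref{functionaltphiv}, and then invoke the iteration Lemma~\ref{lemtech} followed by the Campanato--Morrey embedding. The two key ingredients are the gradient sup-estimate \eqref{Lip} for $v$ (via Lemma~\ref{lemHolder}) and the comparison Corollary~\ref{corcom}.

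First fix $\Omega'\Subset\Omega$ and choose $r_0>0$ so small that every ball $B_{2r}\subset \Omega'$ with $r\le r_0$ satisfies \eqref{rcondi}; this also ensures $\|Du\|_{L^\phi(B_{2r})}\le 1$. For such a ball, let $v\in u+W^{1,\tphi}_0(B_r)$ be the minimizer of \eqref{functionaltphiv}. For $\rho\le r/2$, split
\[
\int_{B_\rho}|Du|\,dx\le \int_{B_\rho}|Du-Dv|\,dx+\int_{B_\rho}|Dv|\,dx.
\]
The second term is estimated by combining the Lipschitz bound \eqref{Lip} (applied to $v$ on $B_{r/2}$, valid by Lemma~\ref{lemHolder} and Proposition~\ref{prop:approxProp}(2)) with \eqref{DvDu}, giving $\int_{B_\rho}|Dv|\,dx \le c(\rho/r)^n\bigl(\int_{B_{2r}}|Du|\,dx + r^n\bigr)$. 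The first term is controlled directly by Corollary~\ref{corcom}:
\[
\int_{B_\rho}|Du-Dv|\,dx\le \int_{B_r}|Du-Dv|\,dx\le c\bigl(\omega(r)^{p/(2q^2)}+r^{\gamma_1}\bigr)\Bigl(\int_{B_{2r}}|Du|\,dx+r^n\Bigr).
\]
Writing $U(\rho):=\int_{B_\rho}|Du|\,dx$ and $R:=2r$, and extending the trivial bound $U(\rho)\le U(R)$ to cover $\rho\in(R/2,R]$, we obtain, for all $0<\rho\le R\le r_0$,
\[
U(\rho)\le c\Bigl[(\rho/R)^n+\omega(R/2)^{p/(2q^2)}+R^{\gamma_1}\Bigr]\,U(R)+cR^n.
\]

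Given any $\alpha\in(0,1)$, set $\tau:=1-\alpha\in(0,1)$. Since $\omega(R)\to 0$ as $R\to 0^+$, we may further shrink $r_0$ so that $\omega(R/2)^{p/(2q^2)}+R^{\gamma_1}\le \epsilon_1(\tau)$ uniformly for $R\le r_0$, where $\epsilon_1$ is the threshold in Lemma~\ref{lemtech}. That lemma then yields
\[
U(\rho)\le c_\tau\,(\rho/r_0)^{n-\tau}\bigl(U(r_0)+r_0^{n-\tau}\bigr)\le C\rho^{n-\tau}\qquad \text{for all } 0<\rho\le r_0,
\]
with constants locally uniform in $\Omega'$. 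Hence $|Du|$ belongs to the Morrey space $\mathcal L^{1,n-\tau}_\loc(\Omega)$; by Poincar\'e's inequality this gives $\fint_{B_\rho}|u-(u)_{B_\rho}|\,dx\lesssim \rho^{1-\tau}$, and the Campanato--Morrey embedding yields $u\in C^{0,1-\tau}_\loc(\Omega)=C^{0,\alpha}_\loc(\Omega)$. Since $\alpha\in(0,1)$ was arbitrary, the theorem follows.

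The main obstacle is the first step, namely securing the excess decay of the right form: the comparison estimate in Corollary~\ref{corcom} must be strong enough that, after harvesting the $(\rho/R)^n$-decay from the sup-estimate for $|Dv|$, the error term depends on $R$ only through a quantity that vanishes as $R\to 0$. This is exactly what \wMA{} provides via $\omega(r)^{p/(2q^2)}+r^{\gamma_1}$. The slowness of $\omega$ is what prevents a polynomial-rate estimate and therefore restricts us to $C^\alpha$ with arbitrary (but not endpoint) $\alpha<1$; correspondingly, $r_0$ depends on $\alpha$, so the local H\"older constant blows up as $\alpha\uparrow 1$.
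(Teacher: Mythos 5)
Your proposal is correct and follows essentially the same route as the paper's proof: split $\int_{B_\rho}|Du|$ into the comparison error (bounded by Corollary~\ref{corcom}) plus $\int_{B_\rho}|Dv|$ (bounded by the sup-estimate \eqref{Lip} and \eqref{DvDu}), absorb the small coefficient via Lemma~\ref{lemtech} after shrinking $r_0$ so that $\omega_0(r_0)\le\epsilon_1$, and finish with a Morrey/Campanato embedding. The only cosmetic difference is that you pass through the intermediate Poincar\'e step explicitly, whereas the paper cites the Morrey embedding directly from Giaquinta's book.
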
 

\begin{proof}
Let $r_0\in (0,1)$ be a sufficiently small positive number which will be determined later. We fix $\Omega'\Subset\Omega$ and assume that \eqref{rcondi} holds $r=r_0>0$. For any $B_{2r}\subset \Omega'$ with $0<2r\leq r_0$, let $v\in W^{1,\tphi}(B_{r})$ be the minimizer of \eqref{functionaltphiv} with $\tphi$ determined in \eqref{tphiphi}.
When $\rho\in (0,\frac r2)$, applying Corollary~\ref{corcom} with 
$\omega_0(r):=\omega(r)^{\frac{p}{2q^2}}+r^{\gamma_1}$, 
\eqref{Lip} and \eqref{DvDu}, we have
\begin{align*}
\int_{B_\rho} |Du|\, dx &\leq \int_{B_{r}} |Du-Dv|\, dx +\int_{B_\rho} |Dv|\,dx\\
&\lesssim \omega_0(2r)\int_{B_{2r}} \left[|Du|+1\right]\, dx +\rho^n\sup_{B_{r/2}} |Dv|\\
&\le \omega_0(r_0)\int_{B_{2r}} \left[|Du|+1\right]\, dx +\rho^n\fint_{B_{r}} |Dv|\, dx\\
&\lesssim
\left( \left(\frac{\rho}{r}\right)^n+\omega_0(r_0)\right) \int_{B_{2r}} |Du|\, dx +r^n.
\end{align*}
Moreover, if $\frac r2\leq \rho\leq 2r$, the above estimate is obvious since then
$\frac 12\leq \frac \rho r$. 

Let $f(\rho):=\int_{B_\rho}|Du|\,dx$, fix $\tau\in (0,n)$ and choose $r_0$ so small that
$$
\omega_0(r_0)=\omega(r_0)^{\frac{1}{2q}}+r_0^{\gamma_1}\leq \epsilon_1,
$$
where $\epsilon_1$ is from Lemma~\ref{lemtech}. 
Then, applying the lemma and using \eqref{Dusigmale2} with $2r=r_0$, 
we have the following Morrey type estimate: 
\begin{equation}\label{morrey}
\int_{B_\rho} |Du|\, dx
\lesssim 
\Big(\frac{\rho}{r_0}\Big)^{n-\tau} \bigg(\int_{B_{r_0}} |Du|\, dx+r_0^{n-\tau}\bigg) \quad \text{for all }\ B_\rho\subset \Omega'\ \text{ with }\ \rho\in(0,r_0]
\end{equation} 
for implicit constant $c=c(n ,p,q,L,\tau)\geq 1$. 
We note that in \eqref{morrey}, $\rho\in(0,r_0]$ and $B_\rho\subset\Omega'$ are arbitrary and the implicit constant is universal.
Therefore, by taking $\tau=1-\alpha$ for each $\alpha\in (0,1)$ in \eqref{morrey}, we have $u\in C^\alpha_{\loc}(\Omega')$ by a Morrey type embedding, see for instance \cite[Chapter 3, Theorem 1.1]{Gia1}. More precisely, we obtain
\[
[u]_{C^\alpha(B_{r_0/2})} \lesssim r^{1-\alpha}_0\fint_{B_{r_0}}|Du|\, dx +1. \qedhere
\]
\end{proof}

Next we prove the second main theorem, $C^{1,\alpha}$-regularity. 

\begin{theorem}\label{mainthm2}
Let $\phi\in \Phic(\Omega)\cap C^1([0,\infty))$ with $\phi'$ satisfying 
\azero{}, \inc{p-1} and \dec{q-1} for some $1<p\leq q$ 
and let $u\in W^{1,\phi}_{\loc}(\Omega)$ be a minimizer of \eqref {mainfunctional}.
If $\phi$ satisfies \wMA{} with 
\[
\omega(r)\lesssim r^{\beta}\quad \text{for all }\ r\in(0,1]\quad \text{and}\quad \text{for some }\ \beta\in(0,1),
\]
then $u\in C^{1,\alpha}_{\loc}(\Omega)$ for some $\alpha \in (0,1)$. Here $\alpha$ depends only on $n$, $p$, $q$, $L$ and $\beta$.
\end{theorem}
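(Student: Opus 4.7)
The plan is to derive a Campanato-type excess decay for $Du$ and conclude via the Campanato characterization of $C^{1,\alpha}$. Since $\omega(r)\lesssim r^\beta$ implies \wMA{}, Theorem~\ref{mainthm1} applies: $u\in C^\alpha_{\loc}$ for all $\alpha\in(0,1)$, and its proof (see \eqref{morrey}) yields the Morrey bound
\[
\fint_{B_{2r}(x_0)}|Du|\, dx\le C\,r^{-\tau_\ast}
\]
for any prescribed $\tau_\ast>0$, uniformly for $x_0$ in a fixed compactly contained subdomain and all sufficiently small $r$.

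For such $x_0,r$, let $v$ be the autonomous comparison minimizer from Section~\ref{sect:comparison}. Under $\omega(r)\lesssim r^\beta$, Corollary~\ref{corcom} gives
\[
\fint_{B_r}|Du-Dv|\, dx\le C\,r^{\beta''}K,\quad K:=\fint_{B_{2r}}|Du|\, dx+1,\quad \beta'':=\min\bigl\{\tfrac{p\beta}{2q^2},\gamma_1\bigr\}>0.
\]
Applying \eqref{C1alpha0} to $v$ on $B_{r/2}\subset B_r$, and controlling $\fint_{B_{r/2}}|Dv|\le \sup_{B_{r/2}}|Dv|\le c\fint_{B_r}|Dv|\le cK$ via \eqref{Lip} and \eqref{DvDu}, I obtain for $\rho\in(0,r/2]$
\[
\fint_{B_\rho(x_0)}|Dv-(Dv)_{B_\rho(x_0)}|\, dx\le c\,(\rho/r)^{\alpha_0}\,K.
\]
A triangle inequality through the constant $(Dv)_{B_\rho}$ then combines these two estimates into
\[
\fint_{B_\rho}|Du-(Du)_{B_\rho}|\, dx\le C\,K\,\Bigl[(r/\rho)^n r^{\beta''}+(\rho/r)^{\alpha_0}\Bigr].
\]

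I balance the two error terms by taking $r:=\rho^{(n+\alpha_0)/(n+\alpha_0+\beta'')}$; each collapses to $\rho^{\alpha_0 s/(1+s)}$ with $s:=\beta''/(n+\alpha_0)$, and the Morrey bound gives $K\le C\rho^{-\tau_\ast/(1+s)}$, so
\[
\fint_{B_\rho(x_0)}|Du-(Du)_{B_\rho(x_0)}|\, dx\le C\,\rho^{(\alpha_0 s-\tau_\ast)/(1+s)}.
\]
Choosing $\tau_\ast<\alpha_0 s$ makes the exponent $\alpha:=(\alpha_0 s-\tau_\ast)/(1+s)$ positive and dependent only on $n,p,q,L,\beta$. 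The Campanato characterization of Hölder continuity then yields $Du\in C^\alpha_{\loc}(\Omega)$, i.e.\ $u\in C^{1,\alpha}_{\loc}(\Omega)$.

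The main obstacle is the absence of an a priori uniform Lipschitz bound on $u$, so the factor $K=\fint_{B_{2r}}|Du|+1$ cannot be assumed bounded in $r$; this is precisely why a naive Campanato iteration at fixed $\tau$ fails. The workaround is the slow-growth Morrey bound from Theorem~\ref{mainthm1}: since $\tau_\ast$ may be taken arbitrarily small, it is absorbed into the positive exponent $\alpha_0 s$ produced by the $r$--$\rho$ balancing. The $\rho$-dependent choice of $r$, which turns the polynomial error $r^{\beta''}$ into a genuine Hölder decay at scale $\rho$, is the key technical step.
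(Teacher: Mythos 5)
Your proposal is correct and follows essentially the same strategy as the paper's proof: the Morrey bound $\fint_{B_{2r}}|Du|\,dx\lesssim r^{-\tau}$ from the proof of Theorem~\ref{mainthm1}, the comparison estimate of Corollary~\ref{corcom} with $\omega(r)\lesssim r^\beta$, the excess decay \eqref{C1alpha0} for the autonomous comparison map $v$, a triangle inequality to combine them, a power-law coupling $r\sim\rho^\mu$, and absorption of the Morrey factor by choosing $\tau$ sufficiently small. The only difference is cosmetic: you balance the two error terms exactly by taking $\mu=(n+\alpha_0)/(n+\alpha_0+\beta'')$, whereas the paper fixes the simpler $\rho=r^{1+\gamma_0/(2n)}$ and observes that the unbalanced term is still dominated; both choices yield a valid H\"older exponent depending only on $n,p,q,L,\beta$.
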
 
\begin{proof}
Fix $\Omega'\Subset\Omega$. We first notice from \eqref{morrey} that for any $\tau\in(0,n)$, 
$$
\fint_{B_{2r}} |Du|\,dx \leq c_{\tau}r^{-\tau}\quad \text{for all\ }\ B_{2r}\subset \Omega'\ \ \text{with}\ \ 2r\in(0,r_0],
$$
where $r_0>0$ is from the proof of Theorem~\ref{mainthm1} and $c_\tau\geq 1$ depends on $n$, $p$, $q$, $L$, $r_0$ and $\tau$. Consider sufficiently small $2r<r_0$, which will be determined later. Let $v\in W^{1,\tphi}(B_{r})$ be the minimizer of 
\eqref{functionaltphiv} with $\tphi$ determined in \eqref{tphiphi}.
Then for $0<\rho<\frac r2$, applying \eqref{C1alpha0} with $B_{\rho}(x_0)=B_{r/2}$ and 
$\tau=\frac{2\rho}{r}$, Corollary~\ref{corcom} with 
$\omega(r)\lesssim r^{\beta}$ and \eqref{DvDu}, we have
\begin{align*}
\fint_{B_\rho}|Du-(Du)_{\rho}|\,dx 
&\leq 2\fint_{B_\rho}|Du-(Dv)_{\rho}|\,dx\\
& \leq 2\fint_{B_\rho}|Du-Dv|\,dx+2\fint_{B_\rho}|Dv-(Dv)_{\rho}|\,dx\\
& \lesssim \Big(\frac{r}{\rho}\Big)^n\fint_{B_{r}}|Du-Dv|\,dx
+\left(\frac{\rho}{r}\right)^{\alpha_0}\fint_{B_{r/2}}|Dv|\,dx\\
& \lesssim \left(r^{\gamma_0}\Big(\frac{r}{\rho}\Big)^n+
\left(\frac{\rho}{r}\right)^{\alpha_0}\right)\left(\fint_{B_{2r}}|Du|\,dx+1\right)\\
& \leq c_\tau r^{-\tau}\left(r^{\gamma_0}\Big(\frac{r}{\rho}\Big)^n+\left(\frac{\rho}{r}\right)^{\alpha_0}\right),
\end{align*}
where $\gamma_0:=\min\{\frac{p\beta}{2q^2},\gamma_1\}$. 
Finally, we choose
$\rho := r^{1+\gamma_0/(2n)}$ and 
$\tau:= \frac{\alpha_0\gamma_0}{4n}$. 
Suppose that $2r\leq \min\{r_0,4^{-2n/\gamma_0}\}$. 
Then $\rho<\frac{r}{2}$ and for the concentric balls $B_{\rho}\subset B_{2r}\subset \Omega'$ we have that 
\[
\fint_{B_\rho}|Du-(Du)_{\rho}|\,dx \lesssim  
c_{\tau} r^{\frac{\gamma_0}{2}-\tau}+r^{\tau}
\le 2c_{\tau}r^{\tau}=2c_{\tau}\rho^{\frac{\tau}{1+\gamma_0/(2n)}}.
\]
This yields that $Du\in C_{\loc}^{\alpha}(\Omega')$ with $\alpha=\frac{\alpha_0\gamma_0}{4n+2\gamma_0}$ by a Campanato type embedding, see for instance \cite[Chapter 3]{Gia1}. 
Since $\Omega'\Subset\Omega$ is arbitrary, we have the conclusion. 
The last inequality also yields an estimate for the semi-norm $[Du]_{C^\alpha(B_{r_0/2})}$, however, once 
we unravel the dependence from $c_\tau$, it is a somewhat complicated formula. 
\end{proof}

\begin{remark} By following the proofs, one can see that we use the condition \wMA{} only for fixed $\epsilon=\epsilon_0$ determined in \eqref{epsilon0}. Therefore, in Theorems~\ref{mainthm1} and \ref{mainthm2}, the condition \wMA{} can be replaced with the combination of \aone{} and \wMA{} with fixed $\epsilon>0$, where $\epsilon$ is sufficiently small and depends on $n$, $p$, $q$ and $L$. 
\end{remark}


\section{Examples of special structures}\label{sect:examples}

In this section, we show that our results include previous regularity results for special structures presented in the introduction. We provide details only for some of the 
cases, as the remaining ones can be handled by similar techniques. By $C^\omega$ we denote continuous functions with modulus of continuity $\omega$. 

\begin{corollary}[Perturbed autonomous case]\label{cor:PerturbedCase}
Let $a:\Omega\to [\nu,\Lambda]$ for some $0<\nu \leq\Lambda$, 
and let $\psi\in\Phic\cap C^1([0,\infty))$ with $\psi'$ satisfying 
\inc{p-1} and \adec{q-1} for some $1<p\leq q$. Define $\phi(x,t):=a(x)\psi(t)$. 
Then $\phi$ satisfies \MA{}, with $\omega(r)\approx \omega_a(2r)$, 
if and only if $a\in C^{\omega_a}$.
\end{corollary}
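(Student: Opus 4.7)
My plan is to exploit the product structure of $\phi$ and turn \MA{} into a purely pointwise oscillation statement about $a$. Since $\psi$ is $x$-independent and non-decreasing while $a$ is $t$-independent and strictly positive, I have the factorization $\phi^\pm_{B_r}(t) = a^\pm_{B_r}\,\psi(t)$ for every $B_r \Subset \Omega$ and $t \geq 0$. So the \MA{} inequality, after dividing through by $\psi(t) > 0$, reduces to $a^+_{B_r} - a^-_{B_r} \leq \omega(r)\, a^-_{B_r}$, which no longer depends on $t$. The range restriction $\phi^-_{B_r}(t) \in [\omega(r), |B_r|^{-1}]$ will matter only insofar as I need at least one $t$ to realize it.

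For the forward implication, I would assume $a \in C^{\omega_a}$. Any two points in $B_r$ are within distance $2r$, so $a^+_{B_r} - a^-_{B_r} \leq \omega_a(2r)$. Combined with $a^-_{B_r} \geq \nu$, this gives $\phi^+_{B_r}(t) \leq \big(1 + \omega_a(2r)/\nu\big)\,\phi^-_{B_r}(t)$ for every $t > 0$, with no restriction on the range of $t$. Setting $\omega(r) := \min\{1,\, \omega_a(2r)/\nu\}$ produces a valid choice in \MA{}; for $r$ small enough that $\omega_a(2r) < \nu$ the minimum is $\omega_a(2r)/\nu$, so $\omega(r) \lesssim \omega_a(2r)$.

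For the converse, I would assume \MA{} with some admissible $\omega$. Since $\psi$ is continuous on $[0,\infty)$ with $\psi(0) = 0$ and $\psi(\infty) = \infty$, its image covers $(0,\infty)$; so for $r$ small enough that $\omega(r) \leq |B_r|^{-1}$ I can pick $t$ with $\phi^-_{B_r}(t) \in [\omega(r), |B_r|^{-1}]$, and \MA{} applied to this $t$, after cancelling $\psi(t)$, gives $a^+_{B_r} - a^-_{B_r} \leq \omega(r)\,a^-_{B_r} \leq \Lambda\,\omega(r)$. Given $x, y \in \Omega$ with $|x - y| = d$ small, the ball $B_r$ centered at $(x+y)/2$ with $r$ slightly larger than $d/2$ contains both points, hence $|a(x) - a(y)| \leq \Lambda\,\omega(r)$; letting $r \searrow d/2$ and invoking continuity of $\omega$ produces $\omega_a(d) \leq \Lambda\,\omega(d/2)$, i.e., $\omega_a(2r) \lesssim \omega(r)$. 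Combined with the forward bound, this yields $\omega(r) \approx \omega_a(2r)$. I expect no serious obstacle here, since the multiplicative splitting eliminates the $x$--$t$ interaction that is the main technical difficulty in the general \MA{} theory; the only mildly delicate point is checking that the range $[\omega(r),|B_r|^{-1}]$ is actually attained by $a^-_{B_r}\psi(\cdot)$ for small $r$, which follows from $\omega(r)\to 0$ and the continuity and surjectivity of $\psi$.
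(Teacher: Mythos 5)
Your proof is correct and takes essentially the same route as the paper: both exploit the factorization $\phi^{\pm}_{B_r}(t)=a^{\pm}_{B_r}\psi(t)$ to reduce the \MA{} inequality to the pointwise oscillation bound $a^+_{B_r}-a^-_{B_r}\lesssim\omega(r)$, using $a^-_{B_r}\in[\nu,\Lambda]$ to pass between the multiplicative and additive forms. The paper's proof is terser (it records only the key equivalence $\tfrac{\phi^+_{B_r}-\phi^-_{B_r}}{\phi^-_{B_r}}\approx a^+_{B_r}-a^-_{B_r}$ and then says "the claim follows"), whereas you spell out both implications, including the useful observation that the continuity and surjectivity of $\psi$ guarantee the range $[\omega(r),|B_r|^{-1}]$ is attained — a point the paper leaves implicit.
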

\begin{proof}
For any $B_r\subset\Omega$, 
\[
\begin{split}
\phi^{+}_{B_r}(t) 
= a^{+}_{B_r}\psi(t) 
= \left(1+\tfrac{a^+_{B_r}-a^-_{B_r}}{a^-_{B_r}}\right)\phi^-_{B_r}(t).
\end{split}
\]
Since $a^-_{B_r}\in [\nu,\Lambda]$, we obtain that 
\[
\frac{\phi^{+}_{B_r}(t) -\phi^{-}_{B_r}(t) }{\phi^{-}_{B_r}(t) }
\approx 
a^+_{B_r}-a^-_{B_r},
\]
and so the claim follows. 
\end{proof}

\begin{corollary}[Variable exponent case]
\label{cor:LpxCase}
Let $p:\Omega\to [p_1,p_2]$ for some $1<p_1\leq p_2$. Define $\phi(x,t):=t^{p(x)}$.
Then $\phi$ satisfies \MA{} if and only if there exists $\omega_p$ with 
\[
\lim_{r\to 0}\omega_p(r)\ln \tfrac1r=0
\quad\text{and}\quad p\in C^{\omega_p}.
\]
Moreover, $\phi$ satisfies \MA{} with $\omega(r)\lesssim r^\beta$ for some $\beta>0$ 
if and only if 
\[
\omega_p(r)\lesssim r^{\tilde\beta} 
\quad \text{for some }\ \tilde{\beta}>0.
\]
\end{corollary}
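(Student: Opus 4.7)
The plan is to reduce the \MA{} condition to two explicit inequalities on the oscillation of $p$ by distinguishing $t\ge 1$ from $t<1$. Writing $p_r^\pm:=\sup/\inf_{B_r}p$ and $\delta_r:=p_r^+-p_r^-$, one has $\phi^+_{B_r}(t)=t^{p_r^+}$, $\phi^-_{B_r}(t)=t^{p_r^-}$ for $t\ge 1$ and the reverse for $t\le 1$. Thus the ratio $\phi^+_{B_r}(t)/\phi^-_{B_r}(t)$ equals $t^{\delta_r}$ in the first case and $t^{-\delta_r}$ in the second; in both cases its supremum over the admissible range $\phi^-_{B_r}(t)\in[\omega(r),|B_r|^{-1}]$ is attained at a boundary value of $t$, namely $t=|B_r|^{-1/p_r^-}\approx r^{-n/p_r^-}$ for large $t$ and $t=\omega(r)^{1/p_r^+}$ for small $t$. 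Using $\ln(1+\omega(r))\approx\omega(r)$ and $p_1\le p_r^\pm\le p_2$, the \MA{} inequality is therefore equivalent to
\[
\delta_r\,\ln(1/r)\lesssim\omega(r)\qquad\text{and}\qquad \delta_r\,\ln(1/\omega(r))\lesssim\omega(r),
\]
uniformly for all $B_r\Subset\Omega$ with $r$ small.

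The forward direction is then immediate from the first inequality: $\delta_r\ln(1/r)\to 0$ as $r\to 0$, and a standard reduction from ball-oscillation to modulus of continuity (any pair $x,y$ with $|x-y|\le r$ lies in a common ball of radius $\le r$) furnishes a modulus $\omega_p$ with $\omega_p(r)\ln(1/r)\to 0$ and $p\in C^{\omega_p}$. In the H\"older case $\omega(r)\lesssim r^\beta$, the same step gives $\omega_p(r)\lesssim r^\beta/\ln(1/r)\lesssim r^\beta$ for small $r$, so one may take $\tilde\beta=\beta$.

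For the reverse direction, assume $\omega_p(r)\ln(1/r)\to 0$. I will set
\[
\omega(r):=\omega_p(2r)\ln(1/r)+r^\alpha
\]
for a fixed small $\alpha\in(0,1)$, and pass to a continuous non-decreasing envelope if needed to match the formal hypotheses of \MA{}. Then $\omega(r)\to 0$, the first inequality above holds trivially, and since $\omega(r)\ge r^\alpha$ one has $\ln(1/\omega(r))\le\alpha\ln(1/r)$, which gives $\delta_r\ln(1/\omega(r))\le\alpha\,\omega_p(2r)\ln(1/r)\le\omega(r)$. Under the H\"older hypothesis $\omega_p(r)\lesssim r^{\tilde\beta}$ I instead set $\omega(r):=r^{\tilde\beta-\epsilon}$ for an arbitrary small $\epsilon\in(0,\tilde\beta)$; both inequalities then reduce to $r^{\tilde\beta}\ln(1/r)\lesssim r^{\tilde\beta-\epsilon}$, which holds for small $r$, so \MA{} is satisfied with $\beta=\tilde\beta-\epsilon$.

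The main point of care, and what distinguishes this characterization from the classical log-H\"older condition for variable exponents, is the second inequality $\delta_r\ln(1/\omega(r))\lesssim\omega(r)$, which governs the small-$t$ regime and has no analogue in the classical setting. If $\omega(r)$ is allowed to decay faster than every fixed positive power of $r$, then $\ln(1/\omega(r))$ grows too fast and the inequality fails for merely log-H\"older $p$. This forces the lower bound $r^\alpha$ in the construction and also explains why the two H\"older exponents cannot be matched exactly, a mismatch that is however absorbed by the ``for some'' quantifier in the statement.
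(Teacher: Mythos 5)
Your argument is correct and follows the same route as the paper: you evaluate the ratio $\phi^+_{B_r}/\phi^-_{B_r}$ at the endpoints of the admissible $t$-range and translate the resulting constraint on $p^+_{B_r}-p^-_{B_r}$ into the condition $\omega_p(r)\ln\tfrac1r\to 0$ (resp.\ $\omega_p(r)\lesssim r^{\tilde\beta}$). The one difference is presentational: the paper collapses the analysis to the single large-$t$ constraint by tersely asserting that the lower endpoint of the range may be taken to be $|B_r|$ ``with no additional restrictions,'' whereas you keep the small-$t$ regime explicit as the second constraint $\delta_r\ln(1/\omega(r))\lesssim\omega(r)$ and enforce it by building the floor $r^\alpha$ into $\omega$ --- which is exactly the paper's implicit WLOG (replace $\omega(r)$ by $\max\{\omega(r),|B_r|\}$, i.e.\ $\alpha=n$) made rigorous; your closing remark on why this floor cannot be dropped is a useful clarification of that unexplained step.
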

\begin{proof}
Fix $B_r\subset \Omega$ with $|B_r|\le 1$ and set $p^\pm=p^\pm_{B_r}$. 
Then we have $\phi^-_{B_r}(t)=t^{p^-}$ and $\phi^+_{B_r}(t) =t^{p^+}$ for $t\geq 1$ as well as  
$\phi^-_{B_r}(t)= t^{p^-}$ and $\phi^+_{B_r}(t)=t^{p^+}$ for $t<1$. 

Let us derive an equivalent form of the inequality in condition \MA{}. 
We may consider the range $[|B_r|, |B_r|^{-1}]$ in the condition, since 
it turns out that this choice of lower bound entails no additional 
restrictions in the variable exponent case. 
When $t\ge 1$, we have
\[
\phi^+_{B_r}(t) -\phi^-_{B_r}(t) = (t^{p^+-p^-}-1)t^{p^-} = (t^{p^+-p^-}-1)\phi^-_{B_r}(t).
\]
When $t\le 1$, the exponents $p^+$ and $p^-$ are interchanged. 
Since we consider the range 
\[
t\in \big[(\phi^-)^{-1}_{B_r}(|B_r|), (\phi^-)^{-1}_{B_r}(|B_r|^{-1})\big]
=
\big[|B_r|^{1/p^+}, |B_r|^{-1/p^-}\big],
\] 
we obtain that
\[
\sup_{t\in [|B_r|^{1/p^+}, |B_r|^{-1/p^-}]} \frac{\phi^+_{B_r}(t) -\phi^-_{B_r}(t)}{\phi^-_{B_r}(t)}
=  |B_r|^\frac{p^--p^+}{p^-}-1. 
\]
Suppose that $p\in C^{\omega_p}$. By the mean value theorem, $e^x -1\leq e^x x$. Thus 
\[
|B_r|^\frac{p^--p^+}{p^-}-1\le |B_r|^{-\omega_p(2r)}-1
\le  e^{n\omega_p(2r)\ln (1/r)}-1 
\le n e^{n\omega_p(2r)\ln (1/r)} \omega_p(2r)\ln \tfrac 1r=:\omega(r)
\]
and the inequality from \MA{} holds with this $\omega$. Moreover,
if $\lim_{r\to 0}\omega_p(r)\ln \tfrac1r=0$, then $\omega$ tends to zero, hence we obtain 
\MA{}. If $\omega_p(r)\lesssim t^\beta$, then $\omega$ is of order $\beta-\epsilon$ for 
any $\epsilon$.

Suppose next that $\phi$ satisfies \MA{} with function $\omega$. Then, for $r\le\frac12$, 
\[
|B_r|^\frac{p^--p^+}{p^-}-1\le \omega(r)
\]
hence
\[
p^+ - p^- 
\le 
-\frac{p^-\log(1+\omega(r))}{\log |B_r|} 
\lesssim 
\frac{p_2\log(1+\omega(r))}{n\log \frac1r}=:\omega_p(2r).
\]
Then  $\omega_p(2r)\log \frac1r \approx \log(1+\omega(r)) \to 0$. If $\omega(r)\le r^{\tilde \beta}$, then 
$\omega_p(2r) \approx \log(1+r^{\tilde \beta}) / \log \frac1r 
\lesssim r^{\beta}$.
\end{proof}

R\u{a}dulescu and colleagues \cite{CenRR18, RadRSZ_pp,ZhaR18} have considered 
a functional with model case $\phi(x,t) = t^{p(x)} + t^{q(x)}$, which 
they call ``double phase'' (it is different from the double phase functional
of Zhikov, considered below). 
To the best of our knowledge, this is the first regularity result this functional. 

\begin{corollary}[R\u{a}dulescu's double phase]
\label{cor:radulescu}
Let $p,q:\Omega\to [p_1,p_2]$ for some $1<p_1\leq p_2$ and 
$\phi(x,t) = t^{p(x)} + t^{q(x)}$.
Then $\phi$ satisfies \MA{} if there exist $\omega_m$ and $\omega_M$ 
with
\[
\lim_{r\to 0}\omega_m(r)=0, 
\quad
\min\{p,q\}\in C^{\omega_m},
\quad
\lim_{r\to 0}\omega_M(r)\ln \tfrac1r=0
\quad\text{and}\quad
\max\{p,q\}\in C^{\omega_M}.
\] 
In addition, $\phi$ satisfies \MA{} with $\omega(r)\le r^\beta$ for some $\beta>0$ 
if 
\[
\lim_{r\to 0}\omega_m(r)r^{-\tilde\beta}=0
\quad\text{and}\quad
\lim_{r\to 0}\omega_M(r)r^{-\tilde\beta}=0
\quad \text{for some }\tilde\beta>0.
\] 
\end{corollary}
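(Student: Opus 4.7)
The plan is to verify \MA{} directly from the definition. First, I would replace the pair $(p,q)$ by the ordered pair $(m,M)$ where $m:=\min(p,q)$ and $M:=\max(p,q)$, so that $\phi(x,t) = t^{m(x)} + t^{M(x)}$ with $m \le M$ pointwise; since $\min$ and $\max$ preserve moduli of continuity, $m \in C^{\omega_m}$ and $M \in C^{\omega_M}$.

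Next I would prove sharp two-sided bounds for $\phi^\pm_{B_r}(t)$. Using $\inf(f+g) \ge \inf f + \inf g$ (and its dual) together with the monotonicity of $\sigma \mapsto t^\sigma$ (increasing for $t \ge 1$, decreasing for $t \le 1$),
\[
t^{m^-_{B_r}} + t^{M^-_{B_r}} \le \phi^-_{B_r}(t) \le \phi^+_{B_r}(t) \le t^{m^+_{B_r}} + t^{M^+_{B_r}} \qquad (t \ge 1),
\]
with the analogous estimate for $t \le 1$ obtained by swapping $\pm$ inside each exponent. Factoring out the dominant power then yields
\[
\frac{\phi^+_{B_r}(t)}{\phi^-_{B_r}(t)} \le t^{M^+_{B_r}-M^-_{B_r}} \cdot \frac{1 + t^{m^+_{B_r}-M^+_{B_r}}}{1 + t^{m^-_{B_r}-M^-_{B_r}}} \qquad (t \ge 1),
\]
and an analogous identity with the roles of $m$ and $M$ reversed for $t \le 1$.

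From here I would follow the variable-exponent scheme of Corollary~\ref{cor:LpxCase}. The \MA{} range $\phi^-_{B_r}(t) \le |B_r|^{-1}$ forces $t \le |B_r|^{-1/p_1}$, hence $\ln t \le (n/p_1)\ln(1/r)$, so the leading factor $t^{M^+-M^-} \le 1 + C\omega_M(2r)\ln(1/r) = 1 + o(1)$ thanks to the hypothesis $\omega_M(r)\ln(1/r) \to 0$. In the $t \le 1$ regime the lower endpoint $\phi^-_{B_r}(t) \ge \omega(r)$ gives $|\ln t| \le |\ln\omega(r)|/p_1$, and the corresponding leading factor $t^{-(m^+-m^-)}$ can be made $1 + O(\omega(r))$ by choosing $\omega$ not too small relative to $\omega_m$; a concrete choice is $\omega(r) := C\bigl(\omega_m(2r)^{1/2} + \omega_M(2r)\ln(1/r)\bigr)$, for which $\omega_m(2r)|\ln\omega(r)| = o(\omega(r))$ as $r \to 0$. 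The Hölder claim then follows because this same $\omega$ is polynomial in $r$ whenever $\omega_m(r) r^{-\tilde\beta}, \omega_M(r) r^{-\tilde\beta} \to 0$.

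The main obstacle I expect is the cross term $\frac{1 + t^{m^+-M^+}}{1 + t^{m^--M^-}}$ in the large-$t$ regime: a naive mean value estimate based on $|(M^+-m^+) - (M^--m^-)| \le \omega_m(2r) + \omega_M(2r)$ produces a factor of size $(\omega_m + \omega_M)(2r) \cdot \ln(1/r)$, which on its face would require log-Hölder continuity of $m$ as well. Recovering the asymmetric hypothesis requires exploiting the additional decay $t^{-\min(M^+-m^+, M^--m^-)}$ implicit in the numerator of this cross term (small when $t$ is large and $m, M$ are well-separated on $B_r$), combined with a case analysis depending on how close $m$ and $M$ are within $B_r$ relative to their moduli of continuity.
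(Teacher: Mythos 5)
Your setup, the two-sided bounds on $\phi^\pm_{B_r}$, the treatment of the leading factor $t^{M^+_{B_r}-M^-_{B_r}}$ via the log-H\"older hypothesis on $\max\{p,q\}$, and the handling of the regime $t\le 1$ by choosing $\omega$ large relative to $\omega_m$ (so that $\omega_m(2r)\ln(1/\omega(r))=o(\omega(r))$) all follow the route the paper intends; the paper itself only asserts that the result ``can be proved with the same methods as Corollary~\ref{cor:LpxCase}'' and leaves the details to the reader, so your write-up is already more explicit than the source. The genuine problem is exactly the cross term $\bigl(1+t^{m^+_{B_r}-M^+_{B_r}}\bigr)/\bigl(1+t^{m^-_{B_r}-M^-_{B_r}}\bigr)$ that you flag at the end. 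You correctly observe that the naive estimate costs a factor of order $\omega_m(2r)\ln(1/r)$, but the repair you propose --- exploiting the decay $t^{-\min(M^+_{B_r}-m^+_{B_r},\,M^-_{B_r}-m^-_{B_r})}$ plus a case analysis --- cannot be completed: that decay is absent precisely in the dangerous configuration where $M^+_{B_r}=m^+_{B_r}$ (the two exponents touch at the centre of the ball) while $M^-_{B_r}-m^-_{B_r}\approx\omega_m(2r)$.

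Concretely, take $q\equiv 2$ and $p(x)=2-\bigl(\ln(e/|x-x_0|)\bigr)^{-1/2}$ on a neighbourhood of $x_0$ where $p\ge 3/2$. Then $\min\{p,q\}=p$ has modulus of continuity $\omega_m(r)=(\ln(e/r))^{-1/2}\to0$ and $\max\{p,q\}\equiv2$ is constant, so the hypotheses of the corollary are satisfied. Yet on $B_r(x_0)$ one has, for $t\ge1$, $\phi^+_{B_r}(t)=2t^2$ and $\phi^-_{B_r}(t)=t^{2-\omega_m(r)}+t^2$, and at the admissible value $t_2:=(\phi^-_{B_r})^{-1}(|B_r|^{-1})\approx r^{-n/2}$ the ratio equals $2/(1+t_2^{-\omega_m(r)})\to2$, since $\omega_m(r)\ln t_2\approx\sqrt{\ln(1/r)}\to\infty$. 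So \MA{} fails, and the step you left open is not fillable without strengthening the hypotheses. Your argument does close if one rules out this configuration, for instance by assuming $\max\{p,q\}-\min\{p,q\}\ge c>0$ (then $t^{-a}\ln t\le(ea)^{-1}\le(ec)^{-1}$ converts the cross term into an error of size $O(\omega_m(2r)+\omega_M(2r))$, exactly as in your sketch) or by imposing vanishing log-H\"older control on $\max\{p,q\}-\min\{p,q\}$; as stated, the corollary's asymmetric hypothesis on the minimum is not sufficient in the large-$t$ regime.
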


This result can be proved with the same methods as Corollary~\ref{cor:LpxCase}; 
the details are left to the interested reader. 
Note that the regularity required of the minimum is lower 
than the regularity required of the maximum. This is due 
to the fact that we only require the inequality of \MA{} in the range 
$[\omega(r),1]$ where the minimum determines $\phi$, whereas the maximum is 
used in the range $[1,|B_r|]$. 

We now consider double phase problems in the sense of Zhikov and Mingione. 

\begin{corollary}[Double phase case]\label{corexdouble1}
Let $a \in C^{\omega_a}(\Omega)$ and $b \in C^{\omega_b}(\Omega)$ be non-negative with 
$0<\nu\leq a(\cdot)+b(\cdot)\leq \Lambda$ for some $0<\nu\leq \Lambda$, and 
$\psi,\xi\in\Phic\cap C^1([0,\infty))$ with $\psi', \xi'$ satisfying \azero{}, and \inc{p-1} and \dec{q-1} for some $1<p\leq q$. Suppose that $\frac \xi \psi$ is almost increasing.  
Define 
\[
\phi(x,t):=a(x)\psi(t)+b(x)\xi(t)
\] 
and, for $\epsilon\in [0,1)$, 
\[
\omega_\epsilon(r):= 
\omega_a(r)
+\omega_b(r)r^{n(1-\epsilon)} \xi\big(\psi^{-1}(r^{-n(1-\epsilon)})\big).
\]
If $\omega_\epsilon$ is bounded with $\lim_{r\to 0}\omega_\epsilon(r)=0$ 
when $\epsilon>0$, then 
$\phi$ satisfies \wMA{} with $\omega\approx \omega_\epsilon$.
\end{corollary}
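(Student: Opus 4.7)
The plan is to fix $\epsilon>0$ and exhibit an admissible $\omega \approx \omega_\epsilon$ for the \wMA{} inequality. Since $\psi,\xi\ge 0$, the supremum and infimum of $\phi$ over $B_r$ split cleanly:
\[
\phi^+_{B_r}(t) \leq a^+_{B_r}\psi(t) + b^+_{B_r}\xi(t), \qquad \phi^-_{B_r}(t) \geq a^-_{B_r}\psi(t) + b^-_{B_r}\xi(t) \geq \nu\min\{\psi(t),\xi(t)\},
\]
where the last step uses $a(x)+b(x)\ge \nu$ pointwise. Subtracting and using the continuity moduli of $a,b$ yields the key estimate
\[
\phi^+_{B_r}(t) - \phi^-_{B_r}(t) \leq \omega_a(2r)\psi(t) + \omega_b(2r)\xi(t),
\]
and the task reduces to bounding the right-hand side by a multiple of $\omega_\epsilon(r)(1 + \phi^-_{B_r}(t))$.

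Two preparatory observations. First, Proposition~\ref{prop0}(3) combined with \azero{} of $\psi',\xi'$ gives $\psi(1),\xi(1)\approx 1$; together with almost-monotonicity of $\xi/\psi$ this yields $\xi(t)\gtrsim \psi(t)$ on $[1,\infty)$ (so $\phi^-_{B_r}(t)\gtrsim \psi(t)$ there) and $\psi(t),\xi(t)\lesssim 1$ on $[0,1]$. Second, since $\psi^{-1}(r^{-n(1-\epsilon)})\ge 1$ for $r$ small, $\xi(\psi^{-1}(r^{-n(1-\epsilon)})) \gtrsim \psi(\psi^{-1}(r^{-n(1-\epsilon)})) = r^{-n(1-\epsilon)}$, so the prefactor of $\omega_b$ in $\omega_\epsilon$ satisfies $r^{n(1-\epsilon)}\xi(\psi^{-1}(r^{-n(1-\epsilon)})) \gtrsim 1$, and in particular $\omega_a(r)+\omega_b(r)\lesssim \omega_\epsilon(r)$.

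Split according to the size of $t$. When $t\leq 1$, both $\psi(t)$ and $\xi(t)$ are $\lesssim 1$, so the difference is $\lesssim \omega_a(2r)+\omega_b(2r)\lesssim \omega_\epsilon(r)$, dominated by $\omega_\epsilon(r)(1+\phi^-_{B_r}(t))$. When $t\geq 1$, first $\omega_a(2r)\psi(t)\lesssim \omega_a(2r)\phi^-_{B_r}(t)\lesssim \omega_\epsilon(r)\phi^-_{B_r}(t)$. For the harder $\xi$-term, the upper constraint $\phi^-_{B_r}(t)\le |B_r|^{-1+\epsilon}\lesssim r^{-n(1-\epsilon)}$ together with $\phi^-_{B_r}(t)\gtrsim \psi(t)$ forces $t\leq \psi^{-1}(Cr^{-n(1-\epsilon)})$. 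Almost-monotonicity of $\xi/\psi$ then yields
\[
\frac{\xi(t)}{\psi(t)} \leq L\, \frac{\xi(\psi^{-1}(Cr^{-n(1-\epsilon)}))}{Cr^{-n(1-\epsilon)}},
\]
and the constant $C$ is absorbed via Proposition~\ref{prop00}(4) (which gives \adec{1/p} of $\psi^{-1}$ from \ainc{p} of $\psi$) together with \adec{q} of $\xi$. Consequently,
\[
\omega_b(2r)\xi(t) \lesssim \omega_b(r)\, r^{n(1-\epsilon)}\xi(\psi^{-1}(r^{-n(1-\epsilon)}))\,\psi(t) \lesssim \omega_\epsilon(r)\,\phi^-_{B_r}(t).
\]
Combining both cases yields $\phi^+_{B_r}(t) - \phi^-_{B_r}(t) \lesssim \omega_\epsilon(r)(1+\phi^-_{B_r}(t))$, and replacing $\omega_\epsilon$ by a suitable non-decreasing envelope (which remains $\approx \omega_\epsilon$ under the assumed decay to zero) completes the verification of \wMA{}.

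The main obstacle is controlling $\omega_b(2r)\xi(t)$ for $t\ge 1$: the definition of $\omega_\epsilon$ is essentially forced by having to evaluate the almost-monotone ratio $\xi/\psi$ at the largest admissible $t$, namely $t\approx \psi^{-1}(r^{-n(1-\epsilon)})$, and the constant-absorption step crucially uses the assumed \ainc{p} / \adec{q} growth to propagate through both $\psi^{-1}$ and $\xi$.
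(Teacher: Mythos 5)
Your proof is correct and follows the same overall strategy as the paper's: reduce to $\phi^+_{B_r}(t)-\phi^-_{B_r}(t)\le \omega_a(2r)\psi(t)+\omega_b(2r)\xi(t)$, absorb the range $t\le 1$ into the additive term of \wMA{} via \azero{}, and for $t\ge 1$ transfer the almost-increasing ratio $\xi/\psi$ to the top of the admissible range $t\lesssim \psi^{-1}(r^{-n(1-\epsilon)})$, which is exactly what produces the second summand of $\omega_\epsilon$. The one place where you genuinely diverge is the lower bound on $\phi^-_{B_r}$: the paper splits into the cases $b^-_{B_r}\ge\nu/4$ (where $\phi^-_{B_r}\gtrsim\xi$ and the $\xi$-term is immediate) and $a^-_{B_r}\ge\nu/4$ (where it passes through the point $x_t$ realizing the infimum and a $\min$-formula for $\xi(t)/\phi^-_{B_r}(t)$), whereas you derive the uniform bound $\phi^-_{B_r}(t)\gtrsim\min\{\psi(t),\xi(t)\}\gtrsim\psi(t)$ for $t\ge1$ and argue in one stroke; this is a modest but real streamlining of the case analysis. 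One imprecision to fix: $a^-_{B_r}+b^-_{B_r}\ge\nu$ does not follow from the pointwise bound $a+b\ge\nu$ alone, since the two infima may be attained at different points, so your inequality $\phi^-_{B_r}(t)\ge\nu\min\{\psi(t),\xi(t)\}$ requires the continuity of $a$ and $b$ and $r$ small enough that $\omega_a(2r)+\omega_b(2r)\le\nu/2$, which yields the constant $\nu/2$ in place of $\nu$ --- harmless because \wMA{} only concerns small balls, but it should be stated.
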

\begin{proof}
Fix $B_r\subset\Omega$ so small that $\omega_a(2r),\omega_b(2r)\le \frac \nu 2$. 
Set $a^{\pm}:=a^{\pm}_{B_r}$, $b^{\pm}:=b^{\pm}_{B_r}$ and 
$\phi^{\pm}(t):=\phi^{\pm}_{B_r}(t).$ 
For $0\leq \epsilon<1$, suppose $t \in(0, t_2)$ with 
$t_2:= (\phi^-)^{-1}(|B_r|^{-1+\epsilon})$.  

We consider first $t>1$. 
Assume first that $b^-\ge \frac \nu4$. 
Then $\phi^-(t)\gtrsim \xi(t)\gtrsim \psi(t)$ and so 
\[
\phi^+(t)-\phi^-(t) \le (a^+-a^-)\psi(t) + (b^+-b^-) \xi(t) 
\lesssim (\omega_a(r)+\omega_b(r)) \phi^-(t). 
\]
We note that the case $b^-< \frac \nu4$ and $a^-< \frac \nu4$ cannot occur, 
since $a+b\ge \nu$ and $\omega_a(2r),\omega_b(2r)\le \frac \nu 2$.

Next, we consider $t>1$ and $a^-\ge \frac \nu4$. Then 
$\phi^-(t)\gtrsim \psi(t)$. Note that
$\phi(x,t)\approx \max\{a(x)\psi(t),b(x)\xi(t)\}$ and that by the continuity of the functions $a$ and $b$, there exists $x_t\in \overline{B_r}$ such that $\phi^-(t)=\phi(x_t,t)$.  Using these and that $\frac{\xi}{\psi}$ is almost increasing, we have 
\[
\frac{\xi(t)}{\phi^-(t)}
\approx
\min\Big\{ \frac{\xi(t)}{a(x_t)\psi(t)}, \frac{1}{ b(x_t)}\Big\}
\lesssim 
 \min\Big\{\frac{\xi(t_2)}{a(x_t)\psi(t_2) }, \frac{\xi(t_2)}{ b(x_t)\xi(t_2)}\Big\}
\lesssim 
\frac{\xi(t_2)}{\phi^-(t_2)}.
\]
Since $\frac{\psi(t)}{\phi^-(t)}\le \frac 4 \nu$ and 
$\psi(t_2)\lesssim a(x_{t_2})\psi(t_2) \lesssim \phi^{-}(t_2)
\approx r^{-n(1-\epsilon)}$, we conclude that 
\begin{align*}
\frac{\phi^+(t)-\phi^-(t)}{\phi^-(t)}
& \leq (a^+-a^-)\frac{\psi(t)}{\phi^-(t)} +(b^+-b^-)\frac{\xi(t)}{\phi^-(t)}\\
&\lesssim \omega_a(2r) +\omega_b(2r) \frac{\xi(t_2)}{\phi^-(t_2)} \\
&\lesssim \omega_a(r) +\omega_b(r) r^{n(1-\epsilon)} \xi(\psi^{-1}(r^{-n(1-\epsilon)})).
\end{align*}
We note that the factor multiplying $\omega_b(r)$ in the last expression is 
greater than $c>0$ depending on the parameters, so it can absorb the 
$+\omega_b(r)$ from the other cases to give $\omega_\epsilon$ in the statement of the result. 

The necessary inequality has been established for all cases when $t>1$. 
We next consider $t\le 1$. 
By \azero{} of $\psi$ and $\xi$, 
\[
\phi^+(t)-\phi^-(t) \le (a^+-a^-)\psi(t) + (b^+-b^-) \xi(t) \lesssim \omega_a(r)+\omega_b(r). 
\]
We use this as the additive term ``$+\omega(r)$'' in the definition of 
\wMA{} to cover small $t$. This concludes the proof of \wMA{}. 
\end{proof}

\begin{remark}
In the previous proof, we used the additive error ``$+\omega(r)$'' for \wMA{} 
to handle the case $t\le 1$. If $a\equiv 1$, then this is not needed, 
and we have also the following conclusion: 
if $\omega_0$ is bounded with $\lim_{r\to 0}\omega_0(r)=0$, then 
$\phi$ satisfies \MA{}.
\end{remark}

Suppose that $\xi(t)=\psi(t)\ln(e+t)$ and $a\equiv 1$ in 
Corollary~\ref{corexdouble1}. Then we have 
\[
\xi\big(\psi^{-1}(r^{-n(1-\epsilon)})\big)
=
r^{-n(1-\epsilon)} \ln\big(e+\psi^{-1}(r^{-n(1-\epsilon)})\big)
\approx
r^{-n(1-\epsilon)} \ln(e+\tfrac1r)
\]
since $\psi$ satisfies \inc{p} and \dec{q}. We see that the degenerate 
double phase functional satisfies \MA{} if $b$ is vanishing $\log$-H\"older continuous.

From Corollary~\ref{corexdouble1}, we obtain sharp regularity conditions 
for $\phi$ satisfying particular structures of double phase with power-functions.

\begin{corollary}\label{corexdouble2} 
Let $1<p\leq q$, $\beta\in(0,1]$, 
and $a\in C^{\omega_a}$ and $b\in C^{0,\beta}$ be non-negative. 
Define $\gamma_\epsilon:=\beta-\tfrac{n(q-p)(1-\epsilon)}{p}$, $\epsilon\geq 0$.
\begin{enumerate}
\item
Let $\phi(x,t)=t^p+b(x)t^q$.  \\
If $\frac qp< 1+\frac{\beta}{n}$, then $\phi$ satisfies \MA{} with 
$\omega(r)\approx r^{\gamma_0}$.\\
If  $\frac qp\le  1+\frac{\beta}{n}$, then 
$\phi$ satisfies 
\wMA{} with $\omega (r)\approx r^{\gamma_\epsilon}$.
\item
Let $\phi(x,t)=a(x)t^p+t^q$. 
Then $\phi$ satisfies \wMA{} with $\omega(r)\approx \omega_a(r)$.
\item
Let $a(x)t^p+b(x)t^q$ with $\nu\le a+b\le \Lambda$. 
If  $\frac qp\le 1+\frac{\beta}{n}$, then $\phi$ satisfies 
\wMA{} with $\omega(r)\approx r^{\gamma_\epsilon}+\omega_a(r)$.
\end{enumerate}
\end{corollary}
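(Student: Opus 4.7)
The plan is to derive each of the three assertions as a direct consequence of Corollary~\ref{corexdouble1} with the choice $\psi(t)=t^p$ and $\xi(t)=t^q$. These $\Phi$-functions lie in $\Phic\cap C^1([0,\infty))$, their derivatives satisfy \azero{}, \inc{p-1} and \dec{q-1}, and the ratio $\xi/\psi=t^{q-p}$ is non-decreasing since $q\ge p$, so the structural hypotheses of that corollary are met.

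First I would compute the key quantity appearing in the formula for $\omega_\epsilon$. Since $\psi^{-1}(s)=s^{1/p}$ we have $\xi(\psi^{-1}(s))=s^{q/p}$, and therefore
\begin{equation*}
r^{n(1-\epsilon)}\,\xi\bigl(\psi^{-1}(r^{-n(1-\epsilon)})\bigr)
= r^{n(1-\epsilon)(1-q/p)}
= r^{-n(1-\epsilon)(q-p)/p}.
\end{equation*}
Combined with $\omega_b(r)\approx r^\beta$ when $b\in C^{0,\beta}$, the function supplied by Corollary~\ref{corexdouble1} simplifies to
\begin{equation*}
\omega_\epsilon(r)\ \approx\ \omega_a(r)+r^{\beta-n(1-\epsilon)(q-p)/p}
\ =\ \omega_a(r)+r^{\gamma_\epsilon}.
\end{equation*}

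For (1), take $a\equiv 1$ so that $\omega_a\equiv 0$ and apply the remark following Corollary~\ref{corexdouble1}, which is valid precisely when $a\equiv 1$: then $\omega_0(r)\approx r^{\gamma_0}$, and if $q/p<1+\beta/n$ one has $\gamma_0>0$, so $\omega_0$ is bounded with $\omega_0(r)\to 0$, yielding \MA{} with $\omega\approx r^{\gamma_0}$. In the borderline range $q/p\le 1+\beta/n$, I choose $\epsilon\in(0,1)$ so that $\gamma_\epsilon>0$ (possible for every $\epsilon>0$ when $q/p=1+\beta/n$, since then $\gamma_\epsilon=\beta\epsilon$), and the main statement of Corollary~\ref{corexdouble1} delivers \wMA{} with $\omega\approx r^{\gamma_\epsilon}$. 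For (2), swap roles by setting $b\equiv 1$ in the template, so $\omega_b\equiv 0$ and the second summand vanishes, leaving $\omega_\epsilon\approx\omega_a$; continuity of $a$ then gives \wMA{} with $\omega\approx\omega_a$. Part (3) is just the superposition: $\omega_\epsilon(r)\approx\omega_a(r)+r^{\gamma_\epsilon}$, which is bounded and vanishes at $0$ whenever $\gamma_\epsilon>0$, i.e.\ whenever $q/p\le 1+\beta/n$ and $\epsilon$ is chosen appropriately.

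There is no serious obstacle here: the hard analytic work is absorbed into Corollary~\ref{corexdouble1}, and only the elementary computation of $\xi\circ\psi^{-1}$ together with the resulting exponent is required. The one delicate point is the equality case $q/p=1+\beta/n$, where $\gamma_0=0$ forces us to use \wMA{} with a strictly positive $\epsilon$ in order to move $\gamma_\epsilon$ off zero; this is exactly the motivation, emphasized in the introduction, for introducing \wMA{} alongside \MA{}.
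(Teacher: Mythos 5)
Your proposal is correct and follows exactly the route the paper intends: the paper itself introduces Corollary~\ref{corexdouble2} with the phrase ``From Corollary~\ref{corexdouble1}, we obtain sharp regularity conditions for $\phi$ satisfying particular structures of double phase with power-functions'' and gives no further proof, so the expected argument is precisely the specialization $\psi(t)=t^p$, $\xi(t)=t^q$, followed by the elementary computation $r^{n(1-\epsilon)}\xi(\psi^{-1}(r^{-n(1-\epsilon)}))=r^{-n(1-\epsilon)(q-p)/p}$ and the observation that $\gamma_\epsilon>0$ when $q/p<1+\beta/n$ (with $\epsilon=0$) or $q/p\le 1+\beta/n$ (with $\epsilon>0$). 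You also correctly invoke the remark after Corollary~\ref{corexdouble1} to upgrade \wMA{} to \MA{} in the strict-inequality case of part (1), which is the one place the distinction between the two corollaries matters.
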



\appendix
\section{\texorpdfstring{$C^{1,\alpha}$}{C-1-alpha} Regularity for autonomous problems}
\label{app:a}

In this section, we prove Lemma \ref{lemHolder}. We follow the ideas in \cite{DieSV17,Le1}. 
In fact, it is almost enough to replace the map $t\mapsto t^p$ by the map 
$t\mapsto \phi(t)$ in the proof in \cite{Le1}. However, for completeness, we present the proof. Suppose $\phi\in C^1([0,\infty))\cap C^2((0,\infty))$ and $\phi'$ satisfies \inc{p-1} and \dec{q-1} for some $1<p\leq q$. We first consider the following non-degenerate problem 
for $\epsilon> 0$:
\begin{equation}\label{eqep}
\mathrm{div}\left(\frac{\phi_\epsilon(|Du_{\epsilon}|)}{|Du_\epsilon|}Du_{\epsilon}\right)=0\quad \text{in }\ \Omega, \quad \text{where }\ \phi_\epsilon (t):=\int_{0}^t \frac{\phi'(\epsilon+s)s}{\epsilon+s}\,ds,
\end{equation}
which is the Euler-Lagrange equation of the minimization problem
\begin{equation}\label{functionalepsilon}
\min_w \int_{\Omega}\phi_\epsilon (|Dw|)\,dx. 
\end{equation}
(In Lemma~\ref{lemHolder}, $\Omega=B_r$.) By the definition of $\phi_\epsilon$ we have 
$$
\frac{\phi_\epsilon'(t)}{t}=\frac{\phi'(\epsilon+t)}{\epsilon+t}, \quad \text{so that } \ \lim_{t\to 0^+}\frac{\phi_\epsilon'(t)}{t}=
\frac{\phi'(\epsilon)}{\epsilon}>0.
$$ 
Hence \eqref{eqep} is non-degenerate. We emphasize that all hidden constants in $\approx$ and $\lesssim$ in this appendix depend only on $n$, $p$ and $q$, but are independent of $\epsilon$. We observe by the first equality above and \inc{p-1} and \dec{q-1} of $\phi'$ that
\begin{equation}\label{Ap01}\begin{aligned}
\phi''_{\epsilon}(t)
&=\frac{\phi'(\epsilon+t)}{\epsilon+t}\left(1 +\left(\frac{\phi''(\epsilon+t)}{ (\epsilon+t)\phi'(\epsilon+t)}-1\right)\frac{t}{\epsilon+t}\right)\\
& \ge \frac{\phi'_\epsilon(t)}{t} \left(1+(p-2)\frac{t}{\epsilon+t}\right) \ge \min\{1,p-1\}\frac{\phi'_\epsilon(t)}{t} 
\end{aligned}\end{equation}
and
\begin{equation}\label{Ap02}
\phi''_{\epsilon}(t)\le \frac{\phi'_\epsilon(t)}{t} \left(1+(q-2)\frac{t}{\epsilon+t}\right)\le \max\{1,q-1\}\frac{\phi'_\epsilon(t)}{t}. 
\end{equation}
Therefore, $\phi_\epsilon'$ satisfies \inc{\min\{1,p-1\}} and \dec{\max\{1,q-1\}}, which implies that
\begin{equation}\label{Ap12}
t\phi_\epsilon''(t)\approx \phi_\epsilon'(t),\quad t\phi_\epsilon'(t)\approx \phi_\epsilon(t), \quad \frac{\phi_\epsilon (t)}{t^2}\approx\frac{\phi_\epsilon' (t)}{t} =\frac{\phi'(\epsilon+t)}{\epsilon+t}.
\end{equation}
In view of \cite{DieSV17}, in particular Lemmas~5.7 and 5.8, we have that $u_{\epsilon}\in W^{2,2}_{\loc}(\Omega)$ and $\phi_\epsilon(|Du_\epsilon|)\in W^{1,2}_{\loc}(\Omega)$ if $\epsilon>0$ and that for any $B_{2\rho}\Subset \Omega$ and $\epsilon\geq0$,
\begin{equation}\label{Appendix1}
\sup_{B_{\rho}} \phi_\epsilon (|Du_\epsilon|)
\lesssim
\fint_{B_{2\rho}} \phi_{\epsilon} (|Du_\epsilon|)\,dx.
\end{equation}
Here $u_0=u$ and $\phi_0=\phi$.

Fix $\epsilon>0$ and $B_{2\rho}\Subset \Omega$. From now on, for convenience, we shall simply write 
\begin{equation}\label{notationep}
u=u_\epsilon\quad\text{and}\quad v=\phi_\epsilon(|Du|)=\phi_\epsilon(|Du_\epsilon|).
\end{equation} 
We first notice from \eqref{eqep} and $u\in W^{2,2}_{\loc}(\Omega)$ that 
\[
\mathrm{div}\left(\frac{\phi_\epsilon'(|Du|)}{|Du|}Du\right) = \mathrm{div}\left(\frac{\phi'(\epsilon+|Du|)}{\epsilon+|Du|}Du\right)=\sum_{i,j=1}^n a_{ij} u_{x_ix_j}=0\qquad \text{a.e. in }\ \Omega,
\]
where $a_{ij}=a_{ij}(Du)$, $b_{ij}=b_{ij}(Du)$, 
$$
a_{ij}(z):=\frac{\phi'(\epsilon+|z|)}{\epsilon+|z|} b_{ij}(z)=\frac{\phi_\epsilon'(|z|)}{|z|} b_{ij}(z)
$$
and
$$
b_{ij}(z):=\left(\frac{\phi''(\epsilon+|z|)(\epsilon+|z|)}{\phi' (\epsilon+|z|)}-1\right)\frac{z_iz_j}{(\epsilon+|z|)|z|}+\delta_{ij}
$$
for $z\in\Rn$ ($\delta_{ij}$ is the kronecker delta, i.e.\ $\delta_{ij}=0$ if $i\neq j$ and $\delta_{ij}=1$ if $i=j$). As in \eqref{Ap01} and \eqref{Ap02} along with the fact that $\sum_{i,j}z_iz_j\eta_i\eta_j=(z\cdot \eta)^2$, we conclude that
\begin{equation}\label{ellipticityb}
\min\{1,p-1\} |\eta|^2\leq \sum_{i,j=1}^n b_{ij}(z)\eta_i\eta_j \leq \max\{1,q-1\} |\eta|^2\quad \text{for all }\ z,\eta\in\Rn.
\end{equation}

Consider the weak form of \eqref{eqep} and a unit vector $\nu\in S^{n-1}$. 
We see that 
\[
0=-\int_{\Omega}\frac{\phi_\epsilon'(|Du|)}{|Du|}Du\cdot D(
\zeta_\nu)\,dx
=\int_{\Omega}\sum_{i=1}^\infty \left(\frac{\phi_\epsilon'(|Du|)}{|Du|}u_{x_i}\right)_{\nu}
\zeta_{x_i}\,dx 
=\int_{\Omega}\sum_{i,j=1}^\infty a_{ij}u_{x_j\nu}\zeta_{x_i}\,dx
 \]
for any $\zeta\in C^\infty_0(\Omega)$, where the subscript $\nu$ indicates 
directional derivatives. Thus we have shown that 
\begin{equation}\label{etaeq}
\sum_{i,j=1}^n(a_{ij} u_{\nu x_j})_{x_i}=0
\end{equation}
in the weak sense. In addition, by the definition of $v$, cf.\ \eqref{notationep}, we have 
\[
v_{x_j}=\frac{\phi_\epsilon'(|Du|)}{|Du|}\sum_{k=1}^n u_{x_k}u_{x_kx_j},
\]
so that $b_{ij}v_{x_j}=\sum_{k=1}^na_{ij}u_{x_kx_j}u_{x_k}$. 
We conclude, with \eqref{etaeq} for the second equality, that
\begin{align*}
-\int_{B_\rho}\sum_{i,j=1}^nb_{ij}v_{x_j} \zeta_{x_i}\,dx 
&=-\sum_{k=1}^n\int_{B_\rho}\sum_{i,j=1}^na_{ij}u_{x_kx_j} (u_{x_k}\zeta)_{x_i} \,dx 
+ \int_{B_\rho}\sum_{i,j,k=1}^n a_{ij}u_{x_kx_j}u_{x_kx_i}\zeta \,dx\\
&=\int_{B_\rho}\sum_{i,j,k=1}^n a_{ij}u_{x_kx_j}u_{x_kx_i}\zeta \,dx
\end{align*}
for all $\zeta\in C^{\infty}_0(B_{\rho})$ with $B_\rho\Subset\Omega$. Therefore, we have 
\begin{equation}\label{operatorL}
\mathcal L v:= \sum_{i,j=1}^n(b_{ij}v_{x_i})_{x_j} = \sum_{i,j,k=1}^n a_{ij}u_{x_kx_j}u_{x_kx_i}=:g
\end{equation}
in the weak sense.
Moreover, by \eqref{ellipticityb} with $\eta=D(u_{x_k})$ and \eqref{Ap12}, we have 
\begin{equation}\label{gcondition}
 g\approx \frac{\phi_\epsilon(|Du|)}{|Du|^2} |D^2u|^2,
\end{equation}
where $|D^2u|^2 := \sum_{i,j}(u_{x_ix_j})^2$. 
In the same way as in \cite[Lemma 1]{Le1} with $v=\phi_\epsilon(|Du|)$, and $v^{-1/p}$ replaced by $[\phi_\epsilon^{-1}(v)]^{-1}=|Du|^{-1}$, we obtain 
for any $B_{4\rho}\Subset \Omega_r$, that
\begin{equation}\label{highg}
\fint_{B_{\rho}}g^{1+\delta}\, dx\lesssim\left(\fint_{B_{4\rho}}g\,dx\right)^{1+\delta}.
\end{equation}

Next, set 
$$
v_0:=v=\phi_\epsilon(|Du|)\quad \text{and}\quad v_k:=\frac{v}{\phi^{-1}_\epsilon(v)} u_{x_k}=\frac{\phi_\epsilon(|Du|)}{|Du|} u_{x_k},\ \ k=1,\dots,n.
$$
Then 
\begin{equation}\label{v0xj}
 v_{0,x_j} =v_{x_j}=\frac{\phi_\epsilon'(|Du|) Du_{x_j}\cdot Du}{|Du|},
\end{equation}
and, for $k=1,\dots n$, 
\begin{align*}
v_{k,x_j}
&=
\frac{v}{\phi_\epsilon^{-1}(v)}u_{x_kx_j}+ \left(1-\frac{v}{\phi_\epsilon^{-1}(v)\phi_\epsilon'(\phi_\epsilon^{-1}(v))}\right)\frac{v_{x_j}}{\phi_\epsilon^{-1}(v)}u_{x_k}\\
&=
\frac{\phi_\epsilon(|Du|)}{|Du|}u_{x_kx_j}+ \left(1-\frac{\phi_\epsilon(|Du|)}{|Du|\phi_\epsilon'(|Du|)}\right)\frac{\phi_\epsilon'(|Du|)Du_{x_j}\cdot Du}{|Du|^2}u_{x_k}.
\end{align*}
Here we note from \inc{\min\{2,p\}} and \dec{\max\{2,q\}} of $\phi_\epsilon$ that
$$
0<1-\frac{1}{\min\{2,p\}}\leq 
\underbrace{1-\frac{\phi_\epsilon(|Du|)}{|Du|\phi'_\epsilon(|Du|)}}_{=:A(|Du|)}
\leq 1-\frac{1}{\max\{2,q\}}<1.
$$
Then the previous expression for the partial derivatives implies that 
$v_k\in W^{1,2}_{\loc}(\Omega)$ since $u\in W^{2,2}_{\loc}(\Omega)$, $v=\phi_\epsilon(|Du|)\in W^{1,2}_{\loc}(\Omega)$ and $Du\in L^\infty_{\loc}(\Omega)$. Moreover, 
\begin{align*}
\sum_{k=1}^n |Dv_{k}|^2=\sum_{k,j=1}^n v_{k,x_j}^2 
&= \frac{\phi_\epsilon(|Du|)^2}{|Du|^2}|D^2u|^2
+A(|Du|)^2\frac{\phi_\epsilon'(|Du|)^2}{|Du|^4} \sum_{k,j=1}^n (Du_{x_j}\cdot Du)^2u_{x_k}^2\\
&\quad + 2 A(|Du|) \frac{\phi_\epsilon(|Du|)\phi_\epsilon'(|Du|)}{|Du|^3} 
\sum_{j=1}^n \left[Du_{x_j}\cdot Du \sum_{k=1}^n u_{x_kx_j} u_{x_k}\right].
\end{align*}
Since
$A(|Du|)\approx 1$, $\sum_{k,j}(Du_{x_j}\cdot Du)^2u_{x_k}^2\le |Du|^4|D^2u|^2$
and 
$Du_{x_j}\cdot Du \sum_{k} u_{x_kx_j} u_{x_k}=(Du_{x_j}\cdot Du)^2\geq0$,
we obtain
\begin{equation}\label{gv}
gv \approx\frac{\phi_\epsilon(|Du|)^2}{|Du|^2}|D^2u|^2 \leq \sum_{k=1}^n |Dv_{k}|^2\lesssim \frac{\phi_\epsilon(|Du|)^2}{|Du|^2}|D^2u|^2\approx gv.
\end{equation}
Using the expression for the partial derivative $v_{k,x_j}$ and \eqref{etaeq}, we see that for $k=1,2,\dots,n$,
\begin{equation}\label{Ap13}\begin{aligned}
\mathcal L v_k
&= 
\sum_{i,j=1}^n (b_{ij}v_{k,x_j})_{x_i}\\
&= 
\sum_{i,j,l=1}^n \left(a_{ij} u_{x_lx_j} \left[\frac{|Du|v\delta_{lk}}{\phi_\epsilon'(|Du|) |Du|} + \frac{ u_{x_k}u_{x_l}}{|Du|}-\frac{|Du|vu_{x_k}u_{x_l}}{\phi_\epsilon'(|Du|) |Du|^3}\right]\right)_{x_i}\\
&= 
\frac{\phi_\epsilon'(|Du|)}{|Du|} \sum_{i,j,l=1}^n b_{ij} u_{x_lx_j} \left( \frac{ u_{x_k}u_{x_l}}{|Du|}+ \frac{v\delta_{lk}}{\phi_\epsilon'(|Du|)} -\frac{vu_{x_k}u_{x_l}}{\phi_\epsilon'(|Du|) |Du|^2}\right)_{x_i} \\
&=: g_k
\end{aligned}\end{equation}
in the weak sense for test functions in $C^\infty_0(B_{\rho})$. 
Note that $g_k$ is formulated in terms of first and second partial derivatives of $u$.
We use the estimates $|u_{x_j}|\le |Du|$ and $|u_{x_i x_j}|\le |D^2 u|$ to 
conclude that 
\[
\left| \left(\frac{ u_{x_k}u_{x_l}}{|Du|}\right)_{x_i}\right|= \left|\frac{ u_{x_kx_i}u_{x_l}+u_{x_k}u_{x_lx_i}}{|Du|}-\frac{ u_{x_k}u_{x_l}\sum_{m=1}^nu_{x_mx_i}}{|Du|^2}\right|
\lesssim |D^2u|. 
\]
Similarly, using also $t \phi_\epsilon'(t) \approx \phi_\epsilon(t)$ from \eqref{Ap12}, 
we estimate the other multipliers of $b_{ij} u_{x_lx_j}$ by $|D^2u|$, as well. 
Since $b_{ij}\approx 1$ by \eqref{ellipticityb}, we conclude by \eqref{gcondition} 
that 
\begin{equation}\label{Ap14}
|g_k| 
\lesssim
\left| \frac{\phi_\epsilon'(|Du|)}{|Du|} \sum_{i,j,l=1}^n b_{ij}u_{x_lx_j} 
|D^2u| \right|\lesssim \frac{\phi_\epsilon'(|Du|)}{|Du|} |D^2u|^2 \approx g. 
\end{equation}

From now on, fix $B_{32s}\Subset \Omega$. For $k=0,1,\dots,n$, let $h_k\in W^{1,2}(B_{s})$ be a weak solution to
$$
\mathcal L h_k=(b_{ij} h_{k,x_j})_{x_i} =0\quad \text{in }\ B_s,\quad\quad h_k=v_k\quad \text{on }\ \partial B_{s},
$$
and let
$$
w_k=h_k-v_k\in W^{1,2}_0(B_{s}).
$$
Then, by De Giorgi's theory for linear equation, see for instance \cite[Theorem 7.7]{Gi1}, we have that, for any concentric balls $B_\rho\subset B_s$ with $0<\rho\leq s$,
\begin{equation}\label{Ap15}
\fint_{B_\rho}|Dh_k|^2\,dx \lesssim \left(\frac{\rho}{s}\right)^{\beta-2}\fint_{B_{s}} |Dh_k|^2\,dx \lesssim \left(\frac{\rho}{s}\right)^{\beta-2}\fint_{B_{s}} |Dv_k|^2\,dx
\end{equation}
for some $\beta=\beta(n,p,q)\in(0,2)$. In addition, by \eqref{ellipticityb}, 
$\mathcal L h_k=0$, \eqref{operatorL}, \eqref{Ap13} and \eqref{Ap14},
\begin{align*}
\int_{B_{s}}|Dw_k|^2\,dx & \lesssim \int_{B_{s}}\sum_{i,j=1}^n b_{ij}(h_k-v_k)_{x_j} w_{k,x_i}\,dx\\
 &= -\int_{B_{s}}\sum_{i,j=1}^n b_{ij}v_{k,x_j} w_{k,x_i}\,dx= \int_{B_{s}}g_k w_k\,dx
\lesssim \int_{B_{s}}g |w_k|\,dx.
\end{align*}
Here we interpret $g_0:=g$. (Note that $w_k\not\in C^\infty_0(B_\rho)$, but we can use $w_k$ as a test function by an approximation argument.)

Hence applying H\"older's inequality and \eqref{highg} we have that 
$$
\fint_{B_{s}}|Dw_k|^2\,dx\lesssim \left(\fint_{B_{4s}}g\,dx \right) \left(\fint_{B_{s}}|w_k|^{1+1/\delta}\,dx \right)^{\frac{1}{1+1/\delta}}.
$$
Furthermore, the same arguments used to prove \cite[(3.8) and (3.13)]{Le1} (here we need 
\eqref{v0xj} and \eqref{gv}) yield that 
$$
\fint_{B_{4s}}g\,dx \lesssim \frac{1}{M(4s)}\sum_{k=0}^n\fint_{B_{8s}}|Dv_k|^2\,dx,\quad \text{where }\ M(\rho) :=\sup_{B_{\rho}} v,
$$
and 
$$
\frac{1}{M(4s)}\left(\fint_{B_s}|w_k|^{1+1/\delta}\right)^{\frac{1}{1+\delta}}\leq c\left(1-\frac{M(\rho)}{M(8s)}\right)^{\frac{1}{1+\delta}}\quad \text{for all }\ \rho\in(0, s].
$$
Therefore, combining the last three estimates and \eqref{Ap15} we have, for $\rho\in(0,s]$,
that 
$$
\sum_{k=0}^n\int_{B_\rho} |Dv_k|^2\, dx 
\lesssim\left(\left(\frac{\rho}{s}\right)^{n-2+\beta}+\left(1-\frac{M(\rho)}{M(8s)}\right)^{\frac{1}{1+\delta}} \right)\sum_{k=0}^n\int_{B_{8s}} |Dv_k|^2\, dx.
$$
Finally by a standard iteration argument as in \cite[p. 857]{Le1} and Poincar\'e's inequality, we can find $\beta_1\in(0,\beta)$ such that 
$$
\sum_{k=0}^n\fint_{B_\rho}|v_k-(v_k)_{B_\rho}|^2\,dx \lesssim \rho^2\sum_{k=0}^n\fint_{B_\rho} |Dv_k|^2\, dx
\lesssim \left(\frac{\rho}{s}\right)^{\beta_1}M(4s)^2
$$
for any $\rho\in(0,s]$. With the definition of $v_k$, this implies that, for any $x,y\in B_s$,
$$
\Big|\frac{\phi_{\epsilon}(|Du(x)|)}{|Du(x)|}u_{x_k}(x)-\frac{\phi_{\epsilon}(|Du(y)|)}{|Du(y)|}u_{x_k}(y)\Big| 
= |v_k(x)-v_k(y)| 
\lesssim \left(\frac{|x-y|}{s}\right)^{\frac{\beta_1}{2}}M(8s).
$$
We use Proposition~\ref{prop000}(1) with $\phi_{\epsilon}$ in place of $\phi'$
to conclude that 
\[
\Big|\frac{\phi_{\epsilon}(|z_1|)}{|z_1|}z_1-\frac{\phi_{\epsilon}(|z_2|)}{|z_2|}z_2\Big| 
\gtrsim 
\frac{\phi_{\epsilon}(|z_1|+|z_2|)}{|z_1|+|z_2|} |z_1-z_2|
\ge 
\phi_{\epsilon}(|z_1-z_2|)
\]
where we used $|z_1|+|z_2|\ge |z_1-z_2|$ and \inc{1} of $\phi_\epsilon$ in the last step. 
Applying this in the previous estimate with $z_1=Du(x)$ and $z_2=Du(y)$, we find that 
\[
\phi_{\epsilon}(|Du(x)-Du(y)|) 
\lesssim \left(\frac{|x-y|}{s}\right)^{\frac{\beta_1}{2}}M(8s).
\]

We now undo the convention of omitting $\epsilon$ from \eqref{notationep} for the final part. 
Inserting \eqref{Appendix1} with the definition of $M(\rho)$ into the above estimate, 
we have that 
$$
\phi_\epsilon\big(|Du_\epsilon(x)-Du_\epsilon(y)|\big)
\lesssim 
\left(\frac{|x-y|}{s}\right)^{\frac{\beta_1}{2}}\fint_{B_{16s}}\phi_\epsilon(|Du_\epsilon|)\,dx.
$$
At this point, we restrict our attention to the case $\Omega=B_{32s}$ and 
consider minimizers $u_\epsilon$ of \eqref{functionalepsilon} with the boundary 
value restriction $w\in u+W^{1,\phi_\epsilon}_0(B_{32s})$. We apply $\phi_\epsilon^{-1}$ to both sides and use \dec{\max\{2,q\}} of $\phi_\epsilon$, to get that
$$
|Du_\epsilon(x)-Du_\epsilon(y)|\lesssim \left(\frac{|x-y|}{s}\right)^{\alpha_0}\phi_\epsilon^{-1}\left(\fint_{B_{16s}}\phi_\epsilon(|Du|)\,dx \right)
$$
for some $\alpha_0\in(0,1)$. Letting $\epsilon\to 0$, we can remove $\epsilon$ in the above estimate as in the proof of \cite[Lemma 4.9]{DieSV17}. 
Finally, by the same argument as in the proof of Lemma~\ref{reverse} with \eqref{Appendix1} and Jensen's inequality for the concave function equivalent to $\phi^{1/q}$ (see Proposition~\ref{prop00}(2)) we also see that
\begin{equation}\label{AApf1}
\fint_{B_{16s}}\phi(|Du|)\,dx \lesssim \left(\fint_{B_{32s}}\phi(|Du|)^{\frac{1}{q}}\,dx\right)^q\lesssim \phi\left(\fint_{B_{32s}} |Du|\,dx\right).
\end{equation}
These imply, for any $x,y\in B_s$ and $B_{32s}\Subset \Omega$, that
$$
|Du(x)-Du(y)|\leq c \left(\frac{|x-y|}{s}\right)^{\alpha_0}\fint_{B_{32s}}|Du|\,dx,
$$ 
which shows \eqref{C1alpha0}. In addition, from \eqref{Appendix1} and \eqref{AApf1}, we also have \eqref{Lip}. 


\section{Weighted estimate for autonomous problems}\label{app:b}

In this appendix, we discuss the global weighted estimate \eqref{weightedestimate}. 
For global regularity estimates, the regularity of the boundary of the domain is a 
delicate issue. In particular, the Reifenberg flat condition is considered sharp 
for Calder\'on--Zygmund type estimates for problems in divergence form. 
Hence we shall give a result for domains satisfying the this condition. 
We say that a bounded domain $\Omega$ is \textit{$(\delta,R)$-Reifenberg flat} for 
some small $\delta\in (0,1)$ and $R>0$ if for any $y\in\partial\Omega$ and $r\in(0,R]$ 
there exists an isometric coordinate system with the origin at $y$, say $(x_1,\dots,x_n)$, such 
that in this coordinate system, 
$$
B_r(0)\cap \{x_n>\delta r\} \ \subset\ \Omega\cap B_r(0)\ \subset\ B_r(0)\cap \{x_n>-\delta r\}. 
$$
Note that a domain with Lipschitz boundary with Lipschitz semi-norm $\delta\in(0,1)$ 
is $(\delta,R)$-Reifenberg flat for some $R>0$ and that the ball $B_r$ is $(\delta,2\delta r)$-Reifenberg flat for any $\delta\in (0,\frac{1}{2})$.

For $1\leq s\leq \infty$, let $A_s$ be the Muckenhoupt class. 
In particular, for $1< s< \infty$, a weight $w$ (i.e., $w\in L^1_{\loc}(\Rn)$ and $w\geq 0$) is an $A_s$-weight, $w\in A_s$, if 
$$
[w]_{A_s}:=\sup_{B\subset \Rn} \left(\fint_B w\,dx\right)\left(\fint_{B}w^{-\frac{1}{s-1}}\,dx\right)^{s-1} <\infty.
$$ 
For the properties of the $A_s$ class, we refer to \cite{Gra1}. 

\begin{theorem}\label{thmAB}
Let $\phi\in \Phic\cap C^1([0,\infty))\cap C^2((0,\infty))$ with $\phi'$ satisfying \inc{p-1} and \dec{q-1} for some $1<p\leq q<\infty$, and let $w\in A_{s}$ for some $s\in (1,\infty)$. There exists a small $\delta=\delta(n,p,q,s,[w]_{A_s})\in(0,1)$ such that if $\Omega$ is $(\delta,R)$-Reifenberg flat for some $R>0$, $v_0\in W^{1,\phi}(\Omega)$ satisfies $\phi(|Dv_0|)\in L^s_w(\Omega)$ and $v\in W^{1,\phi}(\Omega)$ is the weak solution to 
\begin{equation}\label{weqvv0}
\div \left(
\frac{\phi'(|Dv|)}{|Dv|}Dv\right)=0\quad\text{in}\ \ \Omega\quad\text{with}\quad v=v_0 \ \ \text{on}\ \partial \Omega,
\end{equation}
then 
$$
\int_{\Omega}\phi(|Dv|)^s\, w\, dx \leq c \int_{\Omega}\phi(|Dv_0|)^s\,w\, dx
$$
for some $c=c(n,p,q,s,[w]_{A_s},\frac{\diam(\Omega)}R)>0$. In particular, 
letting $\Omega=B_r$ we have \eqref{weightedestimate}, since 
$B_r$ is $(\delta,2\delta r)$-Reifenberg flat.
\end{theorem}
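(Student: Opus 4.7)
\medskip
\noindent\textbf{Proof plan for Theorem~\ref{thmAB}.}

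The plan is to follow the well-established Caffarelli--Peral / Acerbi--Mingione comparison strategy, combined with a maximal-function good-$\lambda$ argument that handles the weight through the $A_s$-reverse H\"older property. The natural reduction is to first establish an unweighted level-set/density estimate for $\phi(|Dv|)^{s_0}$ (with $s_0>1$ obtained from higher integrability) and then extrapolate to arbitrary $A_s$-weights. Since $v$ itself solves the autonomous equation \eqref{weqvv0} on all of $\Omega$, the interior part of the argument is immediate from Lemma~\ref{lemHolder}: for any $B_{2\rho}(x_0)\Subset\Omega$ one has the Lipschitz/Harnack-type bound
\[
\sup_{B_\rho(x_0)} \phi(|Dv|) \lesssim \fint_{B_{2\rho}(x_0)}\phi(|Dv|)\, dx,
\]
so the heart of the proof lies at the boundary of $\Omega$.

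For the boundary analysis, I would fix $y\in\partial\Omega$ and a scale $r<R/8$, use the $(\delta,R)$-Reifenberg flatness to rotate coordinates and trap $\Omega\cap B_r(y)$ between two half-balls, and then introduce a comparison function $h$ solving the autonomous Dirichlet problem
\[
\operatorname{div}\!\Bigl(\tfrac{\phi'(|Dh|)}{|Dh|}Dh\Bigr)=0 \text{ in } B_r^+(y), \qquad h=v-v_0 \text{ on } \partial B_r^+(y)\cap\Omega, \qquad h=0 \text{ on the flat part}.
\]
The boundary version of Lemma~\ref{lemHolder} (which one obtains by a standard even-reflection across the flat face, so that $h$ becomes a solution of an equivalent autonomous problem on a full ball) gives Lipschitz regularity of $Dh$ up to the flat boundary with the bound
\[
\sup_{B_{r/2}^+(y)}\phi(|Dh|)\ \lesssim\ \fint_{B_r^+(y)}\phi(|Dh|)\, dx.
\]
Testing the equations of $v$ and $h$ against $v-v_0-h$ and using the monotonicity inequality from Proposition~\ref{prop000}(1) yields the boundary comparison estimate
\[
\fint_{\Omega\cap B_r(y)} \phi(|Dv-Dh|)\,dx\ \lesssim\ \sigma(\delta)\fint_{\Omega\cap B_{2r}(y)}\phi(|Dv|)\,dx + \fint_{\Omega\cap B_{2r}(y)}\phi(|Dv_0|)\,dx,
\]
where $\sigma(\delta)\to 0$ as $\delta\to 0$ accounts for the Reifenberg error.

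The next step is to combine the interior Lipschitz bound and the boundary comparison via a Vitali-type stopping-time decomposition to produce, for $s_0\in(1,1+\sigma_0)$ from Lemma~\ref{lemhigh}, a good-$\lambda$ (or density) inequality: there exist constants $A,\gamma>0$ independent of $\delta$ and a continuous function $\sigma_1(\delta)\to 0$ such that
\[
\bigl|\{\mathcal{M}(\phi(|Dv|)^{s_0}) > A\lambda\}\cap\{\mathcal{M}(\phi(|Dv_0|)^{s_0})\le\epsilon\lambda\}\bigr| \leq C\bigl(\epsilon^{\gamma}+\sigma_1(\delta)\bigr)\bigl|\{\mathcal{M}(\phi(|Dv|)^{s_0})>\lambda\}\bigr|
\]
for every $\lambda$ above a critical threshold. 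Choosing $\delta$ and $\epsilon$ small depending on $[w]_{A_s}$, multiplying by $w$ and invoking the $A_s$ reverse-H\"older inequality $w(E)\lesssim (|E|/|B|)^{\eta}w(B)$ converts this into a weighted good-$\lambda$ inequality; integrating against $\lambda^{s/s_0-1}\,d\lambda$ and absorbing the left-hand side gives
\[
\int_\Omega\phi(|Dv|)^s\,w\,dx\ \le\ C\int_\Omega\phi(|Dv_0|)^s\,w\,dx,
\]
with $C$ depending on $n,p,q,s,[w]_{A_s}$ and $\operatorname{diam}(\Omega)/R$.

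The main obstacle is the boundary comparison: one must verify that the Lipschitz-up-to-the-boundary estimate for the autonomous $\phi$-Laplace equation on a half-ball, known classically for $p$-growth through Lieberman's work, continues to hold under our weaker assumption $\phi\in C^1([0,\infty))\cap C^2((0,\infty))$ with $t\phi''(t)\approx\phi'(t)$. This is where the even-reflection trick is crucial, as it converts the boundary problem into an interior problem for an equation of the same structure, to which Lemma~\ref{lemHolder} applies. Controlling the Reifenberg error so that $\sigma(\delta),\sigma_1(\delta)\to 0$ uniformly in the Orlicz setting requires a careful use of the $\approx$-relations collected in Proposition~\ref{prop:approxProp}, but is otherwise routine once the half-space regularity is in hand.
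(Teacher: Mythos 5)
Your overall strategy --- interior Lipschitz bound from Lemma~\ref{lemHolder}, boundary comparison with an autonomous half-ball problem whose up-to-the-boundary regularity is obtained by even reflection, and a Vitali/good-$\lambda$ argument upgraded to $A_s$-weights via the reverse H\"older property --- is exactly the route the paper takes. The paper outsources the covering and good-$\lambda$ machinery to Mengesha--Phuc \cite{MP1} and its Orlicz extension \cite{BC1}, and supplies as the only genuinely new input the two boundary comparison lemmas (Lemmas~\ref{wcomlem1} and \ref{wcomlem2}), together with the reflection argument (via \cite{Mart81}) that reduces the half-ball Lipschitz bound to Lemma~\ref{lemHolder}. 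So the skeleton of your plan matches the paper's.

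The one step that would fail as written is your boundary comparison. You prescribe $h=v-v_0$ on $\partial B_r^+(y)\cap\Omega$ and $h=0$ on the flat face, and then test both equations against $v-v_0-h$. But $v$ solves its equation on $\Omega\cap B_r(y)$ while $h$ solves its equation on $B_r^+(y)$, and these domains differ by a sliver of width $\sim\delta r$; moreover $v-v_0$ does not vanish on the flat face $\{x_n=0\}$ (it vanishes only on $\partial\Omega$, which merely lies within distance $\sim\delta r$ of that face), so the mixed boundary datum you prescribe is not attained by any $W^{1,\phi}$ trace and $v-v_0-h$ is not an admissible test function for either equation. The paper's fix is a two-stage comparison: Lemma~\ref{wcomlem1} first replaces $v$ by $w$ solving the autonomous equation on $\Omega_5$ with datum $v-v_0$, so that $w$ vanishes on $\partial\Omega\cap B_5$ and extends by zero across $\partial\Omega$; Lemma~\ref{wcomlem2} then compares $w$ with the half-ball solution $w_0$ whose boundary datum is $\eta w$ for a cutoff $\eta$ vanishing on $\{x_n\le 0\}$. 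The resulting error terms, $\int_{\{x_n\le\delta\}}\phi(|Dw|)\,dx$ and $\int\phi(|D\eta|\,|w|)\,dx$, are controlled respectively by higher integrability of $\phi(|Dw|)$ (yielding the explicit rate $\delta^{\sigma/(1+\sigma)}$ that makes your $\sigma(\delta)\to0$ quantitative) and by a one-dimensional fundamental-theorem-of-calculus estimate across the thin strip using $w\equiv0$ outside $\Omega$. With this device in place of your single-step comparison, the rest of your outline goes through as in \cite{MP1}.
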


\begin{remark}
In Theorem \ref{thmAB}, $\delta$ is decreasing as a function of $[w]_{A_s}$, 
see \cite[Remark~2.2]{MP1}. Moreover, we can also see by analyzing the proof that the 
constant $c$ is increasing in $[w]_{A_s}$ and $\frac{\diam(\Omega)}R$ when the other 
is constant. Therefore, when $\Omega=B_r$, the constant $c$ is increasing in $[w]_{A_s}$, 
since $\frac{\diam(\Omega)}{R}=\frac{1}{2\delta}$.
\end{remark}

\begin{proof}[Sketch of the proof of Theorem~\ref{thmAB}]
For the $p$-Laplacian case, that is, $\phi(t)=t^p$, the weighted estimate has been proved in \cite{MP1}, see also \cite{BR1}, for the following equation:
$$
\div \left(|Dv|^{p-2}Dv\right)=\div \left(|F|^{p-2}F\right)\quad\text{in}\ \ \Omega\quad\text{with}\quad v=0 \ \ \text{on}\ \partial \Omega.
$$
Specifically, in \cite{MP1}, it has been shown that for the above equation, 
$$
\int_{\Omega}|Dv|^{ps}w\, dx \leq c \int_{\Omega}|F|^{ps}w\, dx
$$
for any $w\in A_s$ and any $F\in L^{ps}_w(\Omega,\Rn)$.
Moreover, it turns out that this result without a weight (i.e., $w\equiv 1$) is naturally extended \cite{BC1} to the equation involving a general function $\phi$ 
\begin{equation}\label{PhiLaplaceF}
\div \left(
\frac{\phi'(|Dv|)}{|Dv|}Dv\right)=\div \left(
\frac{\phi'(|F|)}{|F|}F\right)\quad\text{in}\ \ \Omega\quad\text{with}\quad v=0 \ \ \text{on}\ \partial \Omega.
\end{equation}
Therefore, proceeding as in \cite{MP1} with minor modification, one can prove that for the equation \eqref{PhiLaplaceF},
$$
\int_{\Omega}\phi(|Dv|)^{s}w\, dx \leq c \int_{\Omega}\phi(|F|)^{s}w\, dx
$$
for any $w\in A_s$ and any $F\in L^{\phi}(\Omega,\Rn)$ satisfying $\phi(|F|)\in L^s_w(\Omega)$.

In this theorem, we consider non-zero boundary data $v_0$. However, the gradient of $v_0$ can be handled in a similar way as for $F$ in the results mentioned above. Hence, by the same argument as in \cite{MP1}, replacing $t^p$ by $\phi(t)$ and changing boundary comparison estimates from \cite[Lemma 4.6]{MP1} to Lemmas~\ref{wcomlem1} and \ref{wcomlem2} below, we have the desired estimate. 
\end{proof}



For the rest of the paper, we suppose the assumptions of Theorem~ \ref{thmAB}. We consider 
our problem \eqref{weqvv0} on a local region near the boundary of $\Omega$. 
Define $\Omega_r(x):=\Omega\cap B_r(x)$, $B_r:=B_r(0)$, $\Omega_r=\Omega_r(0)$, 
$B^+_r:=B_r\cap\{x_n>0\}$ and $B^-_r:=B_r\cap\{x_n<0\}$. Then we consider 
our equation in the region $\Omega_{5r}$ satisfying that $5r<R$ and 
$$
B_{5r}^{+}\ \subset\ \Omega_{5r} \ \subset\ 
B_{5r}\cap \{x_n>-10\delta r\},
$$
Here, $\delta\in(0,1)$ and $R>0$ come from the $(\delta,R)$-Reifenberg flat condition of $\Omega$ and so $\delta$ has to be determined later and $R$ is given. 
Note that in view of the scaling invariance property of \eqref{weqvv0}, see for instance the proof of Lemma~\ref{lemCZ}, we may let $r=1$ and consider assumption \eqref{wcomlem1ass} below.

We first compare our equation \eqref{weqvv0} with an equation having zero boundary 
values on $\partial\Omega$ in a local region near the boundary. 

\begin{lemma}\label{wcomlem1}
For $v_0\in W^{1,\phi}(\Omega)$ let $v\in W_0^{1,\phi}(\Omega)$ be a weak solution to 
\eqref{weqvv0}. For any $\epsilon>0$ there exists small $\delta\in(0,1)$ depending on $n$, $p$, $q$ and $\epsilon$ such that if $\Omega$ is $(\delta,5)$-Reifenberg flat and 
\begin{equation}\label{measuredensity}
B_5^{+}\ \subset\ \Omega_5 \ \subset\ 
B_5\cap \{x_n>-10\delta\},
\end{equation}
\begin{equation}\label{wcomlem1ass}
\fint_{\Omega_5} \phi(|Dv|)\,dx \leq 1
\quad\text{and}\quad
\fint_{\Omega_5} \phi(|Dv_0|)\,dx\leq \delta,
\end{equation}
then for the weak solution $w\in W^{1,\phi}(\Omega_5)$ to
\begin{equation}\label{wweq}
\div \left(
\frac{\phi'(|Dw|)}{|Dw|}Dw\right)=0\quad\text{in}\ \ \Omega_{5}\quad\text{and}\quad w=v-v_0 \ \ \text{on}\ \partial\Omega_{5},
\end{equation}
we have 
\begin{equation}\label{westimate1}
\fint_{\Omega_5}\phi(|Dw|)\,dx\leq c\quad\text{and}\quad
\fint_{\Omega_5}\phi(|Dv-Dw|)\,dx\leq \epsilon.
\end{equation} 
Here, $c>0$ depends on $p$ and $q$, but is independent of $\epsilon$.
\end{lemma}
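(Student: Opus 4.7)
The plan is to realise $v$ and $w$ as competing Dirichlet minimisers on $\Omega_5$ and run the standard monotonicity/comparison argument, adapted to the generalised $\Phi$-function setting by means of Propositions~\ref{prop00}, \ref{prop0} and \ref{prop000}. The key trace observation is that $v-v_0-w\in W^{1,\phi}_0(\Omega_5)$: on $\Omega\cap\partial B_5$ this follows from the boundary condition in \eqref{wweq}, while on $\partial\Omega\cap\overline B_5$ the boundary condition $v=v_0$ from \eqref{weqvv0} forces $w=v-v_0=0$ there as well. The Reifenberg-flat hypothesis \eqref{measuredensity} enters only to ensure $|\Omega_5|\approx |B_5|$, so that normalised and absolute integrals over $\Omega_5$ are comparable up to universal constants.

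For the first estimate in \eqref{westimate1}, the minimality of $w$ against the admissible competitor $v-v_0$ and \adec{q} of $\phi$ give
\[
\fint_{\Omega_5}\phi(|Dw|)\,dx\le\fint_{\Omega_5}\phi(|Dv-Dv_0|)\,dx\lesssim\fint_{\Omega_5}\bigl[\phi(|Dv|)+\phi(|Dv_0|)\bigr]\,dx\le 1+\delta\le c.
\]
For the second estimate, set $A(z):=\phi'(|z|)z/|z|$ and subtract the weak forms of \eqref{weqvv0} and \eqref{wweq} tested against $v-v_0-w$ to obtain
\[
\int_{\Omega_5}\bigl[A(Dv)-A(Dw)\bigr]\cdot(Dv-Dw)\,dx=\int_{\Omega_5}\bigl[A(Dv)-A(Dw)\bigr]\cdot Dv_0\,dx.
\]
Proposition~\ref{prop000}(1) bounds the left-hand side below by $c\int_{\Omega_5}V\,dx$ with $V:=\tfrac{\phi'(|Dv|+|Dw|)}{|Dv|+|Dw|}|Dv-Dw|^2$, while Young's inequality with parameter $\kappa\in(0,1)$ (second line of Proposition~\ref{prop00}(5)) combined with $|A(z)|=\phi'(|z|)$ and $\phi^*(\phi'(t))\lesssim\phi(t)$ (Proposition~\ref{prop0}(4) and (2)) bounds the right-hand side by $\kappa\int_{\Omega_5}[\phi(|Dv|)+\phi(|Dw|)]\,dx+\kappa^{-(q-1)}\int_{\Omega_5}\phi(|Dv_0|)\,dx$.

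Proposition~\ref{prop000}(3) with a second free parameter $\lambda\in(0,1)$ next yields the pointwise inequality $\phi(|Dv-Dw|)\lesssim\lambda[\phi(|Dv|)+\phi(|Dw|)]+\lambda^{-1}V$. Integrating, substituting the two preceding bounds, and using the first estimate of \eqref{westimate1} together with $\fint\phi(|Dv_0|)\le\delta$, I obtain
\[
\fint_{\Omega_5}\phi(|Dv-Dw|)\,dx\lesssim\lambda+\lambda^{-1}\kappa+\lambda^{-1}\kappa^{-(q-1)}\delta.
\]
Choosing, in order, $\lambda\approx\epsilon$, then $\kappa\approx\epsilon^2$, and finally $\delta\approx\epsilon^{2q}$ (all depending only on $n,p,q,\epsilon$) forces the right-hand side below $\epsilon$ and completes the argument. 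The main obstacle is the accounting in the Young step: without homogeneous $p$-growth the interplay between $\phi'$ and $\phi^*$ must be tracked through the scaled Young inequality of Proposition~\ref{prop00}(5), and the resulting factor $\kappa^{-(q-1)}$ in front of $\int\phi(|Dv_0|)$ is precisely what dictates that the smallness threshold $\delta$ depend polynomially on $\epsilon$ with an exponent controlled by the upper growth rate $q$.
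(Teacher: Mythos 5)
Your proof is correct and follows essentially the same route as the paper: both test the weak forms of \eqref{weqvv0} and \eqref{wweq} against the same admissible function $w-v+v_0\in W^{1,\phi}_0(\Omega_5)$, use Proposition~\ref{prop000}(1) for the monotonicity lower bound on $\int V\,dx$, the scaled Young inequality of Proposition~\ref{prop00}(5) together with $\phi^*(\phi'(t))\le t\phi'(t)\approx\phi(t)$ on the $Dv_0$ term, and Proposition~\ref{prop000}(3) to pass from $V$ back to $\phi(|Dv-Dw|)$, before tuning $\kappa$, $\lambda$ and $\delta$. The one small divergence is the first estimate in \eqref{westimate1}: you obtain it from the variational minimality of $w$ against the competitor $v-v_0$ combined with \adec{q}, while the paper tests the weak formulation of \eqref{wweq} against $w-v+v_0$ and absorbs via $\phi(t)\le t\phi'(t)$ and Young; the two are interchangeable here, and your version is, if anything, slightly more streamlined. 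You also spell out the explicit polynomial dependence $\delta\approx\epsilon^{2q}$, which the paper leaves implicit in ``choosing sufficiently small $\kappa_1,\kappa_2,\delta$''.
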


\begin{proof}
Since $w-v+v_0\in W^{1,\phi}_0(\Omega_5)$, we have by \eqref{wweq} and \eqref{weqvv0} that
\begin{equation}\label{wlemcom1pf1}
\int_{\Omega_5} \frac{\phi'(|Dw|)}{|Dw|}Dw \cdot D(w-v+v_0)\,dx= 0 =
\int_{\Omega_5} \frac{\phi'(|Dv|)}{|Dv|}Dv \cdot D(w-v+v_0)\,dx.
\end{equation}
In view of $\phi(t)\le t\phi'(t)$ and Propositions~\ref{prop00}(5) and \ref{prop0}(4), 
the first equality above implies that
\begin{align*}
\fint_{\Omega_5}\phi(|Dw|)\,dx
\leq 
\fint_{\Omega_5} \phi'(|Dw|)|Dv-Dv_0|\,dx
\leq 
\fint_{\Omega_5} [\tfrac{1}{2}\phi(|Dw|)\, dx+c\phi(|Dv-Dv_0|) ]\,dx
\end{align*}
for some $c=c(p,q)>0$. By \eqref{wcomlem1ass}, we have the first estimate in \eqref{westimate1}.

We next prove the second estimate in \eqref{westimate1}. 
By Proposition~\ref{prop000}(1), \eqref{wlemcom1pf1} and Propositions~\ref{prop00}(5)
and \ref{prop0}(4) we have that for $\kappa_1\in(0,1)$,
\begin{align*}
\fint_{\Omega_5} \frac{\phi'(|Dw|+|Dv|)}{|Dw|+|Dv|}|Dw-Dv|^2\,dx&\lesssim
\fint_{\Omega_5} \left(\frac{\phi'(|Dw|)}{|Dw|}Dw-\frac{\phi'(|Dv|)}{|Dv|}Dv\right) \cdot D(w-v)\,dx\\
&\hspace{-1cm}\lesssim\fint_{\Omega_5} (\phi'(|Dw|)+\phi'(|Dv|)) | Dv_0|\,dx\\
&\hspace{-1cm}\lesssim \kappa_1\fint_{\Omega_5} [\phi(|Dw|)+\phi(|Dv|)] \,dx+ \frac{1}{\kappa_1^{q-1}}\fint_{\Omega_5} \phi(|Dv_0|)\,dx.
\end{align*}
Moreover, by Proposition \ref{prop000}(3), for any $\kappa_2\in(0,1)$,
\[
 \phi(|Dw-Dv|)\leq \kappa_2(\phi(Dw)+\phi(Dv))+\kappa_2^{-1} \frac{\phi'(|Dw|+|Dv|)}{|Dw|+|Dv|}|Dw-Dv|^2.
\]
Combining the above two estimates we have
\begin{align*}
\fint_{\Omega_5} \phi(|Dw-Dv|)\,dx 
&\lesssim \kappa_2\fint_{\Omega_5}[\phi(Dw)+\phi(Dv)]\,dx+ \frac{\kappa_1}{\kappa_2}\fint_{\Omega_5} \phi(|Dw|)+\phi(|Dv|) \,dx\\
&\quad + \frac{1}{\kappa_2\kappa_1^{q-1}}\fint_{\Omega_5} \phi(|Dv_0|)\,dx.
\end{align*}
Finally applying \eqref{wcomlem1ass} and the first estimate in \eqref{westimate1} and choosing sufficiently small numbers $\kappa_1$, $\kappa_2$ and $\delta$ depending $n$, $p$, $q$ on $\epsilon$, we have the second estimate in \eqref{westimate1}.
\end{proof}

We also notice that the weak solution $w$ to \eqref{wweq} has value zero on 
$\partial\Omega_5\cap B_5$. 
We next compare \eqref{wweq}, which assumes zero boundary values on $\partial\Omega_5\cap B_5$, with an equation defined in $B_2^+$ with zero boundary 
values on $B_{2}\cap\{x_n=0\}.$ A similar result can be found in \cite[Lemma 3.6]{BC1}. The proof of that lemma employs a compactness argument. Here we give a more direct approach which clearly shows the dependence on $\delta$. 
 
\begin{lemma}\label{wcomlem2}
Let $\eta=\eta(x_n) \in C^\infty(\R)$ with $\eta=0$ if $x_n\leq 0$, $\eta=1$ 
if $x_n\geq \delta $ and $|\eta'|\leq \frac 2\delta$. For any $\epsilon>0$ there exists a 
small $\delta>0$ depending on $n$, $p$, $q$ and $\epsilon$, such that, under the assumptions of the above lemma, if $w_0$ is the weak solution to
\[
\div \left(
\frac{\phi'(|Dw_0|)}{|Dw_0|}Dw_0\right)=0\quad\text{in}\ \ B^+_{2}\quad\text{and}\quad w_0=\eta w\ \ \text{on}\ \partial B_{2}^+,
\]
then 
\begin{equation}\label{westimate2}
\fint_{B_2^+}\phi(|Dw_0|)\,dx\leq c\quad\text{and}\quad
\fint_{B_2^+}\phi(|Dw-Dw_0|)\,dx\leq \epsilon.
\end{equation} 
Moreover, 
\begin{equation}\label{bdLip}
\|\phi(|Dw_0|)\|_{L^\infty(\Omega_1)}=\|\phi(|Dw_0|)\|_{L^\infty(B_1^+)}\leq c \fint_{B^+_2}\phi(|Dw_0|)\, dx \leq c,
\end{equation}
where we extend $w_0$ by zero to $B^-_2$. Here constants $c$ depend on $n$, $p$ and $q$, but are independent of $\epsilon$.
\end{lemma}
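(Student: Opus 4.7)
The proof rests on three ingredients: a minimality comparison for the first half of \eqref{westimate2}, a monotonicity-based comparison for the second, and a flat-boundary $C^{1,\alpha}$-estimate for \eqref{bdLip}. For the first I would plug the competitor $\eta w$ (which agrees with $w_0$ on $\partial B_2^+$) into the minimality of $w_0$ and use \adec{q} of $\phi$:
\[
\fint_{B_2^+}\phi(|Dw_0|)\,dx
\leq \fint_{B_2^+}\phi(|D(\eta w)|)\,dx
\lesssim \fint_{B_2^+}\phi(|Dw|)\,dx + \fint_{B_2^+\cap\{0\leq x_n\leq \delta\}}\!\phi(|w|/\delta)\,dx.
\]
The first summand is bounded by \eqref{westimate1}. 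For the second, $(\delta,5)$-Reifenberg flatness together with $B_5^+\subset\Omega_5$ forces $\partial\Omega\cap B_5\subset\{x_n\leq 0\}$, so the extension of $w$ by zero to $B_5$ lies in $W^{1,\phi}(B_5)$ and vanishes for $x_n\leq -10\delta$. The line-integral bound
\[
\frac{|w(x',x_n)|}{11\delta}\leq \fint_{-10\delta}^{\delta}|Dw(x',t)|\,dt \quad\text{for } x_n\in[0,\delta],
\]
together with Jensen's inequality and Fubini, yields the Hardy-type estimate $\int_{B_2^+\cap\{0\leq x_n\leq \delta\}}\phi(|w|/(11\delta))\,dx \lesssim \int_{\Omega_5\cap\{x_n\leq \delta\}}\phi(|Dw|)\,dx$, which is bounded by \eqref{westimate1}.

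For the comparison estimate in \eqref{westimate2}, I would employ the test function $\psi:=\eta w - w_0$, extended by zero outside $B_2^+$. A short check shows $\psi\in W^{1,\phi}_0(\Omega_5)$: on $\partial B_2\cap\{x_n>0\}$ it vanishes because $w_0=\eta w$, on $B_2\cap\{x_n=0\}$ because $\eta(0)=0$ and $w_0=0$, and on $\partial\Omega\cap B_5\subset\{x_n\leq 0\}$ because $\eta=0$ there. Inserting $\psi$ into the weak forms of \eqref{wweq} and the $w_0$-equation and subtracting gives, with $A(\xi):=\phi'(|\xi|)\xi/|\xi|$,
\[
\int_{B_2^+}\bigl(A(Dw)-A(Dw_0)\bigr)\cdot D(w-w_0)\,dx
=\int_{B_2^+}\bigl(A(Dw)-A(Dw_0)\bigr)\cdot D((1-\eta)w)\,dx.
\]
Proposition~\ref{prop000}(1) bounds the left-hand side below by $\int_{B_2^+}\tfrac{\phi'(|Dw|+|Dw_0|)}{|Dw|+|Dw_0|}|Dw-Dw_0|^2\,dx$, while the right-hand side is supported in $S:=B_2^+\cap\{0\leq x_n\leq \delta\}$ and, by Young's inequality with parameter (Proposition~\ref{prop00}(5)), is controlled by a small fraction of $\int_{B_2^+}[\phi(|Dw|)+\phi(|Dw_0|)]\,dx$ plus $c_\kappa \int_S[\phi(|Dw|)+\phi(|Dw_0|)+\phi(|w|/\delta)]\,dx$. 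Combining with Proposition~\ref{prop000}(3) to pass from $\phi''$-weighted bounds to $\phi$-bounds on $Dw-Dw_0$, the task reduces to making the three strip integrals uniformly small in $\delta$.

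This last reduction is the main obstacle. I would resolve it by invoking boundary higher integrability of Gehring type. Since $w$, extended by zero, vanishes on $\partial\Omega\cap B_5$, a boundary Caccioppoli--Gehring argument (the boundary version of Lemma~\ref{lemhigh}) gives $\phi(|Dw|)\in L^{1+\sigma_0}$ near this portion of the boundary with universal bound. For $w_0$, the odd reflection $\tilde w_0(x',x_n):=-w_0(x',-x_n)$ extends to a weak solution of $\div A(D\tilde w_0)=0$ in $B_2$ (the equation is preserved under odd reflection because $A$ is odd and positively $1$-homogeneous in $\xi$), so interior higher integrability applies. H\"older's inequality then yields $\int_S \phi(|Dw|)\,dx+\int_S\phi(|Dw_0|)\,dx\lesssim \delta^{\sigma_0/(1+\sigma_0)}$, and the Hardy bound of the previous paragraph gives the same control for $\int_S\phi(|w|/\delta)\,dx$. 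Choosing $\delta$ small enough then establishes \eqref{westimate2}.

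Finally, for \eqref{bdLip} the odd reflection $\tilde w_0$ is a weak solution in $B_2$, so the interior estimate \eqref{Lip} of Lemma~\ref{lemHolder} applies: $\sup_{B_1}|D\tilde w_0|\lesssim \fint_{B_2}|D\tilde w_0|\,dx$. Using \adec{q} and Jensen's inequality with the concave $\phi^{1/q}$ (as in the proof of Lemma~\ref{reverse}) to pass to $\phi$-averages, together with $\fint_{B_2}\phi(|D\tilde w_0|)\,dx\lesssim\fint_{B_2^+}\phi(|Dw_0|)\,dx$, one obtains $\sup_{B_1^+}\phi(|Dw_0|)\lesssim \fint_{B_2^+}\phi(|Dw_0|)\,dx$, and the first half of \eqref{westimate2} controls the latter average.
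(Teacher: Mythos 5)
Your proof is correct and follows essentially the same strategy as the paper: minimality for the energy bound, the weak-form comparison with test function $\eta w - w_0$, a Hardy-type estimate on the strip, Gehring-type higher integrability to make strip integrals small, and reflection plus the interior estimate of Lemma~\ref{lemHolder} for \eqref{bdLip}. The one structural difference is in how you control $\int_{S}\phi(|Dw_0|)\,dx$ over the strip $S=B_2^+\cap\{0\le x_n\le\delta\}$: you invoke interior higher integrability of $w_0$ (via reflection to $B_2$), so that all three strip integrals go to zero with $\delta$. The paper instead keeps the factor $\kappa_1$ in front of $\int_S[\phi(|Dw_0|)+\phi(|Dw|)]\,dx$, bounds it crudely by the \emph{global} integral (which is universally bounded by the first half of \eqref{westimate2}), and chooses $\kappa_1,\kappa_2$ small first, then $\delta$ small; thus the paper only needs higher integrability of $w$, not $w_0$. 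Both routes work; yours is cleaner conceptually, the paper's avoids one Gehring argument. Two small remarks: the justification you give for the reflection principle (``$A$ is positively $1$-homogeneous'') is not accurate for general $\phi$ --- e.g. for $\phi(t)=t^p$ the map $A$ is $(p-1)$-homogeneous --- but the reflection principle is true regardless and one can cite \cite{Mart81} as the paper does; also, you correctly use \emph{odd} reflection for the zero Dirichlet condition on $\{x_n=0\}$, which is the right choice (the paper's text says ``even extension'' but the intended construction has $|D\tilde w_0|=|Dw_0|$ either way, so the estimates are unaffected).
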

\begin{proof}
We follow the technique in \cite[Lemma 2.5]{KR1}, see also \cite[Lemma 2.5]{BOY1}. Clearly, 
$(\delta,R)$-Reifenberg flat domains with $\delta\in(0,\frac12)$ satisfy the measure density condition $|B_r|4^{-n}\leq |\Omega_r(x)|\leq |B_r|$ and $4^{-n}|B_r|\leq |B_r(x)\setminus\Omega_r(x)|$ for all $x\in\partial \Omega$ and $r\in(0,R]$. 
One can show as in \cite[Theorem 3.9]{Ok0} that, for equation \eqref{wweq}, 
there exists $\sigma=\sigma(n,p,q)\in(0,1)$ such that $\phi(|Dw|)\in L^{1+\sigma}_{loc}(B_5)$ (we extend $w$ by $0$ in $B_5\setminus\Omega_5$) and 
$$
\left(\fint_{\Omega_3}\phi(|Dw|)^{1+\sigma}\,dx\right)^{\frac{1}{1+\sigma}}
\lesssim \fint_{\Omega_4}\phi(|Dw|)\,dx.
$$
Then by H\"older's inequality with \eqref{measuredensity}, we observe that
\begin{equation}\label{wlemcom2pf1}\int_{\Omega_3\cap\{x_n\leq \delta\}}\phi(|Dw|)\, dx \lesssim \delta^{\frac{\sigma}{1+\sigma}} \left(\int_{\Omega_3}\phi(|Dw|)^{1+\sigma}\, dx\right)^{\frac{1}{1+\sigma}}\lesssim \delta^{\frac{\sigma}{1+\sigma}}\int_{\Omega_4}\phi(|Dw|)\, dx.
\end{equation}
In addition, using the fact that $w\equiv 0$ in $B_4\setminus \Omega_4$ and 
$w$ is absolutely continuous on almost all lines parallel to the co-ordinate axes, 
as well as Jensen's inequality, we find that  
\begin{equation}\label{wlemcom2pf2}
\begin{aligned}
\int_{\Omega_2\cap\{x_n\leq \delta\}}\phi(|D\eta||w|)\, dx 
&\lesssim \int_{\Omega_2\cap\{x_n\leq \delta\}}\phi\left(\frac{1}{\delta}\left|\int^{x_n}_{-8\delta}D_nw(x',y)\,dy\right|\right)\, dx\\
&\lesssim \int_{\Omega_2\cap\{x_n\leq \delta\}}\phi\left(\fint^{\delta}_{-8\delta}|D_nw(x',y)|\,dy\right)\, dx\\
&\lesssim \int_{\{|x'|\leq 2\}\times\{-10\delta<x_n\leq \delta\}}\fint^{\delta}_{-8\delta} \phi\left(|D_nw(x',y)|\right)\, dy\, dx'dx_n\\
&\lesssim \int_{\Omega_3\cap\{(x',y):y\leq \delta\}} \phi\left(|Dw(x',y)|\right)\, dx'dy.
\end{aligned}\end{equation}
In the last inequality above, we used the facts that $\{|x'|\leq 2\}\cap \{|x_n|\leq 2\}\subset B_3$ and $Dw\equiv0$ in $B_3\setminus \Omega_3$.

Since $w_0-\eta w\in W^{1,\phi}_0(B_2^+)$ and $w_0$ is a minimizer, 
using also \eqref{wlemcom2pf2}, we have that
\begin{equation}\label{wlemcom2pf3}\begin{aligned}
\int_{B_2^+}\phi(|Dw_0|)\, dx &\le \int_{B_2^+} \phi(|D(\eta w)|)\,dx \\
&\lesssim \int_{B_2^+} \phi(|Dw|)\,dx+ \int_{B_2^+\cap\{x_n\leq \delta\}} \phi(|D\eta| |w|)\,dx \\
&\lesssim \int_{\Omega_3} \phi(|Dw|)\,dx,
\end{aligned}\end{equation}
which together with \eqref{measuredensity} and \eqref{westimate1} yields the first estimate in \eqref{westimate2}.

We next derive the second estimate in \eqref{westimate2}. Since $w_0-\eta w\in W^{1,\phi}_0(B_2^+)\cap W^{1,\phi}_0(\Omega_2)$ we have 
$$
\int_{B^+_2} \left(\frac{\phi'(|Dw_0|)}{|Dw_0|}Dw_0-
\frac{\phi'(|Dw|)}{|Dw|}Dw\right) \cdot D(w_0-\eta w)\, dx =0
$$
which together with Propositions~\ref{prop00}(5) and \ref{prop0}(4) 
implies that for any $\kappa_1\in(0,1)$
\begin{align*}
&\int_{B^+_2} \left(\frac{\phi'(|Dw_0|)}{|Dw_0|}Dw_0-
\frac{\phi'(|Dw|)}{|Dw|}Dw\right) \cdot D(w_0-w)\, dx\\
& = \int_{B^+_2\cap\{x_n\leq \delta \}} \left(\frac{\phi'(|Dw_0|)}{|Dw_0|}Dw_0- \frac{\phi'(|Dw|)}{|Dw|}Dw\right) \cdot D(\eta w- w)\, dx\\
& \leq \int_{B^+_2\cap\{x_n\leq \delta \}} \left(\phi'(|Dw_0|)+ \phi'(|Dw|)\right) \left(|D\eta| |w|+ |Dw|\right)\, dx\\
& \lesssim \kappa_1 \int_{B^+_2\cap\{x_n\leq \delta \}} \phi(|Dw_0|)+ \phi(|Dw|) \, dx+ 
\frac{1}{\kappa_1^{q-1}}\int_{B^+_2\cap\{x_n\leq \delta \}} \phi(|D\eta| |w|)+ \phi(|Dw|)\, dx.
\end{align*}
Applying Proposition~\ref{prop000}(3) and \eqref{wlemcom2pf1}--\eqref{wlemcom2pf3}, we see that for any $\kappa_2\in(0,1)$,
$$
\int_{B^+_2}\phi(|Dw_0-Dw|)\, dx \lesssim \left(\kappa_2+\frac{\kappa_1}{\kappa_2}\right) \int_{\Omega_3}\phi(|Dw|)\, dx+ \frac{\delta^{\frac{\sigma}{1+\sigma}} }{\kappa_2\kappa_1^{q-1}} \int_{\Omega_4}\phi(|Dw|)\, dx.
$$
Therefore, using the first estimate in \eqref{westimate1} and taking sufficiently small 
$\kappa_1$, $\kappa_2$ and $\delta$ depending on $n$, $p$, $q$ and $\epsilon$, we have the second estimate in \eqref{westimate2}.

Let $\tilde w_0\in W^{1,\phi}(B_2)$ be an even extension of $w_0$ so that $\tilde w_0(x)=w_0(x)$ if $x\in B_2^+$ and $\tilde w_0(x_1,\dots,x_{n-1},x_n)=w_0(x_1,\dots,x_{n-1},-x_n)$ if 
$(x_1,\dots,x_n)\in B_2^-$. Note that $\tilde w_0$ is well defined since $w_0=0$ on $B_r\cap\{x_n=0\}$. Moreover $\tilde w_0$ is a weak solution to
$$
\div \left(\frac{\phi'(|D\tilde w_0|)}{|D\tilde w_0|}D\tilde w_0\right)=0\quad\text{in}\ \ B_{2},
$$ 
see for instance \cite[Theorem 3.4]{Mart81}.
Therefore, \eqref{bdLip} follows from Lemma~\ref{lemHolder}.
\end{proof}


\section*{Acknowledgment}

J.\ Ok was supported by the National Research Foundation of Korea funded by the
Korean Government (NRF-2017R1C1B2010328) and P.\ H\"ast\"o was supported in part by the 
Magnus Ehrnrooth Foundation. We also thank the referee for useful comments.


\bibliographystyle{amsplain}

\end{document}